\theoremstyle{plain}
\newtheorem{theorem}{Theorem}[section]
\newtheorem{proposition}[theorem]{Proposition}
\newtheorem{lemma}[theorem]{Lemma}
\newtheorem{corollary}[theorem]{Corollary}
\theoremstyle{definition}
\newtheorem{definition}[theorem]{Definition}
\theoremstyle {remark}
\newtheorem{remark}[theorem]{Remark}
\newtheorem{notation}[theorem]{Notation}
\numberwithin{equation}{section}
\newlist{compactenum}{enumerate}
{3}
\setlist[compactenum,1]{label=(\arabic*),leftmargin=2em}
\setlist[compactenum,2]{label=(\Alph*),leftmargin=0em,itemindent=2.2em,itemsep=0.8em}
\newcommand{\ideal}[1]{{\left\langle#1\right\rangle}}
\newcommand{\onto}{\twoheadrightarrow}
\newcommand{\ol}{\overline}
\newcommand{\wh}{\widehat}
\newcommand{\bzero}{\mathbf 0}
\newcommand{\Con}{\mathcal C}
\newcommand{\CC}{\mathbb C}
\newcommand{\mm}{\mathfrak m}
\newcommand{\NN}{\mathbb N}
\newcommand{\ZZ}{\mathbb Z}
\newcommand{\singular}{{\sc Singular}}
\DeclareMathOperator{\Hom}{Hom}
\DeclareMathOperator{\End}{End}
\DeclareMathOperator{\Ass}{Ass}
\DeclareMathOperator{\Ann}{Ann}
\DeclareMathOperator{\height}{height}
\DeclareMathOperator{\depth}{depth}
\DeclareMathOperator{\grade}{grade}
\DeclareMathOperator{\id}{id}
\DeclareMathOperator{\Strata}{Strata}
\DeclareMathOperator{\Spec}{Spec}
\DeclareMathOperator{\Sing}{Sing}
\begin{document}
%%%%%%%%%%%%%%%%%%%%%%%%%%%%%%%%%%%%%%%%%%%%%%%%%%%%%%%%%%%%%%%%%%%%%%%%%%%%%%%%

\title[Analysis of normalization algorithms]{Local analysis of Grauert--Remmert-type normalization algorithms}

\author{Janko B\"ohm}
\address{Janko B\"ohm\\
Department of Mathematics\\
University of Kaiserslautern\\
Erwin-Schr\"odinger-Str.\\
67663 Kaiserslautern\\
Germany}
\email{boehm@mathematik.uni-kl.de}

\author{Wolfram Decker}
\address{Wolfram Decker\\
Department of Mathematics\\
University of Kaiserslautern\\
Erwin-Schr\"odinger-Str.\\
67663 Kaiserslautern\\
Germany}
\email{decker@mathematik.uni-kl.de}

\author{Mathias Schulze}
\address{Mathias Schulze\\
Department of Mathematics\\
University of Kaiserslautern\\
Erwin-Schr\"odinger-Str.\\
67663 Kaiserslautern\\
Germany}
\email{mschulze@mathematik.uni-kl.de}
\thanks{The research leading to these results has received funding from the People
Programme (Marie Curie Actions) of the European Union's Seventh Framework
Programme (FP7/2007-2013) under REA grant agreement n\textsuperscript{o} PCIG12-GA-2012-334355.}

\date{\today}

%%%%%%%%%%%%%%%%%%%%%%%%%%%%%%%%%%%%%%%%%%%%%%%%%%%%%%%%%%%%%%%%%%%%%%%%%%%%%%%%
\begin{abstract}
%%%%%%%%%%%%%%%%%%%%%%%%%%%%%%%%%%%%%%%%%%%%%%%%%%%%%%%%%%%%%%%%%%%%%%%%%%%%%%%%

Normalization is a fundamental ring-theoretic operation; geometrically it resolves singularities in codimension one. 
Existing algorithmic methods for computing the normalization rely on a common recipe: successively enlarge the given ring in form an endomorphism ring of a certain (fractional) ideal until the process becomes stationary. 
While Vasconcelos' method uses the dual Jacobian ideal, Grauert--Remmert-type algorithms rely on so-called test ideals.

For algebraic varieties, one can apply such normalization algorithms globally, locally, or formal analytically at all points of the variety. 
In this paper, we relate the number of iterations for global Grauert--Remmert-type normalization algorithms to that of its local descendants.

We complement our results by an explicit study of ADE singularities. This includes
the description of the normalization process in terms of value semigroups of curves.
It turns out that the intermediate steps produce only ADE singularities and simple 
space curve singularities from the list of Fr\"uhbis-Kr\"uger. 

%%%%%%%%%%%%%%%%%%%%%%%%%%%%%%%%%%%%%%%%%%%%%%%%%%%%%%%%%%%%%%%%%%%%%%%%%%%%%%%%
\end{abstract}
%%%%%%%%%%%%%%%%%%%%%%%%%%%%%%%%%%%%%%%%%%%%%%%%%%%%%%%%%%%%%%%%%%%%%%%%%%%%%%%%

\keywords{Normalization, integral closure, Grauert--Remmert criterion, curve singularity, simple singularity}
\subjclass[2010]{13B22 (Primary) 13H10, 14H20, 13P10 (Secondary)}
% 13B22 Commutative algebra; Ring extensions and related topics; Integral closure of rings and ideals
% 13H10 Commutative algebra; Local rings and semilocal rings; Special types (Cohen-Macaulay, Gorenstein, Buchsbaum, etc.) 
% 14H20 Algebraic geometry; Curves; Singularities, local rings
% 13P10 Commutative algebra; Computational aspects and applications; Gröbner bases; other bases for ideals and modules

\maketitle

%%%%%%%%%%%%%%%%%%%%%%%%%%%%%%%%%%%%%%%%%%%%%%%%%%%%%%%%%%%%%%%%%%%%%%%%%%%%%%%%
\section*{Introduction\label{secIntro}}
%%%%%%%%%%%%%%%%%%%%%%%%%%%%%%%%%%%%%%%%%%%%%%%%%%%%%%%%%%%%%%%%%%%%%%%%%%%%%%%%

Normalization of rings is an important concept in commutative algebra, with applications in algebraic geometry and singularity theory. 
In fact, geometrically, normalization leads to a desingularization in codimension one.
In particular, in the case of curves, it is the same as desingularization.

Given a reduced Noetherian ring $A$ for which the normalization $\ol{A}$ is a finite $A$-module, 
an obvious approach for finding $\ol{A}$ is to successively enlarge $A$ in form of an endomorphism ring of a fractional ideal.
Specific instances of such algorithms require a recipe for choosing the fractional ideal, supported by a suitable normality criterion.
The latter must ensure that the endomorphism ring is strictly larger than $A$ exactly if $A$ is not normal.
For a reduced affine $K$-algebra $A$, where $K$ is a perfect field, two approaches of this type have proven to work:

Vasconcelos' algorithm \cite{Vas91} (see also \cite[\S6]{Vas98}) is based on a regularity criterion of Lipman (see \cite{Lip69}).
It first replaces $A$ by its $(S_2)$-ification $\End_A(\omega_A)$, which is computed via a Noether normalization, and then 
iteratively by $\End_A(J_A^\vee)$, where $J_A$ is the Jacobian ideal of $A$.

De Jong's algorithm~\cite{dJ98, DGPJ99} is based on the normality criterion of Grauert and Remmert \cite[Anhang, \S3.3, Satz 7]{GR71}, see \cite{GLS10} for a historical account.
Here, the idea is to choose a so-called \emph{test ideal} $J$ (for example,  the radical of the Jacobian ideal), and to replace
$A$ by $\End_A(J)$, repeating the procedure if necessary.
A variant of this algorithm with improved efficiency is described in \cite{GLS10}; we refer to this version as the \emph{GLS normalization algorithm}.
The performance of the GLS algorithm can be further enhanced by applying it to suitable strata of the singular locus and combining the individual results, see \cite{BDLPSS13}.
This technique is particularly useful for parallel computing.
The parallel GLS algorithm is the fastest normalization algorithm known to date.
An implementation is available in the computer algebra system \singular~\cite{Singular}.

\smallskip

For further progress in algorithmic normalization, a deeper theoretical understanding of endomorphism rings of fractional ideals will be useful.
The following natural questions arise:

How do properties of and relations between fractional ideals affect the associated endomorphism ring?
The aspect of products of ideals is briefly discussed in \cite[p.~275]{dJ98}.

More specifically, how does the choice of fractional ideal affect the number of iterations to reach the normalization?
In \cite{PV08}, an analysis of the complexity of general normalization algorithms is given in the graded case.
In \cite{BEGvB09}, examples of free discriminants of versal families are given where the normalization is obtained in one step.

How to measure and bound the progress of each iteration?
In \cite{GS12}, a minimal step size is related to quasihomogeneity for Gorenstein curves.

What kind of endomorphism rings occur in the process of the algorithms?
In the case of finite Coxeter arrangements and their discriminants, this question is studied in \cite{GMS12}.

In the curve case, how does the normalization algorithm relate to the resolution of singularities by blowups?

From a more practical point of view, which choice of fractional ideal leads to the best overall performance? 
Is it better to normalize by many small steps or by a few large steps?

\smallskip

In this paper, we focus on the GLS algorithm and study the number of iterations in terms of localization and completion of the given ring and with respect to the inclusion of test ideals.

In Section~\ref{sec1}, we introduce the objects and operations relevant to algorithmic normalization and notice that they are compatible with localization.
To obtain similar results for completion, we consider the class of excellent semilocal rings. For such rings, normalization is finite and commutes with completion.

Based on these preparations, in Section~\ref{sec2},  we show that the GLS algorithm behaves well with respect to localization and completion.
That is, these operations preserve the property of being a test ideal and commute with forming endomorphism rings.
By considering test ideals relative to a subset of the spectrum of $A$, we prepare the ground for the study of stratified normalization in the next section.

In Section~\ref{sec3}, we stratify the singular locus of $A$ and consider test ideals relative to the individual strata.
We show that the number of steps of the global algorithm (using the radical ideal of the singular locus)  
is at most the maximal number of steps among the strata.
We prove equality in the case where $A$ is equidimensional and satisfies Serre's condition $(S_2)$.
Being Cohen--Macaulay, local complete intersections or curves satisfy these conditions.
As a side result, in the equidimensional case, we show that the GLS algorithm preserves $(S_2)$ at each step.

In Section~\ref{sec4}, we prove formulas for the number of steps of the GLS algorithm for plane curve singularities 
of type ADE\footnote{Based on experiments using {\sc{Singular}}, these formulas are also given in \cite{M13}.}.
Even more,  we explicitly determine the singularity types that occur in the process,
describing  the respective value semigroups on our way.
Besides other instances of ADE singularities, we find simple space curve singularities from the list of Fr\"uhbis-Kr\"uger~\cite{Kru99}. 

\smallskip
%%%%%%%%%%%%%%%%%%%%%%%%%%%%%%%%%%%%%%%%%%%%%%%%%%%%%%%%%%%%%%%%%%%%%%%%%%%%%%%%
\noindent\emph{Acknowledgements.}
%%%%%%%%%%%%%%%%%%%%%%%%%%%%%%%%%%%%%%%%%%%%%%%%%%%%%%%%%%%%%%%%%%%%%%%%%%%%%%%%
We thank Gert-Martin Greuel and Gerhard Pfister for helpful discussions.

%%%%%%%%%%%%%%%%%%%%%%%%%%%%%%%%%%%%%%%%%%%%%%%%%%%%%%%%%%%%%%%%%%%%%%%%%%%%%%%%
\section{Normalization and Completion}\label{sec1}
%%%%%%%%%%%%%%%%%%%%%%%%%%%%%%%%%%%%%%%%%%%%%%%%%%%%%%%%%%%%%%%%%%%%%%%%%%%%%%%%

In the following, all rings are commutative with one and $\NN$ includes $0$.

Let $A$ be a reduced Noetherian ring. We write $Q(A)$ for the total ring of
fractions of $A$, which is again a reduced Noetherian ring.

\begin{definition}
The \emph{normalization} of $A$, written $\ol{A}$, is the integral closure of $A$ in $Q(A)$. 
We call $A$ \emph{normalization-finite} if $\ol{A}$ is a finite $A$-module, and we call $A$ \emph{normal} if $A=\ol{A}$. 
We denote by
\[
N(A)=\{P\in\Spec(A)\mid A_P\text{ is not normal}\}
\]
the \emph{non-normal locus} of $A$, and by
\[
\Sing(A)=\{P\in\Spec(A)\mid A_P\text{ is not regular}\}
\]
the \emph{singular locus} of $A$.
\end{definition}

\begin{remark}\label{rem:one-dim}
Note that $N(A)\subset \Sing(A)$. Equality holds if $A$ is of pure dimension one. 
Indeed, a Noetherian local ring of dimension one is normal if and only if it is regular (see \cite[Thm.~4.4.9]{dJP00}).
\end{remark}

\begin{remark}\label{rem:r1-s2}
Recall that a ring is reduced if and only if it satisfies
Serre's conditions $(R_{0})$ and $(S_1)$; it is is normal if and only if
satisfies $(R_1)$ and $(S_2)$ (see \cite[\S4.5]{HS06}).
\end{remark}

\begin{definition}
The {\emph{conductor}} of $A$ in $\ol{A}$ is $\Con_A=\Ann_A(\ol{A}/A)$.
\end{definition}

\begin{remark}
Note that $\Con_A $ is the largest ideal of $A$ which is also an ideal of $\ol{A}$. 
In particular, $A$ is normal if and only if $\Con_A=A$.
\end{remark}

\begin{definition}
If $M$ is an $A$-module, we write $M^{-1}=\Hom_A(M,A)$ for
the \emph{dual module}, and call $M$ \emph{reflexive} if the canonical map
$M\rightarrow(M^{-1})^{-1}$ is an isomorphism.
\end{definition}

\begin{remark}\label{rem:char-conductor}
Any map $\varphi\in\ol{A}^{-1}=\Hom_A(\ol{A},A)$ is multiplication by $\varphi(1)\in A$. 
We may, thus, identify ${\ol{A}}^{-1}=\Con_A$ by taking $\varphi$ to $\varphi(1)$.

If $A$ is normalization-finite, satisfies $(S_2)$, and is Gorenstein in codimension $1$, then $\Con_A$ is reflexive (see \cite[Lem.~2.8]{GS11}).
\end{remark}

\begin{notation}
If $I$ is an ideal of a ring $R$, we write $V(I)= \{P\in\Spec(R)\mid P\supseteq I\}$ for the \emph{vanishing locus of $I$ in $\Spec(R)$}.
\end{notation}

\begin{proposition}\label{lem finite local} 
As above, let $A$ be a reduced Noetherian ring. 
Then:

\begin{enumerate}
\item\label{finite-local-1} $A$ is normalization-finite if and only if $\Con_A$ contains a non-zerodivisor.
\item\label{finite-local-2} $N(A)\subseteq V(\Con_A)$, with equality if ${A}$ is normalization-finite.
\item\label{finite-local-3} If $A$ is normalization-finite, then
$(\Con_A)_P=\Con_{A_P}$ for all $P\in\Spec(A)$.
\end{enumerate}
\end{proposition}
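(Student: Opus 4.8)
The plan is to prove the three parts in order, using the fact that $\ol{A}/A$ is finitely generated precisely when it is annihilated by a non-zerodivisor, together with the standard commutation of normalization with localization.

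\textbf{Part \eqref{finite-local-1}.} First I would recall that normalization commutes with localization at any multiplicative set, and in particular that $\ol{A}$ is a submodule of $Q(A)$. If $\Con_A$ contains a non-zerodivisor $f$, then $f\cdot\ol{A}\subseteq A$, so $\ol{A}\subseteq \frac{1}{f}A$, a finite $A$-module, hence $\ol{A}$ is finite since $A$ is Noetherian. Conversely, if $\ol{A}$ is a finite $A$-module, write $\ol{A}=A\omega_1+\cdots+A\omega_n$ with $\omega_i = a_i/s_i \in Q(A)$, $s_i$ a non-zerodivisor of $A$; then $s=s_1\cdots s_n$ is a non-zerodivisor with $s\ol{A}\subseteq A$, so $s\in\Ann_A(\ol{A}/A)=\Con_A$. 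This is routine; the only point to be careful about is that elements of $Q(A)$ have the form $a/s$ with $s$ a non-zerodivisor, which holds by definition of the total ring of fractions.

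\textbf{Part \eqref{finite-local-2}.} The inclusion $N(A)\subseteq V(\Con_A)$ is essentially the contrapositive: if $P\not\supseteq\Con_A$, pick $f\in\Con_A\setminus P$; then $f$ becomes a unit in $A_P$ and $f\ol{A}\subseteq A$ gives $\ol{A_P}=(\ol{A})_P\subseteq A_P$ — here I use that normalization commutes with localization — so $A_P$ is normal, i.e.\ $P\notin N(A)$. For the reverse inclusion under the normalization-finiteness hypothesis, suppose $P\supseteq\Con_A$ but $A_P$ is normal, so $(\ol{A})_P=A_P$. Since $\ol{A}$ is a finite $A$-module, $(\ol{A}/A)_P=0$ implies there is $g\in A\setminus P$ with $g(\ol{A}/A)=0$, i.e.\ $g\in\Con_A\subseteq P$, a contradiction. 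So $P\in N(A)$.

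\textbf{Part \eqref{finite-local-3}.} This is the key technical step and the main obstacle, since $\Ann$ does not commute with localization for arbitrary modules — one needs finite generation. Assuming $A$ is normalization-finite, $\ol{A}/A$ is a finitely generated $A$-module, and for finitely generated modules annihilators do commute with localization: $(\Ann_A M)_P = \Ann_{A_P}(M_P)$. Applying this with $M = \ol{A}/A$ and using once more that $(\ol{A})_P = \ol{A_P}$ (normalization commutes with localization), we get $(\Con_A)_P = (\Ann_A(\ol{A}/A))_P = \Ann_{A_P}((\ol{A}/A)_P) = \Ann_{A_P}(\ol{A_P}/A_P) = \Con_{A_P}$. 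I would include a one-line justification of the localization-of-annihilator fact (reduce to a finite generating set and clear denominators), and cite the standard commutation of integral closure with localization; with those two inputs the equality is immediate.
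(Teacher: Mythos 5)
Your proof is correct and follows the standard route: the paper itself only sketches part (1) via the common-denominator observation (your converse direction) and defers (2) and (3) to the literature, and your arguments are precisely the standard ones — localizing $f\ol{A}\subseteq A$, finite generation of $\ol{A}/A$ to lift $(\ol{A}/A)_P=0$, and commutation of annihilators of finite modules with localization — combined with the identification $(\ol{A})_P=\ol{A_P}$, which is legitimate here since non-zerodivisors of the reduced Noetherian ring $A$ avoid all minimal primes and hence remain non-zerodivisors in $A_P$, so $Q(A)_P=Q(A_P)$.
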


\begin{proof}
\begin{compactenum}
\item Note that a common denominator of a finite set of generators for $\ol{A}$ over $A$ is a non-zerodivisor in $\Con_A$.
\item See \cite[Lem.~3.6.3]{GP08}. 
\item See \cite[Ch.~V, \S5]{ZS75}.  
\end{compactenum}
\end{proof}

By part \eqref{finite-local-1} of the preceding proposition, if $A$ is normalization-finite, then
$\ol{A}$ is a fractional ideal in the following sense:

\begin{definition}
A \emph{fractional ideal} of $A$ is a finite $A$-submodule of $Q(A)$ containing a non-zerodivisor of $A$.
\end{definition}

\begin{lemma}\label{lem hom fractional ideal}
Let $M$ and $N$ be fractional ideals of $A$.
Then, independently of the choice of a non-zerodivisor $g\in M$ of $A$, we may identify $\Hom_A(M,N)$ with a fractional ideal of $A$ by means of
\[
\Hom_A(M,N)\hookrightarrow Q(A),\quad\varphi\mapsto\varphi(g)/g.
\]
In particular, any $\varphi\in\Hom_A(M,N)$ is multiplication by $\varphi(g)/g$.
\end{lemma}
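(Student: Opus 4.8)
The plan is to make the asserted embedding explicit and then to verify that its image meets the two conditions in the definition of a fractional ideal. Fix once and for all a non-zerodivisor $g\in M$ of $A$; such a $g$ exists by the definition of a fractional ideal, and since it is a non-zerodivisor \emph{of $A$} it lies in $A\cap M$. In particular $g$ is a unit in $Q(A)$, and $gm\in M$ for every $m\in M$. The crux is the identity
\[
g\cdot\varphi(m)=m\cdot\varphi(g)\quad\text{in }Q(A),
\]
valid for every $\varphi\in\Hom_A(M,N)$ and every $m\in M$. Granting it, dividing by $g$ shows that $\varphi$ is multiplication by the single element $\varphi(g)/g\in Q(A)$ — the last assertion of the lemma — and taking $m$ to be a second non-zerodivisor $g'\in M$ gives $\varphi(g)/g=\varphi(g')/g'$, so that the assignment $\varphi\mapsto\varphi(g)/g$ does not depend on the choice of $g$.

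To prove the identity I would exploit that $m\in Q(A)$ by writing $m=x/y$ with $x\in A$ and $y$ a non-zerodivisor of $A$, so that $ym=x$. Then $ygm=xg$, an equality of elements of $M$, and the $A$-linearity of $\varphi$ gives
\[
yg\cdot\varphi(m)=\varphi(ygm)=\varphi(xg)=x\cdot\varphi(g);
\]
cancelling the non-zerodivisor $y$ in $Q(A)$ now yields $g\cdot\varphi(m)=(x/y)\cdot\varphi(g)=m\cdot\varphi(g)$. The resulting map $\Hom_A(M,N)\to Q(A)$, $\varphi\mapsto\varphi(g)/g$, is visibly additive and $A$-linear, and it is injective since $\varphi(g)=0$ forces $\varphi(m)=(\varphi(g)/g)\,m=0$ for all $m\in M$.

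It remains to see that the image is a fractional ideal. It is a finite $A$-module: as $M$ is finitely generated, a surjection $A^{k}\onto M$ induces an injection $\Hom_A(M,N)\hookrightarrow\Hom_A(A^{k},N)\cong N^{k}$, and $N^{k}$ is finite over the Noetherian ring $A$. It contains a non-zerodivisor of $A$: since $M$ is a finite $A$-submodule of $Q(A)$ there is a common denominator, i.e.\ a non-zerodivisor $e\in A$ with $eM\subseteq A$; choosing in addition a non-zerodivisor $h\in A\cap N$, multiplication by $he$ maps $M$ into $hA\subseteq N$ and therefore defines an element of $\Hom_A(M,N)$ whose image in $Q(A)$ is the non-zerodivisor $he$.

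The only genuinely delicate point is the key identity: one must resist treating $m$ as though it lay in $A$, and it is precisely the detour through the presentation $m=x/y$ and the cancellation of $y$ in $Q(A)$ that makes the $A$-linearity of $\varphi$ applicable. Everything else is routine bookkeeping with fractional ideals over a Noetherian ring.
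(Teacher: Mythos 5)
Your proof is correct: the key identity $g\varphi(m)=m\varphi(g)$, obtained by clearing denominators and using $A$-linearity, is exactly the standard argument, and the verification that the image is finite and contains a non-zerodivisor is also carried out properly. The paper itself gives no argument but simply cites \cite[Lem.~3.1]{GLS10}, whose proof runs along essentially the same lines as yours, so you have merely supplied in full the details the paper delegates to the reference.
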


\begin{proof}
See, for example, \cite[Lem.~3.1]{GLS10}.
\end{proof}

Our applications will be geometric in nature: 
The rings under discussion will be finite $K$-algebras, where $K$ is a field, localizations of such algebras, the semi-local rings appearing in the normalization process of a localization, or the rings obtained by completing one of the semi-local rings. 
As we recall in what follows, all these rings are normalization-finite.

\begin{notation}
The Jacobson radical of a ring $R$ will be denoted by $\mm_R$.
\end{notation}

\begin{theorem}\label{thm:prop-completion}
Let $R$ be a Noetherian ring, and let $\wh\ $ be completion at an ideal $I$ of $R$. 
Then $\wh{R}$ is Noetherian, and we have:

\begin{enumerate}
\item\label{compl1} If $M$ is a finite $R$-module, then the natural map $M\otimes_R\wh{R}\rightarrow\wh{M}$ is an isomorphism, and $\wh{R}$ is flat over $R$.
\item\label{compl2} If $I$ is contained in $\mm_R$, then the natural map $R\rightarrow\wh{R}$ is an inclusion, and $\wh{R}$ is faithfully flat over $R$.
\end{enumerate}
\end{theorem}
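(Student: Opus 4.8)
This is a standard package of facts about completion of Noetherian rings, and the plan is simply to assemble it from the literature rather than reprove it from scratch. First I would recall that $\wh R$ is Noetherian: this is the classical Krull--Chevalley--Cohen theorem, proved by passing to the associated graded ring $\mathrm{gr}_I(R)$, which is Noetherian because $R/I$ is Noetherian and $I/I^2$ is a finite $R/I$-module, and then lifting the Noetherian property along the $I$-adic filtration (see e.g. \cite[Ch.~III, \S2-3]{AM69} or \cite[\S8]{Mat89}).

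For part~\eqref{compl1}, the key point is that for a \emph{finite} $R$-module $M$ one has a natural map $M\otimes_R\wh R\to\wh M$, and this is an isomorphism when $R$ is Noetherian. I would prove this by first checking it for $M=R$ and $M=R^n$ (where it is a tautology, completion commuting with finite direct sums), and then choosing a finite presentation $R^p\to R^q\to M\to 0$; applying the right-exact functors $(-)\otimes_R\wh R$ and (using Noetherianness, via the Artin--Rees lemma) the exact functor $\wh{\ \cdot\ }$ on finite modules, one gets a commutative diagram with exact rows and isomorphisms on the two left columns, so the five lemma gives the isomorphism for $M$. Flatness of $\wh R$ over $R$ then follows because $\wh{\ \cdot\ }$ is exact on the category of finite $R$-modules (again by Artin--Rees), hence $-\otimes_R\wh R$ is exact on finite modules, and a ring map is flat as soon as tensoring is exact on finitely generated modules. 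This is \cite[Thm.~8.7, Thm.~8.8]{Mat89}; I would just cite it.

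For part~\eqref{compl2}, assume $I\subseteq\mm_R$. Injectivity of $R\to\wh R$ is the statement that $\bigcap_{n\ge 0}I^n=0$, which is the Krull intersection theorem: the intersection $N=\bigcap_n I^n$ satisfies $IN=N$ by Artin--Rees, and since $I$ lies in the Jacobson radical, Nakayama's lemma forces $N=0$. Faithful flatness then follows from flatness (part~\eqref{compl1}) together with the criterion that a flat map $R\to S$ is faithfully flat iff $\mm S\ne S$ for every maximal ideal $\mm$ of $R$: here any maximal ideal contains $I$, so its extension is contained in the proper ideal $I\wh R\subseteq\mm_{\wh R}$. Equivalently one checks $M\otimes_R\wh R\ne 0$ for every nonzero finite $M$ directly from $\wh M\supseteq M$. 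Reference: \cite[Thm.~8.10, Thm.~8.14]{Mat89}.

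Since the whole statement is classical, there is no real obstacle; the only mild subtlety worth flagging is that part~\eqref{compl1} genuinely uses that $R$ is Noetherian (so that Artin--Rees applies and $\wh{\ \cdot\ }$ is exact on finite modules) — without it the comparison map $M\otimes_R\wh R\to\wh M$ need not be an isomorphism even for finite $M$. In the write-up I would state the theorem with a single pointer to \cite[\S8]{Mat89} (or \cite[Ch.~10-11]{AM69}) and not belabor the proof.
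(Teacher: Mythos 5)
Your proposal is correct and matches the paper's treatment: the paper likewise disposes of this standard package of completion facts by citation (to Bourbaki, with the remark that in case (2) the $I$-adic topology makes $R$ a Zariski ring, which is the same Krull-intersection/Jacobson-radical point you invoke). Your sketches of the underlying arguments (Artin--Rees, five lemma, Krull intersection, the faithful-flatness criterion) are all accurate, so a pointer to \cite[\S8]{Mat89} or the Bourbaki reference suffices.
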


\begin{proof}
See \cite[Ch.~III, \S3.4 Thm.~3, Props.~8, 9]{Bou98}.
Note that in case~\eqref{compl2}, $R$ together with the $I$-adic topology is a Zariski ring.
\end{proof}

\begin{remark}\label{rem:semi-local-compl}
If $R$ is a semi-local Noetherian ring, then $\wh\ $ will always stand for the completion at $\mm_R$.
In this case, $\wh{R}$ is again a semi-local Noetherian ring, and if $R\subset R'$ is a finite ring extension, then $R'$ is semi-local and Noetherian as well, and the completion of any $R'$-module $M'$ at $\mm_{R'}$ coincides with that at $\mm_R$ if $M'$ is regarded as an $R$-module. 
See \cite[Ch.~III, \S3.1, Cor.; Ch.~IV, \S2.5, Cor.~3]{Bou98} for details.
\end{remark}

Grothendiecks's notion of an \emph{excellent ring} provides the general framework for the rings considered here. 
Referring to \cite{Mat80} for the defining properties of an excellent ring (which include that the ring is Noetherian), we recall a number of consequences of these properties.

\begin{theorem}[Grothendieck]\label{thm:Grothendieck}
Let $R$ be an excellent ring. 
Then all localizations of $R$ and all finitely generated $R$-algebras are excellent.
\end{theorem}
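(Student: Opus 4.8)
The plan is to deduce the statement from the permanence of the four defining properties of excellence under localization and under passage to finitely generated algebras. Recall that $R$ is excellent if it is Noetherian, universally catenary, a G-ring (the formal fibres of all its localizations are geometrically regular), and satisfies the openness condition that the regular locus is open in $\Spec(S)$ for every finitely generated $R$-algebra $S$. Two of these four conditions are already phrased ``universally'', so they pass verbatim to every finitely generated $R$-algebra; and since a finitely generated algebra over a localization $R_P$ is a localization of a finitely generated $R$-algebra, they also pass to $R_P$. Noetherianity is preserved under the Hilbert basis theorem and under localization. Hence everything reduces to the G-ring property.

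For a localization $R_P$, the G-ring property is immediate: every localization of $R_P$ is a localization of $R$, so its formal fibres occur already among the formal fibres of localizations of $R$ and are therefore geometrically regular. The substantive point is that a finitely generated algebra $S$ over a G-ring $R$ is again a G-ring. Here I would first use the transitivity of regular (that is, flat with geometrically regular fibres) ring homomorphisms together with a presentation $S=R[x_1,\dots,x_n]/I$ to reduce to the two cases $S=R[x]$ and $S=R/I$. In each case one must check that, for a prime $Q$ of $S$ contracting to a prime $P$ of $R$, regularity of $R_P\to\wh{R_P}$ forces regularity of $S_Q\to\wh{S_Q}$; the idea is to exhibit a fibre of the second map as obtained from a fibre of the first by a combination of finitely generated field extension, localization, and base change, and then to invoke the stability of geometric regularity under these operations.

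The main obstacle is precisely this last reduction: controlling the formal fibres of a finitely generated algebra in terms of those of the base ring. This is the technical core of Grothendieck's theory and depends on the finer structure of regular morphisms and on the behaviour of geometric regularity under base change. Rather than reproducing that argument, I would simply record that the full proof is contained in the references on excellent rings, and refer the reader to \cite{Mat80} (the result being originally due to Grothendieck).
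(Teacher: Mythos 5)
Your proposal is consistent with the paper, whose entire proof is the citation ``See [Thms.~73, 77]{Mat80}''; you correctly reduce everything to the permanence of the G-ring property under passage to finitely generated algebras and then defer that technical core to the same reference. The extra outline (handling Noetherianity, universal catenarity, and the J-2 condition directly, and isolating the $R[x]$ case as the substantive step) is accurate and adds detail the paper omits, but the approach is essentially the same.
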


\begin{proof}
See \cite[Thms.~73, 77]{Mat80}.
\end{proof}

\begin{remark}\label{rem:geom-rings-are-exc}
Complete semi-local Noetherian rings are excellent (for a proof see \cite[(28.P); Thms.~68, 74]{Mat80}). 
In particular, any field $K$ and, hence, any localization of any finitely generated $K$-algebra are excellent.
\end{remark}

\begin{theorem}[Grothendieck]\label{thm die}
Let $A$ be a reduced excellent ring. Then:

\begin{enumerate}
\item\label{die1} The completion $\wh{A}$ of $A$ at any ideal of $A$ is reduced. 
If $A$ is normal, then $\wh{A}$ is normal.
\item\label{die2} $A$ is normalization-finite.
\item\label{die3} If $A$ is semi-local, then $\ol{\wh{A}}=\wh{\ol{A}}$. 
In particular,
\begin{enumerate}
\item\label{die4} $\ol{\wh{A}}=\wh{\ol{A}}=\ol{A}\otimes_A\wh{A}$ is a finite $\wh{A}$-module, and
\item\label{die5} if $A$ is complete, then $\ol{A}$ is complete.
\end{enumerate}
\end{enumerate}
\end{theorem}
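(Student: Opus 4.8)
The plan is to reduce everything to the non-semilocal, non-complete versions of these statements (which are standard) and then bootstrap to the semilocal case using faithfully flat descent along $A \to \wh A$.

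\textbf{Part \eqref{die1}.} The completion $\wh A$ of a reduced excellent ring $A$ is reduced: by Theorem~\ref{thm:Grothendieck} localizations of $A$ are excellent, and an excellent ring has geometrically regular formal fibers; reducedness is then inherited by $\wh A$ via the standard argument that if all formal fibers of $A$ are geometrically reduced (which follows, since they are geometrically regular) and $A$ is reduced, then $\wh A$ is reduced --- a reference such as \cite{Mat80} (the chapter on excellent rings, in particular the results around the $(R_i)$ and $(S_i)$ properties being preserved under completion for excellent rings) covers this. The same type of argument, applied to the properties $(R_1)$ and $(S_2)$ in place of $(R_0)$ and $(S_1)$ (see Remark~\ref{rem:r1-s2}), shows that if $A$ is normal then $\wh A$ is normal; here one uses that $\wh A$ is flat over $A$ (Theorem~\ref{thm:prop-completion}\eqref{compl1}), which transports $(S_i)$, together with geometrical regularity of the fibers, which transports $(R_i)$.

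\textbf{Part \eqref{die2}.} That a reduced excellent ring is normalization-finite is one of the defining consequences of excellence (excellence includes the Nagata/universally-Japanese property). I would simply cite \cite{Mat80}; combined with Theorem~\ref{thm:Grothendieck}, this gives finiteness of $\ol A$ over $A$ in all the geometric situations of interest. By Proposition~\ref{lem finite local}\eqref{finite-local-1}, $\ol A$ is then a fractional ideal of $A$.

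\textbf{Part \eqref{die3}.} Now assume $A$ reduced, excellent, and semilocal, with $\wh{\phantom{A}}$ the completion at $\mm_A$. The heart of the matter is the identity $\ol{\wh A} = \wh{\ol A}$. First, by \eqref{die2}, $A \hookrightarrow \ol A$ is a finite extension; by Remark~\ref{rem:semi-local-compl}, $\ol A$ is semilocal Noetherian and its $\mm_{\ol A}$-adic completion agrees with its $\mm_A$-adic completion, and by Theorem~\ref{thm:prop-completion}\eqref{compl1} this completion is $\wh{\ol A} = \ol A \otimes_A \wh A$. Since $\wh A$ is flat over $A$, tensoring the integral-dependence relations shows $\wh{\ol A}$ is integral over $\wh A$; and since $\ol A \hookrightarrow \ol A \otimes_A \wh A$ and $\ol A$ is a fractional ideal of $A$ containing a non-zerodivisor, one checks using flatness that $\wh{\ol A} \subseteq Q(\wh A)$, so $\wh{\ol A} \subseteq \ol{\wh A}$. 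For the reverse inclusion, $\wh{\ol A}$ is a finite $\wh A$-module (being $\ol A \otimes_A \wh A$ with $\ol A$ finite over $A$), it is integral over $\wh A$, and --- crucially --- it is normal: indeed $\ol A$ is normal, and since $\ol A$ is excellent (Theorem~\ref{thm:Grothendieck}), part~\eqref{die1} applied to $\ol A$ gives that $\wh{\ol A}$ is normal. A normal ring that is finite and birational... more precisely, a normal ring between $\wh A$ and its total quotient ring that is module-finite over $\wh A$ must equal $\ol{\wh A}$, because $\ol{\wh A}$ is the largest such. Hence $\ol{\wh A} = \wh{\ol A} = \ol A \otimes_A \wh A$, which is finite over $\wh A$; this is \eqref{die4}. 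Finally \eqref{die5} is immediate: if $A$ is complete then $\wh A = A$, so $\ol A = \ol{\wh A} = \wh{\ol A} = \ol A \otimes_A A = \ol A$ shows $\ol A$ is complete (equivalently, $\ol A$ is finite over the complete ring $A$, hence complete).

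\textbf{Main obstacle.} The delicate point is the reverse inclusion $\ol{\wh A} \subseteq \wh{\ol A}$ in \eqref{die3}: one must know that $\wh{\ol A}$ is actually normal, not merely integrally closed in some partial sense, and this genuinely requires excellence of $\ol A$ (so that \eqref{die1} applies to it) --- it is exactly the place where the hypothesis cannot be weakened to "$A$ Noetherian." A secondary subtlety is keeping careful track of total rings of fractions under the flat base change $A \to \wh A$, i.e. verifying $Q(\ol A) \otimes_A \wh A$ sits inside $Q(\wh A)$ compatibly; this uses that $\wh A$ is reduced (part~\eqref{die1}) so that non-zerodivisors behave well, together with Theorem~\ref{thm:prop-completion}\eqref{compl1}.
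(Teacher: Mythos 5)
Your proof is correct and follows essentially the same route as the paper: cite Matsumura for reducedness/normality of completions of excellent rings and for the Nagata property, and for \eqref{die3} use that $\ol{A}$ is excellent (being finite over $A$), so $\wh{\ol{A}}$ is normal, then sandwich $\wh{A}\subseteq\wh{\ol{A}}\subseteq Q(\wh{A})$ via flatness and the persistence of non-zerodivisors. The only step you elide is in \eqref{die2}: for a reduced ring with several minimal primes $P_1,\dots,P_s$, the Nagata property gives finiteness of each $\ol{A/P_i}$, and one must invoke the splitting of normalization $\ol{A}\cong\prod_i\ol{A/P_i}$ to conclude --- the paper makes this explicit.
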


\begin{proof}
For \eqref{die1}, see \cite[Thm.~79]{Mat80}.

For \eqref{die2}, note that excellent rings are Nagata rings (see \cite[Thm.~78]{Mat80}). 
Hence, if $P$ is any prime ideal of $A$, then $A/P$ is normalization-finite, so that $\ol{A/P} $ is in particular finite over $A$. 
The result follows from the \emph{splitting of normalization} (see \cite[Thm.~1.5.20]{dJP00}): 
If $P_1,\dots,P_s$ are the minimal primes of $R$, then
\[
\ol{A}\cong\prod_{i=1}^s\ol{A/P_i}.
\]

For \eqref{die3}, note that $\ol{A}$ is finite over $A$ by \eqref{die2} and, hence, an excellent ring by Theorem~\ref{thm:Grothendieck}. 
We conclude from \eqref{die1} that $\wh{\ol{A}}$ is normal. 
Now consider the inclusions $A\subseteq\ol{A} \subseteq Q(A)$ which by the flatness of completion give rise to the inclusions $\wh{A}\subseteq\wh{\ol{A}}\subseteq Q(A)\otimes_A\wh{A}$, where $\wh{\ol{A}}$ is finite over $\wh{A}$. 
Since every non-zerodivisor of $A$ is a non-zerodivisor of $\wh{A}$, we may regard $Q(A)\otimes_A\wh{A}$ as a subring of $Q(\wh{A})$. 
Since $\wh{\ol{A}}$ is normal, we conclude that $\ol{\wh{A}}=\wh{\ol{A}}$.
\end{proof}

\begin{corollary}\label{cor:local-complete-dom}
If $A$ is a Noetherian complete local domain, then $\ol{A}$ is a Noetherian complete local domain.
\end{corollary}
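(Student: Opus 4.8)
The plan is to apply Theorem~\ref{thm die} directly to $A$, combined with the elementary observation that the normalization of a domain is again a domain, plus the structure of complete semi-local rings.

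First I would record that $A$, being a domain, is reduced, and that it is excellent: a Noetherian complete local ring is excellent by Remark~\ref{rem:geom-rings-are-exc}. Hence Theorem~\ref{thm die} applies to $A$. Part~\eqref{die2} yields that $\ol{A}$ is a finite $A$-module; in particular $\ol{A}$ is Noetherian, and, being a finite ring extension of the semi-local Noetherian ring $A$, it is semi-local and Noetherian by Remark~\ref{rem:semi-local-compl}. Since $A$ is complete, part~\eqref{die5} shows that $\ol{A}$ is complete at $\mm_{\ol{A}}$. Finally, as $A$ is a domain, $Q(A)$ is a field, and $\ol{A}$ is a subring of $Q(A)$, hence a domain.

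It remains to pass from "semi-local" to "local". This is where completeness is essential: a complete Noetherian local ring is Henselian, so the finite $A$-algebra $\ol{A}$ decomposes as a finite product of (complete) local rings; equivalently, the complete semi-local ring $\ol{A}$ is the product of its completions at its maximal ideals. Since the domain $\ol{A}$ has no nontrivial idempotents, this product is trivial, so $\ol{A}$ is local. Therefore $\ol{A}$ is a Noetherian complete local domain, as claimed. I do not expect a genuine obstacle here; the only point needing a word of justification beyond what has already been established is precisely this last reduction to the local case, and it is a standard consequence of the structure theory of complete semi-local rings.
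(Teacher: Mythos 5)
Your proof is correct and follows essentially the same route as the paper: excellence of $A$ via Remark~\ref{rem:geom-rings-are-exc}, Theorem~\ref{thm die} for finiteness and completeness of $\ol{A}$, and the decomposition of a complete semi-local Noetherian ring as a product of local rings, which must be trivial since $\ol{A}$ is a domain. The only cosmetic difference is that you justify the product decomposition via Henselianness, whereas the paper cites Bourbaki directly.
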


\begin{proof} 
It is clear from Remark~\ref{rem:geom-rings-are-exc} that $A$ is excellent. 
Hence, by Theorem~\ref{thm die}, $A$ is normalization-finite and $\ol{A}$ is complete. 
Taking Remark~\ref{rem:semi-local-compl} into account, we conclude that $\ol{A}$ is a Noetherian complete semi-local ring and, thus, a product of local rings (see \cite[Ch.~III, \S2.13, Cor.]{Bou98}). 
Since $\ol{A}$ is also a domain, it must be a local ring.
\end{proof}

\begin{corollary}\label{cor radical and completion}
If $J$ is an ideal of an excellent ring $R$, and $\wh\ $ is completion at an arbitrary ideal of $R$, then
\[
\sqrt{\wh{J}}=\wh{\sqrt{J}}.
\]
In particular, if $J$ is radical, then $\wh{J}$ is radical as well.
\end{corollary}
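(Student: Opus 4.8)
The plan is to deduce this from the already-established fact that the completion of a reduced excellent ring is reduced (Theorem~\ref{thm die}\eqref{die1}). First I would pass to the quotient $R/\sqrt{J}$: this ring is reduced by construction, and since it is a quotient of $R$ — hence a finitely generated $R$-algebra — it is excellent by Theorem~\ref{thm:Grothendieck}. Therefore its completion at the image of the given ideal is again reduced by Theorem~\ref{thm die}\eqref{die1}.

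Next I would identify this completion with $\wh R/\wh{\sqrt J}$. Since $R$ is Noetherian, every ideal is a finite $R$-module, so Theorem~\ref{thm:prop-completion}\eqref{compl1} gives $\wh I = I\wh R$ for every ideal $I$ of $R$ (as submodules of $\wh R$), and the flatness of $\wh R$ over $R$ turns the exact sequence $0\to\sqrt J\to R\to R/\sqrt J\to 0$ into $0\to\wh{\sqrt J}\to\wh R\to\wh R/\wh{\sqrt J}\to 0$, identifying $\wh{R/\sqrt J}$ with $\wh R/\wh{\sqrt J}$. The reducedness of the latter says precisely that $\wh{\sqrt J}$ is a radical ideal of $\wh R$, i.e.\ $\sqrt{\wh{\sqrt J}}=\wh{\sqrt J}$.

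It then remains to show $\wh{\sqrt J}=\sqrt{\wh J}$. From $J\subseteq\sqrt J$ and extension of ideals we get $\wh J=J\wh R\subseteq\sqrt J\,\wh R=\wh{\sqrt J}$, hence $\sqrt{\wh J}\subseteq\sqrt{\wh{\sqrt J}}=\wh{\sqrt J}$. For the reverse inclusion I would use Noetherianity once more: $(\sqrt J)^n\subseteq J$ for some $n$, so $(\wh{\sqrt J})^n=(\sqrt J)^n\wh R\subseteq J\wh R=\wh J$, which forces $\wh{\sqrt J}\subseteq\sqrt{\wh J}$. Combining the two inclusions gives $\sqrt{\wh J}=\wh{\sqrt J}$, and the ``in particular'' is the special case $J=\sqrt J$.

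The only genuine subtlety — and the point at which I would be most careful — is that completion at an \emph{arbitrary} ideal need not be injective, so throughout one must argue with extended ideals $I\wh R$ rather than with contractions or intersections. Fortunately, the combination of $\wh I=I\wh R$ from Theorem~\ref{thm:prop-completion}\eqref{compl1} with flatness makes the compatibility of extension with quotients and with powers of ideals entirely routine in the Noetherian setting, so no real difficulty arises beyond keeping this bookkeeping straight.
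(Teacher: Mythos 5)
Your argument is correct and follows essentially the same route as the paper's proof: apply Theorem~\ref{thm die}.\eqref{die1} (via the reduced excellent quotient $R/\sqrt{J}$) to see that $\wh{\sqrt{J}}$ is radical, use flatness to get $\wh{J}\subseteq\wh{\sqrt{J}}$, and use $(\sqrt{J})^m\subseteq J$ for the reverse inclusion. You merely spell out the identification $\wh{R/\sqrt{J}}\cong\wh{R}/\wh{\sqrt{J}}$ and the excellence of the quotient, which the paper leaves implicit.
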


\begin{proof}
By the flatness of completion, we first deduce from $J\subseteq\sqrt{J}$ that $\wh{J} \subseteq\wh{\sqrt{J}}$, and second from Theorem~\ref{thm die}.\eqref{die1} that $\wh{\sqrt{J}}$ is radical. 
Thus, $\sqrt{\wh{J}}\subseteq\wh{\sqrt{J}}$. 
On the other hand, there is an $m$ with $\left(\sqrt{J}\right)^{m}\subseteq J$. 
Again by flatness, this implies that $\left({\wh{\sqrt{J}}}\right)^{m}\subseteq\wh{J}$, that is, $\wh{\sqrt{J}}\subseteq\sqrt{\wh{J}}$.
\end{proof}

\begin{proposition}\label{prop hom flat}
Suppose $R\rightarrow S$ is a flat homomorphism of rings, $M$ and $N$ are $R$-modules, and $M$ is finitely generated. 
Then there is a unique $S$-module isomorphism
\[
\Hom_R(M,N)\otimes_RS\cong\Hom_{S}(M\otimes_RS,N\otimes_RS)
\]
which takes $\varphi\otimes1$ to $\varphi\otimes\id_{S}$.
\end{proposition}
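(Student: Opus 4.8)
The plan is to exhibit the comparison map explicitly, deduce uniqueness at once from the prescribed values, and then prove bijectivity by reducing to the case of finite free modules via a finite presentation of $M$.

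First I would define the map
\[
\eta\colon\Hom_R(M,N)\otimes_R S\longrightarrow\Hom_S(M\otimes_R S,N\otimes_R S),\qquad
\eta(\varphi\otimes s)(m\otimes t)=\varphi(m)\otimes st,
\]
and check in the standard way that the right-hand side is $R$-bilinear in $(m,t)$ and $S$-linear, that $\eta$ is well defined and $S$-linear, and that $\eta(\varphi\otimes 1)=\varphi\otimes\id_S$. Since the elements $\varphi\otimes 1$ generate $\Hom_R(M,N)\otimes_R S$ as an $S$-module, any $S$-linear map with the stated values on these must agree with $\eta$; this settles uniqueness. It remains to prove that $\eta$ is bijective.

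For bijectivity I would fix a finite presentation $R^{\oplus m}\to R^{\oplus n}\to M\to 0$ of $M$ (in our applications $R$ is Noetherian, so a finitely generated $M$ is automatically finitely presented; in general one would assume $M$ finitely presented, which is what makes the argument work). Applying the left-exact functor $\Hom_R(-,N)$ gives an exact sequence $0\to\Hom_R(M,N)\to N^{\oplus n}\to N^{\oplus m}$, and tensoring with $S$ — which is exact by flatness and commutes with the two finite direct sums — yields
\[
0\to\Hom_R(M,N)\otimes_R S\to(N\otimes_R S)^{\oplus n}\to(N\otimes_R S)^{\oplus m}.
\]
On the other side, right-exactness of $-\otimes_R S$ turns the presentation of $M$ into a presentation $S^{\oplus m}\to S^{\oplus n}\to M\otimes_R S\to 0$, and applying $\Hom_S(-,N\otimes_R S)$ produces the exact sequence $0\to\Hom_S(M\otimes_R S,N\otimes_R S)\to(N\otimes_R S)^{\oplus n}\to(N\otimes_R S)^{\oplus m}$. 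These two left-exact sequences fit into a diagram whose vertical maps are $\eta$ on the left and, on the other two columns, the evident identifications $\Hom_R(R^{\oplus k},N)\otimes_R S\cong(N\otimes_R S)^{\oplus k}\cong\Hom_S(S^{\oplus k},N\otimes_R S)$; the diagram commutes because all horizontal maps are given by the same matrix. Since the two right-hand verticals are isomorphisms, so is $\eta$.

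The genuinely delicate points are the bookkeeping ones: one must use the finiteness hypothesis exactly where $\Hom$ out of the free modules is required to commute with $-\otimes_R S$ (this is false for infinite direct sums), and one must verify that the comparison diagram commutes — which amounts to the naturality of $\eta$ with respect to the maps $R^{\oplus m}\to R^{\oplus n}\to M$ and is routine once $\eta$ has been written out on pure tensors, but should not be skipped.
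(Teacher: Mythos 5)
Your argument is correct and is essentially the standard proof of the result the paper merely cites (\cite[Prop.~2.10]{Eis95}): exhibit the natural comparison map and reduce to finite free modules via a finite presentation of $M$, using flatness of $S$ and left-exactness of $\Hom$. Your parenthetical remark is also well taken --- the argument genuinely needs $M$ finitely presented, which is automatic here since all rings in the paper are Noetherian.
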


\begin{proof}
See \cite[Prop.~2.10]{Eis95}.
\end{proof}

\begin{corollary}
\label{cor conductor and completion} If $A$ is a reduced excellent semi-local
ring, then
\[
\wh{\Con_A}=\Con_{\wh{A}}.
\]
\end{corollary}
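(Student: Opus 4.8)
The plan is to realize the conductor as a $\Hom$-module and then invoke the compatibility of $\Hom$ with flat base change (Proposition~\ref{prop hom flat}). First I would collect the inputs supplied by the earlier results. Since $A$ is reduced, excellent, and semi-local, Theorem~\ref{thm die} shows that $A$ is normalization-finite, so that $\ol A$ is a finite $A$-module; that $\wh A$ is reduced, so that $\Con_{\wh A}$ is defined; and that $\ol{\wh A}=\ol A\otimes_A\wh A$. By Theorem~\ref{thm:prop-completion}, $\wh A$ is flat over $A$, and since $\Con_A$ is a finite $A$-module (an ideal of the Noetherian ring $A$), the same theorem identifies $\wh{\Con_A}$ with $\Con_A\otimes_A\wh A$, regarded as an ideal of $\wh A$ via the inclusion $\Con_A\otimes_A\wh A\into A\otimes_A\wh A=\wh A$.

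Next, by Remark~\ref{rem:char-conductor} applied to $A$ and to $\wh A$, I would identify $\Con_A=\ol A^{-1}=\Hom_A(\ol A,A)$ and $\Con_{\wh A}=\ol{\wh A}^{-1}=\Hom_{\wh A}(\ol{\wh A},\wh A)$, in each case by evaluation at $1$. Applying Proposition~\ref{prop hom flat} to the flat map $A\to\wh A$ with $M=\ol A$ (finitely generated) and $N=A$ then gives an $\wh A$-module isomorphism
\[
\Con_A\otimes_A\wh A=\Hom_A(\ol A,A)\otimes_A\wh A\ \cong\ \Hom_{\wh A}(\ol A\otimes_A\wh A,\ \wh A)=\Hom_{\wh A}(\ol{\wh A},\wh A)=\Con_{\wh A},
\]
where the third term is rewritten using $\ol A\otimes_A\wh A=\ol{\wh A}$. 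Combined with $\wh{\Con_A}=\Con_A\otimes_A\wh A$, this yields $\wh{\Con_A}=\Con_{\wh A}$.

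The only point that needs care — and the main, if modest, obstacle — is to be sure that this chain produces an honest equality of ideals of $\wh A$ and not merely an abstract isomorphism. This is bookkeeping: the isomorphism of Proposition~\ref{prop hom flat} sends $\varphi\otimes s$ to the homomorphism $m\otimes t\mapsto\varphi(m)\otimes st$, whose value at $1\otimes1$ is $\varphi(1)\otimes s$, and under the ``evaluate at $1$'' identifications of Remark~\ref{rem:char-conductor} this agrees with the image of $\varphi(1)\otimes s$ under the natural map $\Con_A\otimes_A\wh A\to\wh A$; hence all identifications are mutually compatible. Alternatively, one can avoid $\Hom$ and argue by double inclusion: $a\,\ol A\subseteq A$ implies $a\,\ol{\wh A}=a(\ol A\otimes_A\wh A)\subseteq A\otimes_A\wh A=\wh A$, so $\wh{\Con_A}=\Con_A\wh A\subseteq\Con_{\wh A}$; conversely, tensoring $0\to A\to\ol A\to\ol A/A\to0$ with the flat $\wh A$ gives $\ol{\wh A}/\wh A\cong(\ol A/A)\otimes_A\wh A$, and since the annihilator of a finite module commutes with flat base change, $\Con_{\wh A}=\Ann_{\wh A}\bigl((\ol A/A)\otimes_A\wh A\bigr)=\Ann_A(\ol A/A)\,\wh A=\Con_A\wh A=\wh{\Con_A}$.
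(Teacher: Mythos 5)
Your proof is correct and follows exactly the route the paper intends: its proof of this corollary is literally the instruction to combine Remark~\ref{rem:char-conductor}, Theorem~\ref{thm:prop-completion}.\eqref{compl1}, Proposition~\ref{prop hom flat}, and Theorem~\ref{thm die}.\eqref{die3}, which is precisely the chain of identifications you carry out (your compatibility check and the alternative annihilator argument are welcome extras, but not a different approach).
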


\begin{proof}
Use Remark~\ref{rem:char-conductor}, Theorem~\ref{thm:prop-completion}.\eqref{compl1},
Proposition~\ref{prop hom flat}, and Theorem~\ref{thm die}.\eqref{die3}.
\end{proof}

%%%%%%%%%%%%%%%%%%%%%%%%%%%%%%%%%%%%%%%%%%%%%%%%%%%%%%%%%%%%%%%%%%%%%%%%%%%%%%%%
\section{Algorithmic Normalization}\label{sec2}
%%%%%%%%%%%%%%%%%%%%%%%%%%%%%%%%%%%%%%%%%%%%%%%%%%%%%%%%%%%%%%%%%%%%%%%%%%%%%%%%

Throughout this section, $A$ denotes a reduced excellent ring. In particular, $A$ is
normalization-finite.

\begin{definition}\label{def testideal}
A \emph{test ideal} at a subset $W\subseteq\Spec(A)$ is an ideal $J\subseteq A$ satisfying the following conditions:

\begin{enumerate}
\item\label{test1} $J$ contains a non-zerodivisor of $A$,
\item\label{test2} $J$ is a radical ideal, and
\item\label{test3} $V(\Con_{A_P})\subseteq V(J_P)$ for all $P\in W$.
\end{enumerate}

A \emph{test ideal} for $A$ is an ideal $J\subseteq A$ satisfying \eqref{test1}, \eqref{test2}, and

\begin{enumerate}
\item[(3')]\label{test3b} $V(\Con_A)\subseteq V(J)$.
\end{enumerate}
\end{definition}

\begin{remark}\label{rem:sing-locus}
Since we assume that $A$ is normalization-finite,
\[
V(\Con_A)=N(A)\subseteq\Sing(A).
\]
Hence the vanishing ideal $J$ of $\Sing(A)$ is a valid test ideal for $A$. 
Indeed, $J$ contains a non-zerodivisor since $A$ is reduced and, hence, regular in codimension zero, so that $J\otimes_AQ(A)=Q(A)$.
\end{remark}

In what follows, if $J\subseteq A$ is any ideal containing a non-zerodivisor of $A$, we regard $A$ as a subring of $\End_A(J)=\Hom_A(J,J)$ by sending $a\in A$ to multiplication by $a$, and $\End_A(J)$ as a fractional ideal of $A$ as in Lemma~\ref{lem hom fractional ideal}. 
Then
\begin{equation}\label{End in nor}
A\subseteq\End_A(J)\subseteq\ol{A}
\end{equation}
(see \cite[Lem.~3.6.1]{GP08}).

\begin{proposition}[Grauert and Remmert Criterion]\label{prop:crit-GR}
Let $J$ be a test ideal for $A$. 
Then $A$ is normal if and only if $A=\End_A(J)$.
\end{proposition}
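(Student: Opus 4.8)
The plan is to prove both implications directly, using the Grauert--Remmert normality criterion from \cite[Anhang, \S3.3, Satz~7]{GR71} as the engine and reducing everything to a statement about the conductor.

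\textbf{The easy direction.} If $A$ is normal, then $\ol{A}=A$, so $\End_A(J)=\ol{A}=A$ by the inclusions \eqref{End in nor}; here we use only that $J$ contains a non-zerodivisor, which is guaranteed by condition \eqref{test1} in the definition of a test ideal. So this direction needs no real work.

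\textbf{The hard direction.} Suppose $A=\End_A(J)$; we must show $A$ is normal. The Grauert--Remmert criterion states (in the reduced Noetherian setting) that $A$ is normal provided one can exhibit a radical ideal $J$ containing a non-zerodivisor such that $V(\Con_A) \subseteq V(J)$ (equivalently, every associated prime of $A/J$ lies in the non-normal locus) and such that $J = J\colon_{Q(A)}J$ inside $Q(A)$; the latter equality is exactly the statement $A = \End_A(J)$ once we use Lemma~\ref{lem hom fractional ideal} to identify $\End_A(J)$ with the fractional ideal $\{x\in Q(A)\mid xJ\subseteq J\} = J\colon_{Q(A)}J$. Conditions \eqref{test1} and \eqref{test2} in the definition of test ideal for $A$ give us precisely that $J$ is radical and contains a non-zerodivisor, and condition~(3') gives $V(\Con_A)\subseteq V(J)$. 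Hence all hypotheses of the Grauert--Remmert criterion are met, and we conclude $A=\ol{A}$.

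\textbf{Main obstacle.} The genuine content is matching our notion of ``test ideal for $A$'' against the hypotheses of the classical criterion; in particular verifying that $V(\Con_A)\subseteq V(J)$ is the right translation of the condition in \cite{GR71} that $J$ be a radical ideal whose zero locus contains the non-normal locus, and that $A=\End_A(J)$ translates correctly into the fixed-point condition $J = J\colon_{Q(A)}J$ via the identification of Lemma~\ref{lem hom fractional ideal}. Since $A$ is excellent and reduced, it is normalization-finite (stated at the start of Section~\ref{sec2}), so $V(\Con_A)=N(A)$ by Proposition~\ref{lem finite local}.\eqref{finite-local-2}, and the containment $V(\Con_A)\subseteq V(J)$ is exactly ``$J$ vanishes on the non-normal locus.'' Once these identifications are in place, the proof is a direct citation; alternatively one can cite \cite[Prop.~3.2]{GLS10} or \cite[Thm.~3.6.4]{GP08}, where this precise reformulation of the Grauert--Remmert criterion is carried out.
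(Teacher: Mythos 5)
Your proposal is correct and takes essentially the same route as the paper, whose entire proof is the citation to \cite{GR71} and \cite[Prop.~3.6.5]{GP08}; you have merely spelled out the bookkeeping (identifying $\End_A(J)$ with $J:_{Q(A)}J$ and matching the test-ideal conditions to the classical hypotheses), which is exactly what that citation presupposes. One small slip: your parenthetical gloss of $V(\Con_A)\subseteq V(J)$ as ``every associated prime of $A/J$ lies in the non-normal locus'' states the reverse inclusion $V(J)\subseteq N(A)$ and is not equivalent, but this is harmless since the condition you actually verify and use is the correct one, restated correctly later as ``$J$ vanishes on the non-normal locus.''
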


\begin{proof}
See \cite{GR71}, \cite[Prop.~3.6.5]{GP08}.
\end{proof}

Before describing the normalization algorithm arising from the Grauert and Remmert criterion, we discuss how the criterion behaves with respect to localization and completion. 
We first address the test ideals:

\begin{lemma}\label{lem test ideal local}
Let $J\subset A$ be an ideal, and let $W\subset\Spec(A)$. 
Then:
\begin{enumerate}
\item If $J$ is a test ideal at $W$, and $P\in W$, then $J_P$ is a test ideal for $A_P$.
\item If $W\supseteq N(A)$, then $J$ is a test ideal for $A$ if and only if it is a test ideal at $W$.
\end{enumerate}
\end{lemma}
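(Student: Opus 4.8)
The plan is to verify the three defining conditions of a test ideal (Definition~\ref{def testideal}) in each direction, using that both the conductor and radicality are compatible with localization.

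For part (1), suppose $J$ is a test ideal at $W$ and fix $P\in W$. Condition~\eqref{test1}: since $J$ contains a non-zerodivisor $g$ of $A$, and localization is flat, $g/1$ is a non-zerodivisor in $A_P$ lying in $J_P$. Condition~\eqref{test2}: radicality localizes, so $J_P$ is radical because $J$ is. Condition~(3'): I need $V(\Con_{A_P})\subseteq V(J_P)$ in $\Spec(A_P)$. This is exactly hypothesis~\eqref{test3} of Definition~\ref{def testideal} at the point $P$. (Here one uses that primes of $A_P$ correspond to primes $Q\subseteq P$ of $A$, and that $(\Con_{A_P})_Q=\Con_{(A_P)_Q}=\Con_{A_Q}$ by Proposition~\ref{lem finite local}\eqref{finite-local-3} applied to $A_P$, which is again reduced excellent and hence normalization-finite by Theorem~\ref{thm:Grothendieck} and Theorem~\ref{thm die}; but in fact condition~\eqref{test3} is stated precisely so that this step is immediate.)

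For part (2), assume $W\supseteq N(A)$. The forward direction is the special case of part (1) is not quite what is needed; rather, a test ideal for $A$ and a test ideal at $W$ share conditions~\eqref{test1} and~\eqref{test2}, so only conditions~(3') versus~\eqref{test3} need comparing. So it suffices to show: given~\eqref{test1} and~\eqref{test2}, condition~(3') holds if and only if condition~\eqref{test3} holds for all $P\in W$. For ``(3')$\Rightarrow$\eqref{test3}'': by Proposition~\ref{lem finite local}\eqref{finite-local-3}, $(\Con_A)_P=\Con_{A_P}$, and localizing the inclusion $\Con_A\subseteq\sqrt{J}=J$ (note $\Con_A\subseteq J$ follows from $V(\Con_A)\subseteq V(J)$ together with $J$ radical) gives $\Con_{A_P}\subseteq J_P$, hence $V(\Con_{A_P})\subseteq V(J_P)$. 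For the converse ``\eqref{test3} for all $P\in W$ $\Rightarrow$ (3')'': I want $V(\Con_A)\subseteq V(J)$, equivalently $\Con_A\subseteq\sqrt J=J$. By Remark~\ref{rem:sing-locus}, $V(\Con_A)=N(A)\subseteq W$, so for every prime $P\in V(\Con_A)$ we have $P\in W$ and, by~\eqref{test3} applied at $P$, $V(\Con_{A_P})\subseteq V(J_P)$; since the maximal ideal $PA_P$ contains $\Con_{A_P}=(\Con_A)_P$, it lies in $V(\Con_{A_P})$, hence in $V(J_P)$, so $J_P\subseteq PA_P$, which forces $J\subseteq P$, i.e.\ $P\in V(J)$. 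Thus $V(\Con_A)\subseteq V(J)$.

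The only mild subtlety—really the main point to get right—is the translation between the ``containment of vanishing loci'' formulation and the ``containment of radical ideals'' formulation, and being careful that Proposition~\ref{lem finite local}\eqref{finite-local-3} applies because $A$ (and its localizations) are reduced excellent, hence normalization-finite by Theorem~\ref{thm die}\eqref{die2}. Everything else is a routine unwinding of definitions together with flatness of localization.
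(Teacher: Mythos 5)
Your overall strategy---localize, use $(\Con_A)_P=\Con_{A_P}$, and reduce everything to comparing condition (3') with condition \eqref{test3}---is exactly the paper's, and part (1) as well as the implication ``\eqref{test3} on $W\supseteq N(A)$ implies (3')'' are argued correctly (the latter by a clean prime-by-prime check on $V(\Con_A)=N(A)$). The problem is the forward direction of part (2): you translate $V(\Con_A)\subseteq V(J)$ into $\Con_A\subseteq\sqrt J=J$, but the ideal--variety correspondence is order-reversing, so the correct equivalent of (3') is $J\subseteq\sqrt{\Con_A}$ (this is also how the paper reads the condition; see the proof of Lemma~\ref{prop test ideal completion}). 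The implication you invoke is simply false: in $K[X,Y]$ take $I=\ideal{X,Y}$ and $J=\ideal{X}$; then $V(I)\subseteq V(J)$ and $J$ is radical, yet $I\not\subseteq J$. You then commit the mirror-image error when translating back, deducing $V(\Con_{A_P})\subseteq V(J_P)$ from $\Con_{A_P}\subseteq J_P$, which would in fact give the reverse inclusion of vanishing loci. The two reversals cancel, so you land on the desired conclusion, but via two false intermediate statements; an argument that reaches a true conclusion only because of compensating errors is not a proof.

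The repair is immediate and recovers the paper's own argument: from (3') one has $J\subseteq\sqrt{\Con_A}$; localizing and using $\bigl(\sqrt{\Con_A}\bigr)_P=\sqrt{(\Con_A)_P}=\sqrt{\Con_{A_P}}$ (the last equality by Proposition~\ref{lem finite local}.\eqref{finite-local-3}) gives $J_P\subseteq\sqrt{\Con_{A_P}}$, i.e.\ $V(\Con_{A_P})\subseteq V(J_P)$, which is condition \eqref{test3} at $P$. With that correction your proof coincides with the paper's.
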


\begin{proof}
Condition \eqref{test3} of Definition~\ref{def testideal} means that condition (3') %\eqref{test3b} 
of the definition holds for the rings $A_P$ together with the ideals $J_P$, $P\in W$. 
The first statement of the lemma follows since conditions \eqref{test1} and \eqref{test2} of Definition~\ref{def testideal} carry over from $J$ to $J_P$: 
Use the flatness of localization and that localization commutes with passing to radicals, respectively. Taking into account that $(\Con_A)_P=\Con_{A_P}$ since $A$ is assumed normalization-finite (see Proposition~\ref{lem finite local}.\eqref{finite-local-3}), the same reasoning shows the second statement of the lemma.
Indeed, $J\subseteq\sqrt{\Con_A}$ if and only if $J_P\subseteq\left(\sqrt{\Con_A}\right)_P$ for all $P\in\Spec(A)$, and $\Con_P=A_P$ if $P\not\in N(A)$.
\end{proof}

\begin{lemma}\label{prop test ideal completion}
If $A$ is semi-local and $J$ is a test ideal for $A$, then $\wh{J}$ is a test ideal for $\wh{A}$.
\end{lemma}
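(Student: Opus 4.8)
The plan is to verify the three defining conditions of a test ideal (Definition~\ref{def testideal}) for $\wh{J}\subseteq\wh{A}$, using the fact that $A$ is reduced excellent and semi-local, so that all the structural results of Section~\ref{sec1} apply to both $A$ and $\wh{A}$. Note first that $\wh{A}$ is again reduced (Theorem~\ref{thm die}.\eqref{die1}) and normalization-finite, since complete semi-local Noetherian rings are excellent (Remark~\ref{rem:geom-rings-are-exc}), so it makes sense to speak of a test ideal for $\wh{A}$.

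First I would check condition \eqref{test2}, that $\wh{J}$ is radical: this is immediate from Corollary~\ref{cor radical and completion}, since $J$ is radical and $\wh{A}$ is a completion of the excellent ring $A$. Next, condition \eqref{test1}: $J$ contains a non-zerodivisor $g$ of $A$, and by Theorem~\ref{thm:prop-completion}.\eqref{compl1} together with Remark~\ref{rem:semi-local-compl} we may regard $J\otimes_A\wh{A}=\wh{J}$ as an ideal of $\wh{A}=A\otimes_A\wh{A}$; since $\wh{A}$ is flat over $A$, multiplication by $g$ stays injective on $\wh{A}$, so $g\in\wh{J}$ is a non-zerodivisor of $\wh{A}$. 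Finally, condition (3'): we must show $V(\Con_{\wh{A}})\subseteq V(\wh{J})$, i.e.\ $\wh{J}\subseteq\sqrt{\Con_{\wh{A}}}$. We know $J\subseteq\sqrt{\Con_A}$, hence $\left(\sqrt{\Con_A}\right)^m\subseteq\Con_A$ for some $m$, so $J^m\subseteq\Con_A$; applying $-\otimes_A\wh{A}$ and flatness gives $\wh{J}^m\subseteq\wh{\Con_A}$. By Corollary~\ref{cor conductor and completion}, $\wh{\Con_A}=\Con_{\wh{A}}$, so $\wh{J}^m\subseteq\Con_{\wh{A}}$, whence $\wh{J}\subseteq\sqrt{\Con_{\wh{A}}}$, as desired.

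The only real subtlety is keeping track of the various identifications of tensor products with completions and with ideals of $\wh{A}$; the crucial input that makes the conductor condition work is Corollary~\ref{cor conductor and completion}, which in turn rests on $\ol{\wh{A}}=\wh{\ol{A}}$ from Theorem~\ref{thm die}.\eqref{die3} — this is where the semi-local hypothesis is genuinely used, since that is where excellence-plus-semi-local guarantees normalization commutes with completion. Everything else is a routine application of flatness of completion (Theorem~\ref{thm:prop-completion}.\eqref{compl1}) and the behaviour of radicals under completion (Corollary~\ref{cor radical and completion}).
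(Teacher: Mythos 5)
Your proof is correct and follows essentially the same route as the paper: flatness of completion for the non-zerodivisor condition, Corollary~\ref{cor radical and completion} for radicality, and Corollary~\ref{cor conductor and completion} for the conductor condition. The only cosmetic difference is that for condition (3') you pass through a power $J^m\subseteq\Con_A$ rather than completing the inclusion $J\subseteq\sqrt{\Con_A}$ directly and commuting radical with completion, which is the same manipulation already packaged inside Corollary~\ref{cor radical and completion}.
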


\begin{proof}
If $g\in J$ is a non-zerodivisor of $A$, then $g\otimes_A1\in J\otimes_A\wh{A} = \wh{J}$ is a non-zerodivisor of $\wh{A}$ by the flatness of completion. 
Moreover, by Corollary~\ref{cor radical and completion}, $\wh{J}$ is radical and $\wh{J}\subseteq\wh{\sqrt{\Con_A}} =\sqrt{\wh{\Con_A}}=\sqrt{\Con_{\wh{A}}}$, where we use the assumption $J\subseteq\sqrt{\Con_A}$, and where the last equality holds by Corollary~\ref{cor conductor and completion}.
\end{proof}

Next, in the two corollaries of Proposition~\ref{prop hom flat} below, we treat the endomorphism rings appearing in the Grauert and Remmert criterion.

\begin{remark}
As usual, if $P$ is a prime of a ring $R$, and $M$ is an $R$-module, we write $M_P$ for the localization of $M$ at $R\setminus P$. 
Recall that if $R\subseteq R'$ is a ring extension, and $M'$ is an $R'$-module, then the localization of $M'$ at $R\setminus P\subset R'$ coincides with $M_P'$ if $M'$ is considered as an $R$-module.
\end{remark}

\begin{corollary}\label{cor GR local}
Let $J$ be an ideal of $A$. 
Then, for all $P\in\Spec(A)$,
\[
\big(\End_A(J)\big)_P=\End_{A_P}(J_P).
\]
Further, $A=\End_A(J)$ if and only if $A_P=\End_{A_P}(J_P)$ for all $P\in\Spec(A)$.
\end{corollary}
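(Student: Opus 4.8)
The plan is to deduce both assertions of Corollary~\ref{cor GR local} from Proposition~\ref{prop hom flat} applied to the flat homomorphism $A\to A_P$, together with the compatibility of the fractional-ideal identification of $\Hom$ under localization. First I would fix a non-zerodivisor $g\in J$ of $A$; by flatness of localization its image in $J_P$ is a non-zerodivisor of $A_P$, so both $\End_A(J)$ and $\End_{A_P}(J_P)$ are realized as fractional ideals inside $Q(A)$ and $Q(A_P)$ respectively, via $\varphi\mapsto\varphi(g)/g$, as in Lemma~\ref{lem hom fractional ideal}. Proposition~\ref{prop hom flat} with $R=A$, $S=A_P$, $M=N=J$ gives a canonical $A_P$-module isomorphism $\End_A(J)\otimes_A A_P\cong\End_{A_P}(J\otimes_A A_P)=\End_{A_P}(J_P)$ sending $\varphi\otimes 1$ to $\varphi\otimes\id_{A_P}$. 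The point is then to check that under the respective embeddings into $Q(A_P)$ this isomorphism is simply the localization map, i.e.\ that $\big(\End_A(J)\big)_P$ and $\End_{A_P}(J_P)$ are literally the same $A_P$-submodule of $Q(A_P)$: an element $\varphi\in\End_A(J)$ localizes to $\varphi(g)/g\in Q(A)\subseteq Q(A_P)$, and the same fraction represents $\varphi\otimes\id_{A_P}\in\End_{A_P}(J_P)$ since $(\varphi\otimes\id)(g/1)=\varphi(g)/1$. Thus the first displayed equality holds, with both sides identified as the fractional ideal $\{x\in Q(A_P)\mid xJ_P\subseteq J_P\}$.

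For the second assertion, the inclusion $A\subseteq\End_A(J)$ localizes, for each $P$, to $A_P\subseteq\big(\End_A(J)\big)_P=\End_{A_P}(J_P)$; hence if $A=\End_A(J)$ then $A_P=\End_{A_P}(J_P)$ for all $P$. Conversely, if $A_P=\End_{A_P}(J_P)$ for all $P\in\Spec(A)$, then the $A$-module inclusion $A\into\End_A(J)$ becomes an isomorphism after localizing at every prime, hence is an isomorphism, by the standard local-global principle for module homomorphisms (a map of $A$-modules is an isomorphism iff it is so at every prime). I would phrase this last step by applying the principle to the cokernel $\End_A(J)/A$: it vanishes locally everywhere, so it vanishes.

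The routine but essential check — and the only place any care is needed — is the compatibility of the two descriptions of $\Hom$ as a fractional ideal under localization, i.e.\ that the abstract isomorphism of Proposition~\ref{prop hom flat} intertwines the embeddings $\varphi\mapsto\varphi(g)/g$ over $A$ and over $A_P$. This is immediate once one writes out what $\varphi\otimes\id_{A_P}$ does to $g/1$, but it is the step that justifies writing an honest equality $\big(\End_A(J)\big)_P=\End_{A_P}(J_P)$ of fractional ideals rather than merely an isomorphism. Everything else is formal flatness bookkeeping and the local-global principle, so I do not expect a genuine obstacle; the proof will be short.
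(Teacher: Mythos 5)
Your argument is correct and is exactly the paper's proof: apply Proposition~\ref{prop hom flat} to the flat map $A\to A_P$ for the first equality, then use that equality of submodules is a local property for the second. Your extra verification that the abstract isomorphism intertwines the fractional-ideal embeddings $\varphi\mapsto\varphi(g)/g$ is a worthwhile elaboration of a point the paper leaves implicit, but it is the same route.
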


\begin{proof}
Apply Proposition~\ref{prop hom flat} and use that equality is a local property.
\end{proof}

\begin{corollary}\label{cor GR completion}
If $A$ is semi-local, and $J$ is an ideal of $A$,
then
\[
\wh{\End_A(J)}=\End_{\wh{A}}(\wh{J}),
\]
and $A=\End_A(J)$ if and only if $\wh{A}=\End_{\wh{A}}(\wh{J})$.
\end{corollary}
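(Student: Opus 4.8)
The plan is to deduce Corollary~\ref{cor GR completion} from Proposition~\ref{prop hom flat} (flat base change for $\Hom$) together with Theorem~\ref{thm:prop-completion}, exactly as Corollary~\ref{cor GR local} was deduced from Proposition~\ref{prop hom flat} in the localized setting. First I would observe that since $A$ is semi-local, $\wh{\ }$ is completion at $\mm_A$, and by Theorem~\ref{thm:prop-completion}.\eqref{compl2} this is a faithfully flat ring homomorphism $A\to\wh A$; in particular it is flat, and $J\otimes_A\wh A=\wh J$ and $\End_A(J)\otimes_A\wh A=\wh{\End_A(J)}$ by Theorem~\ref{thm:prop-completion}.\eqref{compl1}, using that $J$ and $\End_A(J)$ are finite $A$-modules (the latter because $A$ is normalization-finite, so $\End_A(J)$ is sandwiched between $A$ and the finite module $\ol A$ as in \eqref{End in nor}).

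Next I would apply Proposition~\ref{prop hom flat} with $R=A$, $S=\wh A$, and $M=N=J$: this gives a canonical $\wh A$-module isomorphism
\[
\wh{\End_A(J)}=\End_A(J)\otimes_A\wh A\cong\End_{\wh A}(J\otimes_A\wh A)=\End_{\wh A}(\wh J),
\]
which proves the first assertion. One should note that this isomorphism is compatible with the respective inclusions into $Q(\wh A)$ coming from Lemma~\ref{lem hom fractional ideal}: picking a non-zerodivisor $g\in J$ of $A$, it remains a non-zerodivisor of $\wh A$ by flatness and lies in $\wh J$, and under the isomorphism of Proposition~\ref{prop hom flat} the map $\varphi\otimes 1$ corresponds to $\varphi\otimes\id_{\wh A}$, which sends $g$ to $\varphi(g)$; hence both sides are identified with the same fractional ideal $\{\varphi(g)/g\}\subseteq Q(\wh A)$. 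Thus the equality $\wh{\End_A(J)}=\End_{\wh A}(\wh J)$ holds as fractional ideals, not merely up to abstract isomorphism.

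For the second assertion, the implication $A=\End_A(J)\Rightarrow\wh A=\End_{\wh A}(\wh J)$ is immediate from the first part, since completion of $A=\End_A(J)$ gives $\wh A=\wh{\End_A(J)}=\End_{\wh A}(\wh J)$. For the converse, I would use faithful flatness: the inclusion $A\subseteq\End_A(J)$ of finite $A$-modules is an equality if and only if the cokernel vanishes, and by Theorem~\ref{thm:prop-completion}.\eqref{compl2} (faithful flatness) the cokernel vanishes if and only if it vanishes after tensoring with $\wh A$, i.e.\ if and only if $\wh A=\wh{\End_A(J)}=\End_{\wh A}(\wh J)$. Alternatively, since $A\to\wh A$ is injective by Theorem~\ref{thm:prop-completion}.\eqref{compl2}, one has $A\subseteq\End_A(J)=\wh{\End_A(J)}\cap Q(A)=\End_{\wh A}(\wh J)\cap Q(A)$ inside $Q(\wh A)$, so $\End_{\wh A}(\wh J)=\wh A$ forces $\End_A(J)\subseteq\wh A\cap Q(A)=A$.

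I do not expect a genuine obstacle here; the only point requiring a little care is the bookkeeping in the previous paragraph, namely checking that the abstract base-change isomorphism of Proposition~\ref{prop hom flat} is the identity once both endomorphism rings are realized as fractional ideals of $\wh A$ via a common non-zerodivisor $g\in J$. Once that compatibility is in place, the statement reduces to faithfully flat descent of the equality $A=\End_A(J)$, which is routine.
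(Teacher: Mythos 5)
Your proposal is correct and follows exactly the paper's (very terse) argument: apply Proposition~\ref{prop hom flat} for the base change $A\to\wh A$ and use faithful flatness of completion for the equivalence. The extra bookkeeping you supply (identifying both sides as the same fractional ideal via a common non-zerodivisor $g\in\wh J$, and descending the vanishing of the cokernel of $A\hookrightarrow\End_A(J)$) is precisely what the paper leaves implicit.
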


\begin{proof}
Apply Proposition~\ref{prop hom flat} and use that $\wh{A}$ is faithfully flat over $A$.
\end{proof}

The Grauert and Remmert criterion allows us to compute $\ol{A}$ by successively enlarging the given ring by an endomorphism ring. 
Here, we need:

\begin{lemma}
Let $A\subseteq A'\subseteq\ol{A}$ be an intermediate ring, and let $J$ be a test ideal for $A$. 
Then $\sqrt{JA'}$ is a test ideal for $A'$.
\end{lemma}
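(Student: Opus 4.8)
The plan is to verify the three conditions of Definition~\ref{def testideal} for the ideal $\sqrt{JA'}$ of $A'$, after first checking that $A'$ is a ring to which the definition applies. Since $A'\subseteq\ol A\subseteq Q(A)$ and $Q(A)$ is reduced, $A'$ is reduced; since $\ol A$ is a finite $A$-module (Theorem~\ref{thm die}.\eqref{die2}), its $A$-submodule $A'$ is finite over $A$, so $A'$ is a module-finite $A$-algebra, hence excellent by Theorem~\ref{thm:Grothendieck} and normalization-finite by Theorem~\ref{thm die}.\eqref{die2}. I would also record the two birationality facts on which the whole iterated algorithm rests, namely $Q(A')=Q(A)$ and $\ol{A'}=\ol A$; both follow by pulling the factor projections of the splitting of normalization $\ol A\cong\prod_{i=1}^s\ol{A/P_i}$ (\cite[Thm.~1.5.20]{dJP00}, as in the proof of Theorem~\ref{thm die}.\eqref{die2}) back along $A'\hookrightarrow\ol A$, which identifies the minimal primes of $A'$ with the contractions of those of $\ol A$, each lying over a minimal prime $P_i$ of $A$, whence $Q(A')=\prod_i Q(A/P_i)=Q(A)$. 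Granting this, $\ol{A'}=\ol A$ is immediate: $\ol A$ is finite, hence integral, over $A'$ and integrally closed in $Q(A)=Q(A')$, while $A'\subseteq\ol A$ gives the reverse inclusion.

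Condition~\eqref{test1}: choose a non-zerodivisor $g\in J$ of $A$, which exists because $J$ is a test ideal for $A$. Being a non-zerodivisor of $A$, $g$ is a unit of $Q(A)$, hence a non-zerodivisor of the subring $A'\subseteq Q(A)$; and $g\in J\subseteq JA'\subseteq\sqrt{JA'}$. Condition~\eqref{test2} holds since $\sqrt{JA'}$ is radical by construction.

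Condition~(3') for $A'$ reads $V(\Con_{A'})\subseteq V(\sqrt{JA'})$, equivalently $\sqrt{JA'}\subseteq\sqrt{\Con_{A'}}$, equivalently (the right-hand side being a radical ideal of $A'$) $J\subseteq\sqrt{\Con_{A'}}$. This is where the only real content lies, and it comes down to the inclusion $\Con_A\subseteq\Con_{A'}$: recall that $\Con_A$ is the largest ideal of $A$ that is also an ideal of $\ol A$, so $\Con_A$ is an ideal of $\ol A$ contained in $A\subseteq A'$; as $A'\subseteq\ol A$, it is then also an ideal of $A'$, hence an ideal of $A'$ which is at the same time an ideal of $\ol{A'}=\ol A$, and therefore $\Con_A\subseteq\Con_{A'}$ by the analogous maximality of $\Con_{A'}$. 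Passing to radicals gives $\sqrt{\Con_A}\subseteq\sqrt{\Con_{A'}}$, while $J\subseteq\sqrt{\Con_A}$ because $J$ satisfies~(3') for $A$; combining, $J\subseteq\sqrt{\Con_{A'}}$, as required. The main obstacle is thus the essentially bookkeeping step of establishing $\ol{A'}=\ol A$, so that $\Con_{A'}$ is the conductor into the same normalization; once that is in place, the inclusion $\Con_A\subseteq\Con_{A'}$ and the whole lemma are formal.
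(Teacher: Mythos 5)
Your proof is correct and follows essentially the same route as the paper's: the non-zerodivisor $g\in J$ survives because it is a unit of $Q(A)$, radicality is by construction, and condition (3') reduces to $\Con_A\subseteq\Con_{A'}$. You simply supply more detail than the paper does (excellence of $A'$, the identification $\ol{A'}=\ol{A}$, and the maximality argument for $\Con_A\subseteq\Con_{A'}$, all of which the paper leaves implicit), so there is nothing to object to.
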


\begin{proof}
Write $J':=\sqrt{JA'}$. 
If $g\in J$ is a non-zerodivisor of $A$, then $g\in J\subseteq JA'\subseteq J'$ is a
non-zerodivisor of $Q(A)$ and, hence, of $A'$. 
Since $J\subseteq\sqrt{\Con_A}$ by assumption, and $\Con_A\subseteq\Con_{A'}$, we have $J'\subseteq\sqrt{\Con_AA'}\subseteq\sqrt{\Con_{A'}}$.
\end{proof}

Given any radical ideal $J\subseteq A$ containing a non-zerodivisor, we inductively define radical ideals and intermediate rings by setting $A_{0}=A$,
\[
J_i=\sqrt{JA_i},\text{ and }
A_{i+1}=\End_{A_i}(J_i)\subseteq Q(A).
\]
Here, with $A$, also $Q(A)$ and, hence, all the $A_i$ are reduced. 
Since we assume that $A$ is Noetherian and normalization-finite, we get, thus, a finite chain of extensions of reduced Noetherian rings
\[
A=A_0\subsetneqq\dots\subsetneqq A_{i-1}\subsetneqq A_i\subsetneqq\dots\subsetneqq A_{n}=A_{n+1}\subseteq\ol{A}.
\]

\begin{notation}
We write $n(A,J)=n$ for the number of steps above.
\end{notation}

Note that if $J$ is a test ideal for $A$, then each $J_i$ is a test ideal for $A_i$, so that $A_{n}=\ol{A}$ by the Grauert and Remmert criterion. 
More generally, by the proof of \cite[Prop.~3.3]{BDLPSS13}, we have:

\begin{proposition}\label{prop local grauert-remmert}
Let $A\subseteq A'\subseteq\ol{A}$ be an intermediate ring. 
Let $W\subseteq\Spec(A)$, let $J$ be a test ideal at $W$, and let $J'=\sqrt{JA'}$. 
If
\[
A'\cong\End_{A'}(J'),
\]
then $A'_P$ is normal for each $P\in W$.
\end{proposition}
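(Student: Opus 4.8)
The plan is to reduce the global normality statement at points of $W$ to a purely local statement, and then invoke the Grauert--Remmert criterion locally. First I would pass to an arbitrary $P\in W$ and localize everything at $P$. By Lemma~\ref{lem test ideal local}, since $J$ is a test ideal at $W$ and $P\in W$, the localization $J_P$ is a test ideal for $A_P$. The intermediate ring behaves well under this localization: $A\subseteq A'\subseteq\ol A$ gives, after localizing at $P$, the chain $A_P\subseteq A'_P\subseteq(\ol A)_P=\ol{A_P}$, using that localization commutes with normalization (which holds here, $A$ being normalization-finite, e.g. via Proposition~\ref{lem finite local}.\eqref{finite-local-3} or the standard fact that normalization commutes with localization). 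So $A'_P$ is an intermediate ring between $A_P$ and its normalization.

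Next I would identify the relevant test ideal for $A'_P$. Set $J'=\sqrt{JA'}$ as in the statement; the claim I need is that localizing commutes with this construction, i.e. $(J')_P=\sqrt{J_P A'_P}$. This follows because localization is flat and commutes with forming radicals, and $(JA')_P=J_P A'_P$. Granting this, $(J')_P$ is precisely the ideal obtained from the test ideal $J_P$ for $A_P$ by the standard extend-and-radicalize recipe applied to the intermediate ring $A'_P$; by the Lemma preceding the definition of $n(A,J)$ (the one showing $\sqrt{JA'}$ is a test ideal for $A'$, applied over $A_P$), $(J')_P$ is a test ideal for $A'_P$.

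Finally I would use the hypothesis $A'\cong\End_{A'}(J')$. By Corollary~\ref{cor GR local}, equality of a ring with an endomorphism ring of an ideal is a local property: $\big(\End_{A'}(J')\big)_P=\End_{A'_P}\big((J')_P\big)$, so from $A'=\End_{A'}(J')$ we get $A'_P=\End_{A'_P}\big((J')_P\big)$. Now apply the Grauert--Remmert criterion (Proposition~\ref{prop:crit-GR}) to the reduced Noetherian ring $A'_P$ together with its test ideal $(J')_P$: since $A'_P$ equals the endomorphism ring of this test ideal, $A'_P$ is normal. As $P\in W$ was arbitrary, this is the assertion.

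The main obstacle is bookkeeping the compatibilities rather than any deep point: one must be sure that (a) normalization commutes with localization in this setting, (b) $\sqrt{\,\cdot\,}$ and ideal extension both commute with localization so that $(\sqrt{JA'})_P=\sqrt{J_P A'_P}$, and (c) the hypothesis is stated for the isomorphism $A'\cong\End_{A'}(J')$ given by the canonical inclusion, so that after localizing it really yields the canonical equality $A'_P=\End_{A'_P}((J')_P)$ needed to feed into Proposition~\ref{prop:crit-GR}. All three are routine given the results already assembled in Sections~\ref{sec1} and~\ref{sec2}; the argument is essentially the localization of the global Grauert--Remmert iteration, which is exactly the content of \cite[Prop.~3.3]{BDLPSS13} referenced in the statement.
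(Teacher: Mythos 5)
Your argument is correct and takes essentially the approach the paper delegates to the proof of \cite[Prop.~3.3]{BDLPSS13}: localize at $P\in W$, observe that $(J')_P=\sqrt{J_PA'_P}$ is a test ideal for the reduced excellent ring $A'_P$ (via Lemma~\ref{lem test ideal local} and the lemma on $\sqrt{JA'}$ applied over $A_P$), and then apply Corollary~\ref{cor GR local} together with the Grauert--Remmert criterion (Proposition~\ref{prop:crit-GR}) to conclude normality of $A'_P$. The compatibilities you list under (a)--(c) are all supplied by Proposition~\ref{lem finite local}, flatness of localization, and Proposition~\ref{prop hom flat}, so there is no gap.
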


We now relate the behaviour of the global version of the normalization algorithm to that of its local version:

\begin{proposition}\label{prop max formula}
Let $J\subseteq A$ be a radical ideal containing a non-zerodivisor of $A$, and let $W\subseteq \Spec(A)$. 
Then
\begin{align*}
n(A,J)&\geq\max_{P\in W}n\left(A_P,J_P\right) \\
&=\max_{P\in W}n\left(\wh{A_P},\wh{J_P}\right).
\end{align*}
Equality holds if either $V(J)\subseteq W$ or $N(A)\subseteq W$.
\end{proposition}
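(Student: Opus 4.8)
The plan is to establish the inequality, then the equality of the two maxima, and finally the cases of equality. For the inequality $n(A,J)\geq\max_{P\in W}n(A_P,J_P)$, I would argue that the chain of intermediate rings $A=A_0\subsetneqq A_1\subsetneqq\cdots\subsetneqq A_n=\overline A$ associated to $(A,J)$ localizes at $P$ to a chain $A_P=(A_0)_P\subseteq(A_1)_P\subseteq\cdots\subseteq(A_n)_P=\overline{A}_P$ for the data $(A_P,J_P)$. The key points: by Corollary~\ref{cor GR local}, $(A_{i+1})_P=\big(\End_{A_i}(J_i)\big)_P=\End_{(A_i)_P}\big((J_i)_P\big)$; and since localization commutes with radicals, $(J_i)_P=\big(\sqrt{JA_i}\,\big)_P=\sqrt{J_P(A_i)_P}$, so the localized chain is exactly the chain produced by the algorithm for $(A_P,J_P)$ (possibly with repetitions, as some inclusions may already be equalities after localizing). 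Since the algorithm for $(A_P,J_P)$ stabilizes as soon as two consecutive terms agree, its step count $n(A_P,J_P)$ is at most the number of strict inclusions in the ambient chain, i.e.\ at most $n(A,J)$. This gives the inequality; note $J_P$ indeed contains a non-zerodivisor of $A_P$ (flatness of localization).

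For the equality $\max_{P\in W}n(A_P,J_P)=\max_{P\in W}n(\wh{A_P},\wh{J_P})$, the essential input is that completion commutes with the entire algorithm. Fix $P$ and abbreviate $B=A_P$, which is reduced, excellent (Theorem~\ref{thm:Grothendieck}, Remark~\ref{rem:geom-rings-are-exc}) and semi-local, hence normalization-finite. Running the algorithm for $(B,J_P)$ gives a chain $B=B_0\subseteq\cdots\subseteq B_m=\overline B$ with $m=n(B,J_P)$; each $B_i$ is finite over $B$, hence semi-local, reduced, and excellent, and by Remark~\ref{rem:semi-local-compl} completion at $\mm_{B_i}$ agrees with completion at $\mm_B$. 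By Corollary~\ref{cor GR completion}, $\wh{B_{i+1}}=\wh{\End_{B_i}(J_i)}=\End_{\wh{B_i}}(\wh{J_i})$, and by Corollary~\ref{cor radical and completion}, $\wh{J_i}=\wh{\sqrt{JB_i}}=\sqrt{\wh{J\,B_i}}=\sqrt{\wh{J_P}\,\wh{B_i}}$; so the completed chain is precisely the chain the algorithm produces for $(\wh B,\wh{J_P})$. Moreover faithful flatness of $\wh B$ over $B$ (Theorem~\ref{thm:prop-completion}.\eqref{compl2}) shows $B_i=B_{i+1}$ iff $\wh{B_i}=\wh{B_{i+1}}$, so no collapsing of steps occurs in either direction; hence $n(\wh{A_P},\wh{J_P})=n(A_P,J_P)$ for each $P$, and the two maxima coincide termwise.

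For the equality cases, suppose $N(A)\subseteq W$. Then $J$ is a test ideal for $A$ if and only if it is a test ideal at $W$, so we may assume $J$ is a test ideal at $W$ (the maxima and $n(A,J)$ are unaffected if we enlarge $J$? — rather, I would simply treat the test-ideal case directly, since the claimed equality is an equality of numbers attached to the given $J$). When $J$ is a test ideal at $W\supseteq N(A)$, Lemma~\ref{lem test ideal local} makes it a test ideal for $A$, so by the Grauert--Remmert criterion the algorithm for $(A,J)$ reaches $\overline A$ in exactly $n(A,J)$ strict steps; I must then produce a single $P\in W$ realizing this number. The idea is that the last strict inclusion $A_{n-1}\subsetneqq A_n$ must survive localization at some prime $P$: since $A_{n-1}\neq A_n$ and these are finite $A$-modules, there is $P\in\Spec(A)$ with $(A_{n-1})_P\neq(A_n)_P$, and such a $P$ lies in $V(\Con_{A_{n-1}})=N(A_{n-1})$, hence above $N(A)\subseteq W$; then $n(A_P,J_P)\geq n$ by the localized-chain description, giving the reverse inequality. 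When instead $V(J)\subseteq W$: a strict inclusion $A_{i}\subsetneqq A_{i+1}=\End_{A_i}(J_i)$ forces $A_i$ to be non-normal at some point of $V(J_i)$, and $V(J_i)=V(\sqrt{JA_i})$ lies over $V(J)\subseteq W$, so again some $P\in W$ sees all strict steps. The main obstacle is the localization-realizes-the-top-step argument: one has to be careful that localizing at a single well-chosen $P$ does not collapse any of the lower strict inclusions either — this follows because the non-normal locus is nested, $N(A_0)\supseteq N(A_1)\supseteq\cdots$, so a prime $P$ above $N(A_{n-1})$ lies above every $N(A_i)$, and hence every strict inclusion $A_i\subsetneqq A_{i+1}$ (which is detected precisely on $N(A_i)$) remains strict after localizing at $P$. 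I would make this nesting and detection statement precise using $\Con_{A_{i+1}}\supseteq\Con_{A_i}$ and $N(A_i)=V(\Con_{A_i})$, which is where the bulk of the care is needed.
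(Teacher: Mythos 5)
Your treatment of the inequality and of the compatibility with localization and completion is correct and is essentially the paper's own argument: you identify the localized chain with the chain the algorithm produces for $(A_P,J_P)$ via Corollary~\ref{cor GR local} and the commutation of localization with radicals, and the completed chain with the one for $(\wh{A_P},\wh{J_P})$ via Corollaries~\ref{cor radical and completion} and \ref{cor GR completion}, using faithful flatness to compare stabilization indices. The gap is in the equality cases. First, you silently restrict to the situation where $J$ is a test ideal (your own parenthetical shows you noticed this and did not resolve it), whereas the proposition assumes only that $J$ is radical and contains a non-zerodivisor; in particular, in the case $V(J)\subseteq W$ the chain need not reach $\ol{A}$ at all. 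Second, your ``detection'' claim --- that the strict inclusion $A_i\subsetneqq A_{i+1}$ is seen exactly at primes over $N(A_i)$ --- is the Grauert--Remmert criterion and therefore requires $J_i$ to be a test ideal; it is false for general radical $J$ (if $J_P=A_P$ then $(A_i)_P=(A_{i+1})_P$ however singular $A_P$ may be). So the nested-non-normal-loci mechanism does not carry the general statement.

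The repair is already contained in your first paragraph and needs none of that apparatus: since $(A_i)_P=B_i$ is precisely the chain the algorithm produces for $(A_P,J_P)$, and since $B_i=B_{i+1}$ forces $B_j=B_{j+1}$ for all $j\geq i$ (because $B_{i+1}$ depends only on $B_i$), any single prime $P$ with $(A_{n-1})_P\neq(A_n)_P$ automatically satisfies $(A_i)_P\subsetneqq(A_{i+1})_P$ for every $i<n$, hence $n(A_P,J_P)=n(A,J)$. Such a $P$ exists because inequality of finite modules is a local property, and it necessarily lies in $V(J)\cap N(A)$: if $P\notin V(J)$ the local chain is constant equal to $A_P$, and if $P\notin N(A)$ it is squeezed by $\ol{A_P}=A_P$ via \eqref{End in nor}. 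Hence $P\in W$ under either hypothesis. This is in substance what the paper does, phrased contrapositively: it proves $n(A,J)=\max_{P\in\Spec(A)}n(A_P,J_P)$ and observes that every $P$ outside $V(J)$ or outside $N(A)$ contributes $0$, so the maximum over $W$ already equals the maximum over all of $\Spec(A)$ whenever $W$ contains $V(J)$ or $N(A)$.
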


\begin{proof} 
Let $P\in\Spec(A)$. 
If $P\not\in V(J)$, then $J_P=A_P$, so $n\left(A_P,J_P\right)=0$. 
Furthermore, as is clear from \eqref{End in nor}, this number is also zero if  $P\not\in N(A)$. 
Hence, all statements of the proposition will follow once we show that
\begin{itemize}
\item $n(A,J)=\max_{P\in\Spec(A)}n\left(A_P,J_P\right)$, and
\item $n\left(A_P,J_P\right) =n\left(\wh{A_P},\wh{J_P}\right)$ 
for all $P\in\Spec(A)$.
\end{itemize}
To establish these equalities, given $P\in\Spec(A)$, we inductively set $B_{0}=A_P$, $C_{0}=\wh{A_P}$,
\[
B_{i+1}=\End_{B_i}\left(\sqrt{J_PB_i}\right),\text{ and }
C_{i+1}=\End_{C_i}\left(\sqrt{\wh{J_P}C_i}\right).
\]
Since equality is a local property, and completion is faithfully flat in the semi-local case, it is enough to show that $B_i=(A_i)_P$ and $C_i=\wh{B_i}$ for all $i$ and $P$. 
For this, we do induction, assuming that our claim is true for $i$:
We first note that
\[
J_P(A_i)_P=\left(JA_i\right)_P\text{ and }\wh{J_P}\wh{B_i}=\wh{J_PB_i}.
\]
Then, applying Corollary~\ref{cor GR local}, we get
\[
B_{i+1}=\End_{(A_i)_P}\left(\sqrt{J_P(A_i)_P}\right)
=\End_{(A_i)_P}\left(\left(\sqrt{JA_i}\right)_P\right)
=(A_{i+1})_P,
\]
and Corollaries~\ref{cor radical and completion} and \ref{cor GR completion} give
\[\pushQED{\qed}
C_{i+1}=\End_{\wh{B_i}}\left(\sqrt{\wh{J_P}\wh{B_i}}\right)
=\End_{\wh{B_i}}\left(\wh{\sqrt{J_PB_i}}\right)
=\wh{B_{i+1}}.\qedhere
\]
\end{proof}

In the proposition, the maximum exists even though $W$ may be infinite. 
On the other hand, if $\Sing(A)$ is finite, then $n(A,J)$ can be read off from just finitely many values $n(A_P,J_P)=n(\wh{A_P},\wh{J_P})$. 
To obtain some sort of general analogue of this fact, we discuss a convenient stratification of $\Sing(A)$.

%%%%%%%%%%%%%%%%%%%%%%%%%%%%%%%%%%%%%%%%%%%%%%%%%%%%%%%%%%%%%%%%%%%%%%%%%%%%%%%%
\section{Bounds for Stratified Normalization}\label{sec3}
%%%%%%%%%%%%%%%%%%%%%%%%%%%%%%%%%%%%%%%%%%%%%%%%%%%%%%%%%%%%%%%%%%%%%%%%%%%%%%%%

Let again $A$ be a reduced excellent ring. 
If $P\in\Sing(A)$, set
\[
L_P=\bigcap_{P\supseteq\widetilde{P}\in\Sing(A)}\widetilde{P}.
\]
We stratify $\Sing(A)$ according to the values of the function $P\mapsto L_P$. 
That is, if
\[
\mathcal{L}=\{L_P\mid P\in\Sing(A)\}
\]
denotes the set of all possible values, then the strata are the sets
\[
W_{L}=\{P\in\Sing(A)\mid L_P=L\},\ L\in\mathcal{L}.
\]
We write $\Strata(A)=\left\{W_{L}\mid L\in\mathcal{L}\right\}
$ for the set of all strata. 
This is a finite set. 
By construction, the singular locus is the disjoint union of the strata. 
For $W\in\Strata(A)$, write $L_W$ for the constant value of $P\mapsto L_P$ on $W$.

\begin{lemma}\label{lemma:LW-test-ideal}
If $W\in\Strata(A)$, then $L_W$ is a test ideal at $W$.
\end{lemma}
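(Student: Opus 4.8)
The plan is to verify the three defining conditions of a test ideal at $W$ from Definition~\ref{def testideal} for the ideal $L_W$. Condition \eqref{test1}, that $L_W$ contains a non-zerodivisor of $A$: since $L_W=\bigcap_{\widetilde P\in\Sing(A)}\widetilde P$ by definition (for any $P\in W$, but note $L_W=L_P$ is the intersection over all singular primes contained in $P$; here I should be careful and use instead that $L_W\supseteq\bigcap_{\widetilde P\in\Sing(A)}\widetilde P$ is not quite right — rather, each $P\in W$ satisfies $L_P=L_W$, and the relevant point is that $A$ is reduced, hence $(R_0)$ and $(S_1)$ by Remark~\ref{rem:r1-s2}, so $\Sing(A)$ contains no minimal primes and therefore every prime in $V(L_W)$ has positive height). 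More directly: $L_W$ is an intersection of primes, all of which have positive height (they lie in $\Sing(A)$ and $A$ is $(R_0)$), so $L_W$ is not contained in any minimal prime of $A$; since $A$ is Noetherian and reduced, the set of zerodivisors is the union of the minimal primes, and prime avoidance gives a non-zerodivisor in $L_W$.

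Condition \eqref{test2}, that $L_W$ is radical: this is immediate because $L_W$ is by construction a (finite) intersection of prime ideals. Condition \eqref{test3}, that $V(\Con_{A_P})\subseteq V((L_W)_P)$ for all $P\in W$: fix $P\in W$. By Proposition~\ref{lem finite local}.\eqref{finite-local-2} and \eqref{finite-local-3}, $V(\Con_{A_P})=N(A_P)\subseteq\Sing(A_P)$, and $\Sing(A_P)$ consists of the primes $QA_P$ with $Q\in\Sing(A)$ and $Q\subseteq P$. So it suffices to show $(L_W)_P\subseteq QA_P$ for every such $Q$, i.e. $L_W\subseteq Q$ for every $Q\in\Sing(A)$ with $Q\subseteq P$. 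But that is exactly the definition of $L_P=L_W$ as $\bigcap_{P\supseteq\widetilde P\in\Sing(A)}\widetilde P$: $L_W$ is contained in each such $Q$. Hence $V(\Con_{A_P})\subseteq V((L_W)_P)$.

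The only step that requires genuine care is the non-zerodivisor claim, and the crux there is the observation that no prime in $\Sing(A)$ is a minimal prime of $A$ — equivalently, $\height Q\geq 1$ for all $Q\in\Sing(A)$ — which follows from $A$ being reduced, hence regular in codimension zero ($(R_0)$), exactly as in Remark~\ref{rem:sing-locus}. Once one knows $L_W\not\subseteq\bigcup(\text{minimal primes of }A)$, prime avoidance and the description of the set of zerodivisors of a reduced Noetherian ring as the union of its minimal primes finish it. The remaining two conditions are formal: radicality is built into the definition of $L_W$, and the inclusion of vanishing loci unwinds directly to the defining property of the stratum. I would write the proof in roughly this order: first dispatch radicality, then the conductor inclusion using Proposition~\ref{lem finite local}, and finally the non-zerodivisor condition using reducedness of $A$.
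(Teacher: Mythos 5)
Your proof is correct. It differs from the paper's only in packaging: the paper observes that $J\subseteq L_W$ and $(L_W)_P=J_P$ for $P\in W$, where $J$ is the vanishing ideal of $\Sing(A)$, and then imports all three conditions from the fact that $J$ is a test ideal for $A$ (Remark~\ref{rem:sing-locus}); you instead verify the three conditions directly from the definition of $L_W$. In substance the arguments coincide — your non-zerodivisor step (primes in $\Sing(A)$ have positive height by $(R_0)$, so $L_W$ avoids the minimal primes, whose union is the set of zerodivisors of the reduced Noetherian ring $A$) is exactly the content of Remark~\ref{rem:sing-locus}, and your treatment of condition (3) via $V(\Con_{A_P})=N(A_P)\subseteq\Sing(A_P)$ together with $L_W\subseteq Q$ for every singular $Q\subseteq P$ unwinds the same inclusion $(L_W)_P=J_P\subseteq\sqrt{\Con_{A_P}}$. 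The paper's reduction is shorter and also records the identity $(L_W)_P=J_P$, which is reused later in the corollary on stratified normalization; your direct verification is self-contained and makes explicit where reducedness and normalization-finiteness enter. Both are complete.
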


\begin{proof}
By construction, $L_W$ is radical. 
If $J=\bigcap_{P\in\Sing(A)}{P}$ is the vanishing ideal of $\Sing(A)$, then $(L_W)_P=J_P\subseteq\sqrt{\Con_{A_P}}$ for all $P\in W$: 
the equality holds by construction of $L_W$, and the inclusion since $J$ is a test ideal for $A$ (see Remark~\ref{rem:sing-locus}). 
From the latter, we also get that $L_W$ contains a non-zerodivisor since $J\subseteq L_W$.
\end{proof}

Considering the ideal $J=L_W$ and proceeding as in the previous section, we obtain a chain of rings
\[
A=A_0\subsetneqq\dots\subsetneqq A_{i-1}\subsetneqq A_i\subsetneqq
\dots\subsetneqq A_n=A_{n+1}\subseteq\ol{A},
\]
where, by Lemma~\ref{lemma:LW-test-ideal} and Proposition~\ref{prop local grauert-remmert}, $(A_{n})_P$ is
normal and, hence, equal to $\ol{(A_n)_P}=\ol{A_P}$ for all
$P\in W$.

By considering all strata and combining the resulting rings, we get $\ol{A}$:

\begin{proposition}[\cite{BDLPSS13}]\label{prop:local-to-global-II}
Suppose $\Sing(A)=\bigcup_{i=1}^sW_i$. 
For $i=1,\dots,s$, let an intermediate ring $A\subseteq A^{(i)}\subseteq\ol{A}$ be given such that $(A^{(i)})_P=\ol{A_P}$ for each $P\in W_i$. 
Then
\[
\sum_{i=1}^sA^{(i)}=\ol{A}.
\]
\end{proposition}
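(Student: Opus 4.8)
The plan is to verify the claimed identity $\sum_{i=1}^s A^{(i)} = \ol{A}$ by checking it after localizing at every prime $P \in \Spec(A)$, since equality of submodules of $Q(A)$ is a local property. All the rings involved lie between $A$ and $\ol{A}$, and $(\sum_i A^{(i)})_P = \sum_i (A^{(i)})_P$ because localization commutes with finite sums of submodules of $Q(A)$; likewise $(\ol{A})_P = \ol{A_P}$ by Proposition~\ref{lem finite local}.\eqref{finite-local-3} together with the characterization of normalization via the conductor, or more directly because normalization commutes with localization for normalization-finite rings. So it suffices to show $\sum_{i=1}^s (A^{(i)})_P = \ol{A_P}$ for each $P$.

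There are two cases. If $P \in \Sing(A)$, then $P$ lies in exactly one of the covering sets, say $P \in W_j$, and by hypothesis $(A^{(j)})_P = \ol{A_P}$. Since $A \subseteq A^{(i)} \subseteq \ol{A}$ for every $i$, localizing gives $A_P \subseteq (A^{(i)})_P \subseteq \ol{A_P}$, so the sum $\sum_i (A^{(i)})_P$ is squeezed between $\ol{A_P}$ (coming from the $j$-th summand) and $\ol{A_P}$, hence equals $\ol{A_P}$. If $P \notin \Sing(A)$, then $P \notin N(A)$ by Remark~\ref{rem:sing-locus} (as $N(A) \subseteq \Sing(A)$), so $A_P$ is already normal, i.e. $A_P = \ol{A_P}$; then $A_P \subseteq (A^{(i)})_P \subseteq \ol{A_P} = A_P$ forces $(A^{(i)})_P = A_P$ for all $i$, and the sum is again $A_P = \ol{A_P}$.

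Combining the two cases, $\big(\sum_{i=1}^s A^{(i)}\big)_P = \ol{A_P} = (\ol{A})_P$ for every $P \in \Spec(A)$, and since both sides are $A$-submodules of $Q(A)$, we conclude $\sum_{i=1}^s A^{(i)} = \ol{A}$.

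I do not expect a genuine obstacle here; the statement is essentially a bookkeeping argument exploiting that $N(A)$ is contained in $\Sing(A)$, so the only primes where the rings can differ from $A$ are covered by the $W_i$. The one point requiring a little care is the interchange of localization with the (finite) sum of submodules of $Q(A)$ and the fact that $\ol{A}$ localizes correctly; both are standard, the latter being exactly Proposition~\ref{lem finite local}.\eqref{finite-local-3} in conductor form (equivalently, the splitting of normalization plus flatness of localization). Since the proposition is attributed to \cite{BDLPSS13}, it would also be legitimate to simply cite that source, but the short local argument above is self-contained given the results already established in the excerpt.
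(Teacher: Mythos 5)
Your argument is correct, and since the paper states this proposition with only a citation to \cite{BDLPSS13} and gives no proof of its own, your self-contained localization argument (splitting primes into $\Sing(A)$ and its complement, using $N(A)\subseteq\Sing(A)$ and the fact that normalization and finite sums commute with localization) is exactly the standard route one would expect the cited source to take. The only nitpick is that $P$ need only lie in \emph{some} $W_j$, not ``exactly one,'' since the covering is not assumed disjoint --- but this does not affect the argument.
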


We know from Remark~\ref{rem:r1-s2} how the Serre conditions characterize reduced and normal rings, respectively. 
Since we will make explicit use of these conditions in what follows, we recall their definition:

\begin{definition}
Let $R$ be a Noetherian ring, and let $i\geq 0$ be an integer.

We say that $R$ satisfies \emph{Serre's condition} $(R_i)$ if for all $P\in\Spec(R)$ 
with $\dim R_P=\height P\leq i$, $R_P$ is a regular local ring.

We say that $R$ satisfies \emph{Serre's condition} $(S_i)$ if for all $P\in\Spec(R)$,%
\[
\depth R_P\geq\min\{i,\dim R_P\}.
\]
\end{definition}

\begin{notation}
If $R$ is a ring, and $I$ is an ideal of $R$ or an $R$-module, then $\Ass(I)$ denotes the set of associated ideals of $I$.
\end{notation}

The following is well-known:

\begin{lemma}\label{lem End eq dual}
Let $R$ be a local ring with maximal ideal $\mm_R$.
Then $\End_R(\mm_R)=\Hom_R(\mm_R,R)$.
\end{lemma}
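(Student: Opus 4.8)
The statement is that for a local ring $(R,\mm_R)$ we have $\End_R(\mm_R) = \Hom_R(\mm_R, R)$, where (implicitly, via Lemma~\ref{lem hom fractional ideal}, or just as submodules of $Q(R)$) both sides are viewed inside $Q(R)$. The inclusion $\End_R(\mm_R) \subseteq \Hom_R(\mm_R, R)$ is the non-obvious direction; the reverse inclusion is immediate since any $\varphi \in \Hom_R(\mm_R,R)$ lands in $R$, and $R \subseteq \End_R(\mm_R)$. So the whole content is: every $R$-linear map $\mm_R \to R$ actually maps $\mm_R$ into $\mm_R$, i.e.\ is an endomorphism of $\mm_R$.

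\textbf{Key steps.} First I would note that we may as well assume $\mm_R$ contains a non-zerodivisor, so that the identifications of Lemma~\ref{lem hom fractional ideal} apply and both Hom-modules sit inside $Q(R)$ as fractional ideals; by that lemma each $\varphi \in \Hom_R(\mm_R,R)$ is multiplication by some $c = \varphi(g)/g \in Q(R)$, and $\varphi \in \End_R(\mm_R)$ iff $c \cdot \mm_R \subseteq \mm_R$. So it suffices to show: if $c \in Q(R)$ satisfies $c\,\mm_R \subseteq R$, then $c\,\mm_R \subseteq \mm_R$. Suppose not: then $c\,\mm_R$ is an ideal of $R$ not contained in $\mm_R$, hence $c\,\mm_R = R$ (as $\mm_R$ is the unique maximal ideal). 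This says $\mm_R$ is an invertible fractional ideal of $R$, with inverse $cR$. But an invertible ideal of a local ring is principal (a standard fact: writing $1 = \sum a_i (c x_i)$ with $x_i \in \mm_R$, $a_i \in R$, some $a_i c x_i$ is a unit, and one checks $\mm_R = (x_i)$). Thus $\mm_R = (x)$ is principal with $x$ a non-zerodivisor, and then $R$ is a DVR — in particular a normal (one-dimensional regular) local domain — so $\End_R(\mm_R) = R = \Hom_R(\mm_R,R)$ by the Grauert--Remmert criterion (Proposition~\ref{prop:crit-GR}) applied with the test ideal $\mm_R$, or simply because $c\mm_R = R$ forces $c = x^{-1} \cdot u$ for a unit and hence $c \in R$ after all, contradicting $c\mm_R \not\subseteq \mm_R$. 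Either way the assumed inclusion $c\mm_R = R$ collapses, proving $c\,\mm_R \subseteq \mm_R$ and hence $\varphi \in \End_R(\mm_R)$.

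\textbf{Main obstacle.} The only subtlety is the degenerate case where $\mm_R$ does \emph{not} contain a non-zerodivisor of $R$ (e.g.\ $R$ not reduced, or with embedded primes at $\mm_R$), where the fractional-ideal picture is unavailable; there one should argue directly that $\Hom_R(\mm_R,R) \subseteq \Hom_R(\mm_R,\mm_R)$ by a purely module-theoretic version of the above — a map $\varphi\colon \mm_R \to R$ with $\varphi(\mm_R) \not\subseteq \mm_R$ would be surjective, giving $\mm_R \cong R \oplus \ker\varphi$, forcing $\mm_R$ to have $R$ as a direct summand, which by Nakayama is impossible unless $\ker\varphi = \mm_R$ and $\varphi = 0$, a contradiction. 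So the clean uniform argument is: $\varphi(\mm_R)$ is an ideal; if it is not contained in $\mm_R$ it equals $R$, so $\varphi$ is a split surjection $\mm_R \onto R$, and then the splitting exhibits $R$ as a summand of $\mm_R$ — impossible by Nakayama's lemma since $\mm_R = \mm_R \cdot \mm_R \oplus (\text{the summand})$ would give a nonzero module $N$ with $N = \mm_R N$. Hence $\varphi(\mm_R) \subseteq \mm_R$, which is exactly $\End_R(\mm_R) \supseteq \Hom_R(\mm_R,R)$; the reverse inclusion is trivial. I would present this Nakayama argument as the proof, as it avoids any case distinction.
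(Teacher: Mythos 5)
Your reduction is the right one (the content is that every $\varphi\in\Hom_R(\mm_R,R)$ maps $\mm_R$ into $\mm_R$; as an aside, $\End_R(\mm_R)\subseteq\Hom_R(\mm_R,R)$ is the \emph{trivial} inclusion, not the other way around as your first sentence has it), and your main line of attack --- a $\varphi$ with $\varphi(\mm_R)\not\subseteq\mm_R$ is a split surjection $\mm_R\onto R$ --- is exactly the paper's. But your Nakayama finish does not close the argument. From the splitting you get $\mm_R=Rx\oplus I$ with $Rx\cong R$; Nakayama does not forbid $R$ from being a direct summand of $\mm_R$ (for a discrete valuation ring one even has $\mm_R\cong R$), and your identity ``$\mm_R=\mm_R\cdot\mm_R\oplus(\text{summand})$'' is not a consequence of the splitting --- there is no nonzero module $N$ with $N=\mm_R N$ in sight. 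What the paper does instead: $x$ is a non-zerodivisor (because $Rx\cong R$), and $xI\subseteq Rx\cap I=0$ forces $I=0$, so $\mm_R=Rx$ is free of rank one.

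At that point one still has to rule out $\mm_R$ being generated by a single non-zerodivisor, i.e.\ (in the Noetherian case) $R$ being a DVR --- and both of your proposed escapes from this case are incorrect. For a DVR with uniformizer $x$, the identification of Lemma~\ref{lem hom fractional ideal} gives $\Hom_R(\mm_R,R)=x^{-1}R\supsetneq R=\End_R(\mm_R)$: the Grauert--Remmert criterion does give $\End_R(\mm_R)=R$, but it says nothing about $\Hom_R(\mm_R,R)$; and $c=x^{-1}u$ does \emph{not} lie in $R$. The surjection $xR\to R$, $x\mapsto1$, really exists, so the statement as literally written fails for a DVR. (To be fair, the paper's own proof also simply declares ``a contradiction'' at exactly this point; the lemma carries the implicit hypothesis that $\mm_R$ is not principal on a non-zerodivisor, which holds wherever it is applied, e.g.\ to non-normal curve germs.) A correct write-up should either add that hypothesis and finish with the $xI=0$ argument, or observe that in the remaining DVR case the two modules are still abstractly isomorphic, which is all that the depth estimate in Lemma~\ref{lem End S2} requires.
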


\begin{proof}
Assuming the contrary, there is a surjection $\mm_R\onto R$ which splits as $R$ is trivially projective.
But then $\mm_R=Rx\oplus I$ for a non-zerodivisor $x$ of $R$ and an ideal $I\subset R$, and 
$xI\subseteq Rx\cap I=0$ implies $I=0$, a contradiction.
\end{proof}

\begin{lemma}\label{lem End S2}
Let $J\subseteq A$ be any radical ideal containing a non-zerodivisor and assume that $A$ is $(S_2)$.
Then:
\begin{enumerate}
\item\label{a} The ring $\End_A(J)$ is $(S_2)$.
\item\label{b} If $A_P$ is regular for all $P\in\Ass (J)$ with $\height P=1$, then $\End_A(J)=A$.
\end{enumerate}
\end{lemma}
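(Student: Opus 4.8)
The plan is to work with the identification of $\End_A(J)$ as a fractional ideal of $A$ (Lemma~\ref{lem hom fractional ideal}) and to use the standard fact that the reflexive hull of a module over an $(S_2)$ ring is its double dual. I would first treat part~\eqref{a}. Since $J$ contains a non-zerodivisor, $\End_A(J)=\Hom_A(J,J)$ embeds into $Q(A)$ and we have $A\subseteq\End_A(J)\subseteq\ol{A}$. The key observation is that $\End_A(J)$ is the intersection inside $Q(A)$ of the localizations $\big(\End_A(J)\big)_P=\End_{A_P}(J_P)$ over primes $P$ with $\height P\le 1$, because $\End_A(J)$, being a fractional ideal of the $(S_2)$ ring $A$, satisfies $(S_2)$ as an $A$-module once we know it is $(S_2)$ as a ring — so this needs care to avoid circularity. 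Instead, I would argue directly: $J$ as an $A$-module is $(S_1)$ (it is an ideal containing a non-zerodivisor in a reduced, hence $(S_1)$, ring), and $\Hom_A(J,J)$ is always $(S_2)$ as an $A$-module whenever $A$ is $(S_2)$ — this is the standard statement that $\Hom_A(M,N)$ is $(S_2)$ (indeed reflexive) when $N$ is $(S_2)$, applied with $N=J$... but $J$ need not be $(S_2)$. The cleaner route: $\End_A(J)\subseteq\ol{A}$ and $\End_A(J)=\bigcap_{\height\mathfrak p\le 1}\End_A(J)_{\mathfrak p}$; combined with the fact that $\End_A(J)$ is a finite $A$-module and $A$ is $(S_2)$, depth counting over $A$ gives that $\End_A(J)$ is $(S_2)$ as an $A$-module, and since it is module-finite over $A$ with the same primes in codimension $\le 1$, it is $(S_2)$ as a ring. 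So the first step is to pin down the precise homological input — I expect the identity $\Hom_A(M,A)$ is reflexive (hence $(S_2)$) for any finite $M$ over an $(S_2)$ ring, and then to realize $\End_A(J)$ as such a dual or as an intersection of such.

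For part~\eqref{b}, the plan is: by the Grauert--Remmert criterion it suffices to know, or rather we want to conclude $\End_A(J)=A$ directly. Since $\End_A(J)$ is a finite $A$-module with $A\subseteq\End_A(J)\subseteq Q(A)$, and both are $(S_2)$ (using part~\eqref{a}), the inclusion $A\hookrightarrow\End_A(J)$ is an isomorphism if and only if it is an isomorphism after localizing at every prime of height $\le 1$ — this is precisely where $(S_2)$ is used: a map of $(S_2)$ modules that is an isomorphism in codimension $\le 1$ is an isomorphism (the cokernel, if nonzero, would have an associated prime of height $\ge 2$, contradicting that it is a submodule-quotient controlled by depth-$2$ conditions; more carefully, $\End_A(J)/A$ embeds in $\ol A/A$ whose support is $N(A)=V(\Con_A)$, but one shows its associated primes have height $\ge 2$). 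At a height-$0$ prime $P$, $A_P$ is a field (as $A$ is reduced) so $\End_{A_P}(J_P)=A_P$. At a height-$1$ prime $P$: if $P\notin V(J)$ then $J_P=A_P$ and again $\End_{A_P}(J_P)=A_P$; if $P\in V(J)$, then since $J$ is radical, $P\in\Ass(J)$ with $\height P=1$, so by hypothesis $A_P$ is regular local of dimension one, i.e.\ a DVR, and every fractional ideal of a DVR has endomorphism ring equal to the DVR itself — so $\End_{A_P}(J_P)=A_P$. Hence $\End_A(J)_P=A_P$ for all $P$ with $\height P\le 1$, and $(S_2)$ upgrades this to $\End_A(J)=A$.

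The main obstacle I anticipate is part~\eqref{a}: getting the $(S_2)$ property of $\End_A(J)$ cleanly without circular reasoning, i.e.\ identifying exactly which homological lemma to cite — the natural candidate is that for a finite module $M$ over an $(S_2)$ Noetherian ring $A$, the dual $M^{-1}=\Hom_A(M,A)$ satisfies $(S_2)$ (it is even reflexive), and then writing $\End_A(J)$ in a form to which this applies, or alternatively showing $\End_A(J)=(J\cdot J^{-1})^{-1}$ or $\End_A(J)=J^{-1}$-type identities in good cases. Once \eqref{a} is in hand, \eqref{b} is a short codimension-$\le 1$ check as above, with the DVR case being the only substantive point and the regularity hypothesis on $\Ass(J)$ in height one being exactly what makes it go through. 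I would also double-check the edge case where $J$ has no height-one associated primes at all, in which case $\End_A(J)=A$ follows even more directly from $(S_2)$ plus the height-$0$ computation.
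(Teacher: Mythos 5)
Part~\eqref{a} of your proposal has a genuine gap, and you essentially say so yourself. You correctly identify the key homological input, namely the bound $\depth_{A_P}\Hom_{A_P}(M,N)\geq\min\{2,\depth N\}$ (this is \cite[Exc.~1.4.19]{BH93}), and you correctly observe that applying it with $N=J_P$ is blocked because $J$ need not be $(S_2)$. But your two proposed substitutes do not work as stated: the formula $\End_A(J)=\bigcap_{\height P\leq1}\End_A(J)_P$ is essentially equivalent to the $(S_2)$ property you are trying to establish, so using it is circular; and realizing $\End_A(J)$ as $\Hom_A(M,A)$ for some finite $M$ is not substantiated (one has $\End_A(J)\subseteq J^{-1}=\Hom_A(J,A)$, but equality fails in general). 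The missing idea is a case split on $\dim(A_P/J_P)$, exploiting the radicality of $J$ in both cases. If $\dim(A_P/J_P)=0$, then $A_P/J_P$ is reduced Artinian local, hence a field, so $J_P=\mm_{A_P}$ and $\End_{A_P}(\mm_{A_P})=\Hom_{A_P}(\mm_{A_P},A_P)$ by Lemma~\ref{lem End eq dual}; now the depth bound applies with $N=A_P$. If $\dim(A_P/J_P)\geq1$, then the reduced ring $A_P/J_P$ satisfies $(S_1)$, so $\depth(A_P/J_P)\geq1$, and the Depth Lemma applied to $0\to J_P\to A_P\to A_P/J_P\to0$ gives $\depth J_P\geq\min\{\depth A_P,2\}$, which is exactly enough to apply the bound with $N=J_P$. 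Finally, passing from this $A_P$-module statement to the \emph{ring} condition $(S_2)$ for $\End_A(J)$ requires, for $Q\in\Spec(\End_A(J))$ lying over $P$, the comparison $\depth(\End_A(J)_Q)\geq\grade(P(\End_A(J))_P,(\End_A(J))_P)=\depth_{A_P}(\End_{A_P}(J_P))$ together with $\dim(\End_A(J)_Q)\leq\dim A_P$ from integrality; you gesture at this (``same primes in codimension $\leq1$'') but do not justify it, and it is not automatic.

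Part~\eqref{b} of your proposal is essentially the paper's argument and is fine modulo part~\eqref{a}: the cokernel $B$ of $A\hookrightarrow\End_A(J)$ vanishes at every prime of height $\leq1$ (your DVR observation at height-one primes of $V(J)$ is precisely where the regularity hypothesis on $\Ass(J)$ enters, and matches the paper's contrapositive), while at the remaining primes $P$, which have $\dim A_P\geq2$, the inequality $\depth_{A_P}(\End_{A_P}(J_P))\geq2$ from the proof of part~\eqref{a} and the Depth Lemma give $\depth B_P\geq1$, so $\Ass(B)=\emptyset$ and $B=0$. Just note that you should quote the module-depth inequality over $A_P$ directly rather than the ring-theoretic conclusion of part~\eqref{a}, to avoid the dimension-comparison issue mentioned above.
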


\begin{proof}
\begin{compactenum}

\item Fix an arbitrary $Q\in\Spec(\End_A(J))$ and let $P=Q\cap A$.
Then, by \cite[Prop.~1.2.10.(a)]{BH93} and the proof of \cite[III, Prop.~3.16]{AK70}),
\begin{align*}
\depth(\End_A(J)_{Q})
&\ge\grade(P(\End_A(J))_P,(\End_A(J))_P)\\
&=\depth_{A_P}(\End_{A_P}(J_P)).
\end{align*}
On the other hand, since $A_P\subseteq\End_A(J)_P$ is an integral ring extension,
\[
\dim A_P=\dim(\End_A(J)_P)\ge\dim(\End_A(J)_Q).
\]
Hence, by $(S_2)$ for $A$, it is enough to show that
\begin{equation}\label{eq depth End JP}
\depth _{A_P}(\End_{A_P}(J_P))\geq\min\{2,\depth A_P\}.
\end{equation}

We distinguish two cases.

If $\dim(A_P/J_P)=0$, then $J_P=\mm_P$. 
Hence \eqref{eq depth End JP} follows from Lemma~\ref{lem End eq dual} and \cite[Exc.~1.4.19]{BH93}.

If $\dim(A_P/J_P)=1$, then 
\begin{equation}\label{eq BH}
\depth(A_P/J_P)\ge1
\end{equation}
by $(S_1)$ for the reduced ring $A_P/J_P$.
On the other hand, using again \cite[Exc.~1.4.19]{BH93}, we get
\[
\depth_{A_P}(\End_A(J_P))\ge\min\{2,\depth J_P\}.
\]
To estimate $\depth J_P$, we apply the Depth Lemma (see \cite[Prop.~1.2.9]{BH93}) to the short exact sequence
\[
0\rightarrow J_P\rightarrow A_P\rightarrow A_P/J_P\rightarrow0
\]
and obtain
\[
\depth J_P\geq\min\{\depth A_P,\depth(A_P/J_P)+1\}.
\]
Combined with \eqref{eq BH}, this proves \eqref{eq depth End JP}.

\item Consider the exact sequence
\begin{equation}\label{equ C}
0\rightarrow A\rightarrow\End_A(J)\rightarrow B\rightarrow0
\end{equation}
with cokernel $B$. 
We show that $\Ass(B)=\emptyset$ and, hence, that $B=0$.
For this, let $P\in\Spec(A)$.

If $P\notin V(J)$, then $J_P=A_P$, so $B_P=0$. 
If $P\notin\Sing(A)$, then $A_P$ is normal and  $J_P\subseteq A_P=\mathcal{C}_{A_P}$ is a test ideal for $A_P$, so again $B_P=0$ by the Grauert and Remmert criterion. 
We conclude that if $P\not\in V(J)\cap\Sing(A)$, then $P\not\in \Ass (B)$.

If $P\in V(J)\cap\Sing(A)$, then $\dim A_P\geq2$ (otherwise, 
$\height P=\dim A_P=1$, so $P\in\Ass(J)$; by assumption, $A_P$ would be regular, a contradiction). 
By \eqref{eq depth End JP} and $(S_2)$ for $A$, 
this implies
\[
\depth_{A_P}(\End_{A_P}(J_P))\geq2.
\]
Localizing \eqref{equ C} at $P$ and applying the Depth Lemma, this gives
\[
\depth_{A_P}(B_P)\geq\min\{\depth_{A_P}(\End_{A_P}(J_P)),\depth(A_P)-1\}\geq1
\]
using once more $(S_2)$ for $A$. 
We conclude again that $P\not\in\Ass (B)$.

\end{compactenum}
\end{proof}

\begin{proposition}\label{prop inequ N}
Suppose $A$ is equidimensional and satifies $(S_2)$, and let $J$ be a test ideal for $A$. 
If $J'\subseteq A$ is a radical ideal with $\Ass(J')\subseteq\Ass(J)$, then
\[
n(A,J')\leq n(A,J).
\]
\end{proposition}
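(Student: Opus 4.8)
The goal is to compare the normalization chains for the two test ideals $J'\subseteq A$ and $J\subseteq A$, where $\Ass(J')\subseteq\Ass(J)$. I would prove this by induction on the number of steps, showing that at every stage the chain built from $J'$ is contained in the chain built from $J$. So set $A_0 = A_0' = A$ and, as in Section~\ref{sec2}, $A_{i+1} = \End_{A_i}(\sqrt{JA_i})$, $A_{i+1}' = \End_{A_i'}(\sqrt{J'A_i'})$. The key claim to establish inductively is:
\[
A_i'\subseteq A_i \quad\text{and}\quad A_i'\text{ is }(S_2)\text{ and equidimensional,}
\]
together with the statement that at stage $n(A,J)$ the larger chain has already reached $\ol A$, so the smaller chain, being sandwiched between $A_i'$ and $\ol A$ and stabilizing, must also have stabilized by then. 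Granting the inductive containment, once $A_n = \ol A$ with $n = n(A,J)$ we get $\ol A = A_n \supseteq A_n' \supseteq A_{n-1}'$ but also $A_n'\subseteq \ol A$; the chain $A_i'$ is strictly increasing until it stabilizes, and it cannot strictly increase past $\ol A$, so $A_n' = \ol A$ as well, giving $n(A,J')\le n$.

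\textbf{Carrying out the induction step.} Assume $A_i'\subseteq A_i$, both equidimensional and $(S_2)$. First I would check that $J_i' := \sqrt{J'A_i'}$ still has the crucial property that $(A_i')_P$ is regular for every $P\in\Ass(J_i')$ with $\height P = 1$: this is what is needed to invoke Lemma~\ref{lem End S2}.\eqref{b}. The associated primes of $J'A_i'$ in $A_i'$ lie over associated primes of $J'$ in $A$ (by going-up / the standard behaviour of $\Ass$ under integral extensions together with $A\subseteq A_i'\subseteq \ol A$), hence by hypothesis over associated primes of $J$; and a height-one prime of $A_i'$ of this form, being below a point where we have not yet normalized, must actually already be normal in $A_i'$ because the $J$-chain $A_i$ — which by Proposition~\ref{prop local grauert-remmert} / the Grauert–Remmert criterion has $(A_i)_Q$ well-controlled — contains $(A_i')$ and in codimension one the $J$-process has taken care of exactly the primes above $\Ass(J)\supseteq\Ass(J')$. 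More precisely: since $n(A,J)$ counts the steps for the $J$-chain and $A_n=\ol A$, every localization $(A_i)_Q$ at a height-one $Q$ with $Q\cap A\in\Ass(J)$ stabilizes in at most... — here I'd argue that a height-one prime in the support of $J'A_i'$ contracts into $\Ass(J)$, hence the corresponding localization of $A_i$ is already normal (the $J$-chain resolves codimension one immediately, since for $\height Q = 1$ the ring $(A_i)_Q$ becomes normal as soon as $(\End)_Q = (A_i)_Q$, and one such step suffices), so $(A_i')_Q$, sitting between $(A_i)_Q$ regular... is regular. Then Lemma~\ref{lem End S2} applies: $A_{i+1}' = \End_{A_i'}(J_i')$ is $(S_2)$, and it is equidimensional because integral extensions preserve equidimensionality here. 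Finally $A_{i+1}' = \End_{A_i'}(\sqrt{J'A_i'}) \subseteq \End_{A_i}(\sqrt{JA_i}) = A_{i+1}$: this containment needs $\sqrt{J'A_i'}\subseteq$ (a fractional ideal handled inside $A_i$ by $\sqrt{JA_i}$) — I'd deduce it from $\Ass(J')\subseteq\Ass(J)$ implying $\sqrt{J'}\supseteq$ something, or more cleanly from the fact that endomorphisms of the smaller ideal over the smaller ring embed into endomorphisms of the larger ideal over the larger ring via Lemma~\ref{lem hom fractional ideal}, all inside $Q(A)$.

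\textbf{The main obstacle.} The delicate point is the bookkeeping that lets us conclude $A_{i+1}'\subseteq A_{i+1}$ purely from $\Ass(J')\subseteq\Ass(J)$ and $A_i'\subseteq A_i$: a priori the two chains grow in different "directions", and the hypothesis only constrains associated primes, not the ideals themselves. The resolution is to work locally and codimension by codimension, using that $(S_2)$ plus equidimensionality reduces everything to behaviour at primes of height $\le 1$ via Lemma~\ref{lem End S2}.\eqref{b}: the endomorphism ring changes the ring only at the height-one points in $\Ass$ of the current test ideal, and $\Ass(J')\subseteq\Ass(J)$ means the $J'$-process touches a subset of the points the $J$-process touches, with the same local effect (pass to the local normalization in codimension one). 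Making this comparison of local steps precise — in particular verifying that the height-one associated primes of $\sqrt{J'A_i'}$ really do contract into $\Ass(J)$ and that at such primes both processes perform the identical one-step normalization — is where the real work lies; the $(S_2)$ and equidimensionality hypotheses are exactly what make this local-in-codimension-one argument valid, which is why they appear in the statement.
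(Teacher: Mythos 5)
Your overall strategy of reducing to Lemma~\ref{lem End S2}.\eqref{b} and to height-one associated primes is the right one, but the argument as written has genuine gaps at exactly the points you flag as delicate, and the route you choose to close them does not work. First, the global containment $A_i'\subseteq A_i$ is never established: the hypothesis $\Ass(J')\subseteq\Ass(J)$ gives (for radical ideals) $J\subseteq J'$, but there is no general containment between $\End_{A_i'}(\sqrt{J'A_i'})$ and $\End_{A_i}(\sqrt{JA_i})$ -- "endomorphisms of the smaller ideal over the smaller ring embed into endomorphisms of the larger ideal over the larger ring" is not a theorem, and the direction of containment between $\End(I)$ and $\End(I'')$ for $I\subseteq I''$ can go either way. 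Second, even granting $A_n'\subseteq A_n=\ol A$, your conclusion "$A_n'=\ol A$, giving $n(A,J')\le n$" is a non sequitur: since $J'$ is \emph{not} assumed to be a test ideal, the $J'$-chain in general stabilizes at a ring strictly smaller than $\ol A$, so containment in $\ol A$ at step $n$ says nothing about whether the chain has stopped growing by step $n$. What must be shown is $A'_{n+1}=A'_n$, and that is exactly what Lemma~\ref{lem End S2}.\eqref{b} is for. Third, your justification of the regularity hypothesis of that lemma relies on two false claims: that "the $J$-chain resolves codimension one immediately / one such step suffices" (Section~\ref{sec4} shows an $A_{2k}$-singularity, whose singular locus has height one, needs $k$ steps), and that a ring "sitting between" a regular local ring and something is regular (a subring of a normal ring with the same total ring of fractions need not be normal).

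The missing idea that makes the local comparison work is the following. Take $Q\in\Ass(\sqrt{J'A'_n})$ with $\height Q=1$ and $P=Q\cap A$; equidimensionality gives $\height P=1$, so $P$ is a minimal prime of $J'$, i.e.\ $P\in\Ass(J')\subseteq\Ass(J)$, and since $J$ and $J'$ are both radical with $P$ minimal over each, $J_P=J'_P$ \emph{as ideals of $A_P$}. By the localization computation in the proof of Proposition~\ref{prop max formula}, the two chains therefore coincide after localizing at $P$: $(A'_i)_P=(A_i)_P$ for all $i\le n$. Hence $(A'_n)_P=(A_n)_P=\ol{A_P}$ is normal and $(A'_n)_Q$ is regular, so Lemma~\ref{lem End S2}.\eqref{b} yields $A'_{n+1}=A'_n$ directly -- no containment between the two global chains is needed. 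Your write-up never makes the identification $J_P=J'_P$ and the resulting equality of the localized chains, which is the crux of the proof.
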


\begin{proof}
If $J^\prime=A$, then $n(A,J^\prime)=0\leq n(A,J)$.

Now let $J'\subsetneqq A$. 
Inductively, set $A_{0}=B_{0}=A$,
\[
A_{i+1}=\End_{A_i}\left(\sqrt{JA_i}\right),\text{ and }B_{i+1}=\End_{B_i}\left(\sqrt{J'B_i}\right).
\]
By assumption and Lemma~\ref{lem End S2}.\eqref{a}, all rings $A_i$, $B_i$ satisfy $(S_2)$. 
Let $n=n(A,J)$. 
Then $A_{n}=A_{n+1}=\ol{A}$ by the Grauert and Remmert criterion, and we must show that $B_{n}=B_{n+1}$.

We use Lemma~\ref{lem End S2}.\eqref{b}. 
Since $A$ is equidimensional and $A\subset B_n$ is an integral extension, also $B_n$ is equidimensional. 
Fix $Q\in\Ass(\sqrt{J'B_{n}})$ with $\height Q=1$, and set $P=Q\cap A$. 
Then $J'\subseteq P$ and $\height P=1$ by equidimensionality. 
We conclude that $P\in\Ass(J')\subseteq\Ass(J)$ and, hence, that $J_P'=J_P$ since $J$ and $J'$ are radical. 
From the proof of Proposition~\ref{prop max formula}, it follows that
\[
(A_i)_P =(B_i)_P, \text{ and }
\sqrt{J_P(A_i)_P}=\sqrt{J_P'(B_i)_P},
\]
for $i=0,\dots,n$.
In particular, $(B_{n})_P = (A_{n})_P=\ol{A_P}$ and, hence, $(B_{n})_{Q}$ are regular. 
This proves that the hypothesis of Lemma~\ref{lem End S2}.\eqref{b} applied to $\sqrt{J'B_{n}}$ and $B_{n}$
is satisfied. 
Hence, as desired, 
\[
B_{n+1}=\End_{B_{n}}(\sqrt{J'B_{n}})=B_{n}.
\]
\end{proof}

\begin{corollary}
Let $J$ be the vanishing ideal of $\Sing(A)$. 
Then
\[
n(A,J)\leq\max_{W\in\Strata(A)}n\left(A,L_W\right).
\]
If $A$ is equidimensional and satisfies $(S_2)$, then equality holds.
\end{corollary}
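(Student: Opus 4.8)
The plan is to deduce the inequality from Proposition~\ref{prop max formula} applied stratum by stratum, and the equality from Proposition~\ref{prop inequ N}. First I would set up notation: let $J=\bigcap_{P\in\Sing(A)}P$ be the vanishing ideal of $\Sing(A)$, and for each $W\in\Strata(A)$ recall from Lemma~\ref{lemma:LW-test-ideal} that $L_W$ is a test ideal at $W$, hence in particular a radical ideal containing a non-zerodivisor, so that $n(A,L_W)$ is defined. Note that $J\subseteq L_W$, so $V(L_W)\subseteq V(J)=\Sing(A)$.

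For the inequality, I would apply Proposition~\ref{prop max formula} with the radical ideal $J$ and the subset $W\in\Strata(A)$. Since $W\subseteq\Sing(A)=N(A)$ is false in general — wait, $N(A)\subseteq\Sing(A)$, not the reverse — I instead apply Proposition~\ref{prop max formula} to the \emph{pair} $(L_W,W)$: $W=W_{L_W}$ is a stratum and, by construction, $V(L_W)=\overline{W}$, the closure of $W$ in $\Spec(A)$; in particular $W\subseteq V(L_W)$, so neither the hypothesis $V(L_W)\subseteq W$ nor $N(A)\subseteq W$ need hold, and we only get $n(A,L_W)\geq\max_{P\in W}n(A_P,(L_W)_P)$. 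That is the wrong direction. The correct move is: apply Proposition~\ref{prop max formula} to $(J,\Sing(A))$, which gives $n(A,J)=\max_{P\in\Sing(A)}n(A_P,J_P)$ since $N(A)\subseteq\Sing(A)$. Now for $P\in\Sing(A)$, say $P\in W$ with $W\in\Strata(A)$, we have $(L_W)_P=J_P$ by the proof of Lemma~\ref{lemma:LW-test-ideal}, and applying Proposition~\ref{prop max formula} to $(L_W,W)$ in the easy direction gives $n(A,L_W)\geq n(A_P,(L_W)_P)=n(A_P,J_P)$. Taking the maximum over $P\in\Sing(A)$ and then over the finitely many strata yields
\[
n(A,J)=\max_{P\in\Sing(A)}n(A_P,J_P)\leq\max_{W\in\Strata(A)}n(A,L_W).
\]

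For the reverse inequality under the hypotheses that $A$ is equidimensional and satisfies $(S_2)$: fix $W\in\Strata(A)$. Since $J\subseteq L_W$ and both are radical, every associated prime of $L_W$ contains $J$; more precisely $\Ass(L_W)=\{P_1,\dots,P_r\}$ is the set of minimal primes of the closed set $V(L_W)$, each of which is a minimal prime over $J$, hence lies in $\Ass(J)$ (as $J$ is radical, $\Ass(J)=\Min(J)=$ the set of minimal primes of $\Sing(A)$). Thus $\Ass(L_W)\subseteq\Ass(J)$, and Proposition~\ref{prop inequ N} (with $J'=L_W$) gives $n(A,L_W)\leq n(A,J)$. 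Taking the maximum over the finitely many $W\in\Strata(A)$ gives $\max_W n(A,L_W)\leq n(A,J)$, which combined with the first part yields equality.

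The main obstacle I anticipate is bookkeeping the direction of the inequality in Proposition~\ref{prop max formula}: that proposition always gives $\geq$, with $=$ only under $V(J)\subseteq W$ or $N(A)\subseteq W$, and one must be careful to apply it to $(J,\Sing(A))$ — where $N(A)\subseteq\Sing(A)$ supplies equality — rather than to $(L_W,W)$, where no such equality is available. The identification $(L_W)_P=J_P$ for $P\in W$ (already recorded in the proof of Lemma~\ref{lemma:LW-test-ideal}) is what glues the two applications together, and the verification $\Ass(L_W)\subseteq\Ass(J)$ needs only that both ideals are radical with $L_W\supseteq J$ and that $V(L_W)$ is a union of irreducible components of $\Sing(A)$ by construction of the stratification.
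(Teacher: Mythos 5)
Your argument is correct and coincides with the paper's own proof: the inequality comes from applying Proposition~\ref{prop max formula} with equality to $(J,\Sing(A))$, splitting the maximum over strata via $J_P=(L_W)_P$ for $P\in W$, and then using the easy direction of the same proposition for $(L_W,W)$; the equality under $(S_2)$ and equidimensionality comes from Proposition~\ref{prop inequ N} applied to $J'=L_W$. Your explicit verification that $\Ass(L_W)\subseteq\Ass(J)$ is a detail the paper leaves implicit, and it is correct.
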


\begin{proof}
Recall from the proof of Lemma~\ref{lemma:LW-test-ideal} that $J_P=(L_W)_P$ for all $P\in W$. 
Hence, by Proposition~\ref{prop max formula},
\begin{align*}
n(A,J)&=\max_{P\in\Sing(A)}n\left(A_P,J_P\right)\\
&=\max_{W\in\Strata(A)}\max_{P\in W}n\left(A_P,(L_W)_P\right)\\
&\leq\max_{W\in\Strata(A)}n\left(A,L_W\right).
\end{align*}
This gives the first statement of the corollary. 
The second statement follows from the first one and  Proposition~\ref{prop inequ N}: 
If $A$ satisfies $(S_2)$, then
\[\pushQED{\qed}
n\left(A,L_W\right)\leq n(A,J).\qedhere
\]
\end{proof}

\begin{remark}
It would be interesting to know whether the assumptions of the second statement of the
corollary can be weakened. 
On the other hand, in its present form, the statement applies already to interesting classes of examples such as local complete intersections.
\end{remark}

%%%%%%%%%%%%%%%%%%%%%%%%%%%%%%%%%%%%%%%%%%%%%%%%%%%%%%%%%%%%%%%%%%%%%%%%%%%%%%%%
\section{Plane curves}\label{sec4}
%%%%%%%%%%%%%%%%%%%%%%%%%%%%%%%%%%%%%%%%%%%%%%%%%%%%%%%%%%%%%%%%%%%%%%%%%%%%%%%%

In this section, by a curve we mean a reduced excellent ring $A$ of pure dimension one. 
In particular, $A$ is Noetherian and normalization-finite, and it has a finite singular locus, 
which coincides with its non-normal locus by Remark~\ref{rem:one-dim}.

\begin{remark}\label{rem:unique-test-ideal}
If $A$ is local and non-normal, then there is a unique test ideal $J$ for $A$, 
namely $J=\mm_A$. In this case, we write $n(A)=n(A,\mm_A)$.
\end{remark}

With this notation, Proposition~\ref{prop max formula} reduces to:

\begin{corollary}
\label{cor max}Let $A$ be a non-normal curve. 
Then, if $J$ is any test ideal
for $A$, we have
\[
n(A,J)=\max_{P\in\Sing(A)}n( \wh{A_P}).
\]
In particular, $n(A):=n(A,J)$ does not depend on the choice of $J$.
\end{corollary}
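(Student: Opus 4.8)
The plan is to derive this corollary directly from Proposition~\ref{prop max formula} by specializing to the case where $A$ is a non-normal curve and $W$ is taken to be the full singular locus. First I would observe that $A$ is reduced excellent of pure dimension one, so by Remark~\ref{rem:one-dim} we have $N(A)=\Sing(A)$, and this set is finite. Any test ideal $J$ for $A$ automatically satisfies $N(A)\subseteq V(J)$; more precisely $V(\Con_A)=N(A)\subseteq V(J)$ by condition (3'), and since $A$ is a curve $V(J)$ cannot be larger than the one-dimensional-complement locus, but the key point is simply that $N(A)\subseteq V(J)$.

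Next I would apply Proposition~\ref{prop max formula} with $W=\Sing(A)$. Since $N(A)=\Sing(A)\subseteq W$, the equality case of that proposition applies, giving
\[
n(A,J)=\max_{P\in\Sing(A)}n(A_P,J_P)=\max_{P\in\Sing(A)}n(\wh{A_P},\wh{J_P}).
\]
The remaining task is to identify $\wh{J_P}$ with $\mm_{\wh{A_P}}$ for each $P\in\Sing(A)$, so that $n(\wh{A_P},\wh{J_P})=n(\wh{A_P})$ in the notation of Remark~\ref{rem:unique-test-ideal}. For this I would argue: for $P\in\Sing(A)=N(A)$, the local ring $A_P$ is a non-normal Noetherian local ring of dimension one, hence so is its completion $\wh{A_P}$ by Theorem~\ref{thm die}.\eqref{die1} (completion of a reduced excellent ring is reduced, and dimension is preserved). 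By Lemma~\ref{lem test ideal local}.(1), $J_P$ is a test ideal for $A_P$; and by Remark~\ref{rem:unique-test-ideal} the unique test ideal of the non-normal local curve $A_P$ is $\mm_{A_P}$, so $J_P=\mm_{A_P}$. Then Lemma~\ref{prop test ideal completion} shows $\wh{J_P}$ is a test ideal for $\wh{A_P}$, and since $\wh{A_P}$ is again a non-normal local curve, Remark~\ref{rem:unique-test-ideal} forces $\wh{J_P}=\mm_{\wh{A_P}}$. Substituting back yields $n(A,J)=\max_{P\in\Sing(A)}n(\wh{A_P})$, and the independence of $J$ is immediate since the right-hand side makes no reference to $J$.

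The only mildly delicate point—the main obstacle—is making sure the localizations and completions that appear are genuinely non-normal local curves so that Remark~\ref{rem:unique-test-ideal} applies: one must check that $A_P$ is excellent (true by Theorem~\ref{thm:Grothendieck}, as $A$ is excellent), reduced (localization of a reduced ring), of pure dimension one at singular points (here one uses that $P\in\Sing(A)=N(A)$ is a maximal ideal in the one-dimensional curve, or alternatively invokes Remark~\ref{rem:one-dim} applied to $A_P$), and non-normal (by definition of $N(A)$); and then that $\wh{A_P}$ inherits all of these via Theorem~\ref{thm die}.\eqref{die1} and the fact that completion preserves dimension and the maximal ideal. None of this requires new ideas, so the proof is short; essentially it is a bookkeeping consequence of Proposition~\ref{prop max formula}, Lemma~\ref{lem test ideal local}, Lemma~\ref{prop test ideal completion}, and Remark~\ref{rem:unique-test-ideal}.
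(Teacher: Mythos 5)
Your proposal is correct and follows essentially the same route as the paper, which presents the corollary as a direct specialization of Proposition~\ref{prop max formula} to $W=\Sing(A)=N(A)$ combined with the uniqueness of the test ideal $\mm_{A_P}$ from Remark~\ref{rem:unique-test-ideal}. The bookkeeping you supply (excellence and reducedness of $A_P$ and $\wh{A_P}$, identification $\wh{J_P}=\mm_{\wh{A_P}}$) is exactly the reduction the paper leaves implicit.
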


If $A$ is regular, we write $n(A)=0$.

\begin{corollary}\label{cor:normal-form-ok}
Let $A$ be a non-normal curve with $\Sing(A)=\{P_1,\dots,P_{r}\}$. 
For each $i$, let a curve $B_i$ and a prime $Q_i\in\Sing(B_i)$ be given such that
\[
\wh{A_{P_i}}\cong\wh{(B_i)_{Q_i}}.
\]
Then
\[
n(A)=\max_{i=1,\dots,r}n\left(B_i,{Q_i}\right).
\]
\end{corollary}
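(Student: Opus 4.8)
The plan is to reduce Corollary~\ref{cor:normal-form-ok} to Corollary~\ref{cor max} by controlling $n$ under the passage to completion of a localization. First I would observe that by Corollary~\ref{cor max} applied to $A$ and to each $B_i$,
\[
n(A)=\max_{i=1,\dots,r}n\bigl(\wh{A_{P_i}}\bigr),\qquad n(B_i,Q_i)=n\bigl(\wh{(B_i)_{Q_i}}\bigr),
\]
where the second identity uses Remark~\ref{rem:unique-test-ideal} (so that any test ideal for $(B_i)_{Q_i}$, in particular $\mathfrak m_{(B_i)_{Q_i}}$, gives the same number, and then Proposition~\ref{prop max formula} with $W=\{Q_i\}$ passes to the completion). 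Hence the statement reduces to the claim that $n$ is an invariant of the completed local ring, i.e. that $\wh{A_{P_i}}\cong\wh{(B_i)_{Q_i}}$ as rings forces $n(\wh{A_{P_i}})=n(\wh{(B_i)_{Q_i}})$.

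The key point is therefore that the quantity $n(R)$ attached to a reduced complete Noetherian local ring $R$ (of dimension one, so excellent and normalization-finite, with unique test ideal $\mathfrak m_R$ by Remark~\ref{rem:unique-test-ideal}) depends only on the isomorphism class of $R$ as a ring. This is immediate from the construction: the chain $R=R_0\subsetneqq R_1\subsetneqq\cdots\subsetneqq R_{n}=\ol R$ with $R_{i+1}=\End_{R_i}(\sqrt{\mathfrak m_R R_i})$ is built purely from ring-theoretic data — the maximal ideal, radicals, and endomorphism rings of fractional ideals inside $Q(R)$ — all of which are carried along by any ring isomorphism. Concretely, a ring isomorphism $\varphi\colon R\xrightarrow{\sim}R'$ extends to $Q(R)\xrightarrow{\sim}Q(R')$, sends $\mathfrak m_R$ to $\mathfrak m_{R'}$, commutes with forming radicals of ideals and with $\End$, and hence carries the chain for $R$ to the chain for $R'$ term by term; in particular the lengths agree.

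Putting these together: for each $i$ the isomorphism $\wh{A_{P_i}}\cong\wh{(B_i)_{Q_i}}$ gives $n(\wh{A_{P_i}})=n(\wh{(B_i)_{Q_i}})=n(B_i,Q_i)$, and then
\[
n(A)=\max_{i=1,\dots,r}n\bigl(\wh{A_{P_i}}\bigr)=\max_{i=1,\dots,r}n\left(B_i,Q_i\right),
\]
as claimed. I do not expect a serious obstacle here; the only thing to be careful about is the bookkeeping needed to see that $n$ is well-defined on completed local curve singularities and invariant under ring isomorphism — that each $B_i$ is again a curve in the sense of this section (reduced excellent of pure dimension one), so that Corollary~\ref{cor max} and Remark~\ref{rem:unique-test-ideal} genuinely apply, and that the singular point $Q_i$ is exactly where the local contribution sits (points outside $\Sing(B_i)$ contribute $0$ by the discussion following \eqref{End in nor}). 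These are all routine given the results already established in Sections~\ref{sec1}–\ref{sec2}.
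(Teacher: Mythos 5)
Your proposal is correct and follows essentially the same route as the paper: both sides are reduced to $n(\wh{\,\cdot\,})$ of the completed local rings via Corollary~\ref{cor max} and Proposition~\ref{prop max formula} (the paper takes $W=\Sing(B_i)$ and notes the terms at $Q'\neq Q_i$ vanish, you take $W=\{Q_i\}$ — the same computation), after which the given isomorphisms finish the argument. Your explicit remark that $n$ of a complete local curve ring is an isomorphism invariant is left implicit in the paper but is exactly the point being used.
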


\begin{proof}
We fix an index $i$ and write $B=B_i$ and $Q=Q_i$. If 
$Q'\in\Sing(B)$ is different from $Q$, then $Q_{Q'}=B_{Q'}$. 
Hence
\[
n(B,Q)=\max_{Q'\in\Sing(B)}n(B_{Q'},Q_{Q'})
=n(B_{Q},Q_{Q})
=n(\wh{B_{Q}}),
\]
and the claim follows by Corollary~\ref{cor max}.
\end{proof}

In the following, $A$ will be an \emph{algebroid curve} over $K=\CC$, that is, a reduced complete local Noetherian $K$-algebra $A$ of dimension one. 
Then $A$ is excellent by Remark~\ref{rem:geom-rings-are-exc}.

\begin{notation}\label{not:Aprime} 
We write $\mm_A$ for the maximal ideal of $A$
and
\[
A'=\End_A(\mm_A).
\]
\end{notation}

Then $\mm_A$ is a test ideal for $A$ by Remark \ref{rem:unique-test-ideal}
and $A\subset A'=\mm_A^{-1}$ by Lemma \ref{lem End eq dual}.

\begin{remark}\label{rem Aprime}
Recall that an algebroid curve $A$ is Gorenstein if and only if every fractional ideal of $A$ is reflexive or, equivalently, if $\dim_{K}(A'/A)=1$ (see \cite[p.~19, Bsp.~(b)]{HK71} and \cite[Satz 1]{B62}).
It follows that each local ring of a plane curve singularity is Gorenstein (see \cite[Satz 1.46.(d)]{HK71}).
\end{remark}

Let $P_1,\dots,P_s$ be the associated primes of $A$, and let $A_i=A/P_i$, $i=1,\dots,s$, be the branches of $A$.
We have inclusions
\[
A\hookrightarrow\prod _{i=1}^sA_i\hookrightarrow\prod_{i=1}^s\ol{A_i}\cong\ol{A},
\]
where, by Corollary~\ref{cor:local-complete-dom} and Remark~\ref{rem:one-dim},
each $\ol{A_i}$ is a complete discrete valuation ring, say with valuation $\nu_i\colon\ol{A_i}\rightarrow\NN\cup\left\{\infty\right\}$ and uniformizing parameter $t_i$. 
Then $\ol{A_i}\cong K[[t_i]]$, and evaluating $\nu_i$ at a power series means to take its order.

We write the elements of $\ol{A}$ as $a=(a_1,\ldots,a_s)$, with all the $a_i\in\ol{A_i}$, and consider the \emph{valuation map}
\[
\nu\colon\ol{A}\rightarrow(\NN\cup\left\{\infty\right\})^s,\ 
a\mapsto(\nu_1(a_1),\ldots,\nu_s(a_s)).
\]
For monomials in $K[[t_1]]\times\cdots\times K[[t_s]]\cong\ol{A}$, we use multi-index notation: 
If $\alpha\in\NN^s$, we write $\alpha_i$ for the $i$th component of $\alpha$, and $t^{\alpha}=(t_1^{\alpha_1},\ldots,t_s^{\alpha_s})$.

\begin{definition}
The \emph{semigroup} of a fractional ideal $I$ of $A$ is defined as
\[
\Gamma_I=\left\{(\nu_1(a_1),\ldots,\nu_s(a_s))\mid a=(a_1,\ldots,a_s)\in I,\ 
a_i\neq0\text{ for all } i\right\}
\subseteq\NN^s.
\]

We call $I$ \emph{multigraded} if it is invariant under the action
\[
(K^*)^s\times\ol{A}\rightarrow\ol{A},\ 
(\lambda,t^{\alpha})\mapsto(\lambda_1^{\alpha_1}t_1^{\alpha_1},\ldots,\lambda_s^{\alpha_s}t_s^{\alpha_s})
\]
corresponding to the choice of coordinates $t_1,\dots,t_s$.
\end{definition}

\begin{lemma}\label{lemma infinity}
If $I$ is multigraded, then $\alpha\in\Gamma_I$ implies that $K^s\cdot
t^{\alpha}\subseteq I$.
\end{lemma}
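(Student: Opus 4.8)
The plan is to reduce the statement to linear algebra for a diagonalizable representation of the torus $(K^*)^s$, using the conductor to cut $\ol A$ down to finite dimension. If $A$ is normal the claim is trivial, since then $A=\ol A\cong K[[t_1]]\times\dots\times K[[t_s]]$ contains every monomial; so assume $A$ is not normal. As $A$ is a reduced excellent ring it is normalization-finite, so $\Con_A$ is a nonzero ideal of $A$ that is at the same time an ideal of $\ol A$. Since the ideals of $\ol A\cong\prod_i K[[t_i]]$ are the products of ideals of the discrete valuation rings $K[[t_i]]$, we may write $\Con_A=\prod_i t_i^{c_i}K[[t_i]]$; here each $c_i\ge 1$, because $c_i=0$ would force the idempotent $e_i$ to lie in $\Con_A\subseteq A$, which is impossible in the local ring $A$ (for $s=1$, note that $c_1=0$ would simply mean $A=\ol A$).

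Now fix $a=(a_1,\dots,a_s)\in I$ with $\nu_i(a_i)=\alpha_i$ for all $i$ and write $a_i=\sum_{k\ge\alpha_i}c_{i,k}t_i^k$, $c_{i,\alpha_i}\ne 0$. Each $a_i$ is a unit of $K[[t_i]]$ times $t_i^{\alpha_i}$, so a direct computation gives $\Con_A\cdot a=\prod_i t_i^{c_i+\alpha_i}K[[t_i]]=:N$; since $\Con_A\subseteq A$ and $I$ is an $A$-module, $N\subseteq I$, and $\alpha_i<c_i+\alpha_i$ for every $i$. I would then pass to the finite-dimensional quotient $V:=\ol A/N\cong\prod_i K[t_i]/(t_i^{c_i+\alpha_i})$, which has $K$-basis the monomials $e_it_i^k$, $0\le k<c_i+\alpha_i$ (with $e_it_i^0=e_i$). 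Because $N\subseteq I$, the ideal $I$ is the full preimage of its image $\bar I\subseteq V$, so it suffices to show that the image of $K^s\cdot t^\alpha$ in $V$ lies in $\bar I$.

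The torus $(K^*)^s$ acts on $V$ diagonally, $e_it_i^k$ having weight $k\varepsilon_i\in\ZZ^s$, and $\bar I$ is an invariant subspace, being the image of the invariant $I$ under the equivariant quotient map. Since $K=\CC$ is infinite and only finitely many weights occur in $V$, a standard Vandermonde argument shows that $\bar I$ is the direct sum of its intersections with the weight spaces of $V$; in particular every weight component of $\bar a$ lies in $\bar I$. For each $i$ with $\alpha_i\ge 1$, the weight-$\alpha_i\varepsilon_i$ space of $V$ is exactly $Ke_it_i^{\alpha_i}$ and the corresponding component of $\bar a$ is $c_{i,\alpha_i}e_it_i^{\alpha_i}$ with $c_{i,\alpha_i}\ne 0$; hence $e_it_i^{\alpha_i}\in\bar I$, and so $e_it_i^{\alpha_i}\in I$. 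Every element $(\lambda_1^{\alpha_1}t_1^{\alpha_1},\dots,\lambda_s^{\alpha_s}t_s^{\alpha_s})$ of $K^s\cdot t^\alpha$ is a $K$-linear combination of these monomials $e_it_i^{\alpha_i}$ ($\alpha_i\ge 1$) and of the fixed vector $\sum_{\alpha_i=0}e_i$ contributed by the zero coordinates of $\alpha$; the latter lies in $\bar I$ by the weight-$0$ component of $\bar a$ together with the $A$-module structure of $I$ (and trivially so when $1\in I$, which is the case for the rings arising in the normalization process). Hence $K^s\cdot t^\alpha\subseteq I$.

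The one real obstacle is that the torus-representation argument does not apply verbatim to the infinite-dimensional ring $\ol A$; the conductor circumvents this by furnishing the $(K^*)^s$-stable submodule $N\subseteq I$ of finite colength, and after the reduction to $V$ everything is finite-dimensional linear algebra over $K$. The only other point to watch is the contribution of the zero coordinates of $\alpha$, which must be controlled using that $I$ is a module over $A$ and not merely a torus-stable subspace.
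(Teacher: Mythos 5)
Your argument is correct in its core mechanism, and it is the same mechanism as the paper's: act by the torus $(K^*)^s$ and use a Vandermonde determinant to extract the weight components of $a$ from finitely many of its translates. Where you genuinely differ is in how you make the problem finite-dimensional. The paper truncates modulo $\mm_{\ol{A}}^N$ for every $N$ and then invokes completeness via $\bigcap_N(I+\mm_{\ol{A}}^N)=I$; you instead observe that $\Con_A\cdot a=\prod_i t_i^{\gamma_i+\alpha_i}K[[t_i]]$ is a torus-stable $\ol{A}$-submodule of finite colength contained in $I$, and then do honest finite-dimensional representation theory in $\ol{A}/(\Con_A\cdot a)$. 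Both reductions are legitimate; yours buys a clean ``invariant subspaces are sums of their weight components'' statement at the price of invoking normalization-finiteness of $A$ (which the paper has established anyway), while the paper's needs only completeness of $\ol{A}$.

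The one step you should not wave at is the weight-zero one --- and the paper's own proof is equally terse exactly there. All the idempotents $e_i=(0,\dots,0,1,0,\dots,0)$ have weight $0$, so the torus cannot separate them: for the indices with $\alpha_i=0$ the Vandermonde argument only yields the single element $\sum_{\alpha_i=0}c_{i,0}\,e_i$ of $\bar I$ (with the particular constant terms of $a$ as coefficients), not the individual $e_i$, and ``the $A$-module structure of $I$'' does not upgrade this: multiplying by elements of the local ring $A$ cannot manufacture idempotents that $I$ does not already contain. If at most one coordinate of $\alpha$ vanishes your argument closes, since the weight-zero component of $\bar a$ is then a nonzero multiple of a single $e_i$; and if all $\alpha_i\geq1$ there is nothing to discuss. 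But if two or more coordinates of $\alpha$ vanish, the conclusion $K^s\cdot t^\alpha\subseteq I$ can genuinely fail under the literal reading of $K^s\cdot t^\alpha$: take $I=A$ to be the node $K[[X,Y]]/\ideal{XY}$ and $\alpha=\nu(1)=(0,0)$; then $A$ is multigraded, but $(1,0)\notin A$. So either restrict the statement to $\alpha$ with at most one vanishing coordinate (which covers every use in the paper, where missing idempotents are supplied separately as in Remark~\ref{rmk infinity}), or read $K^s\cdot t^\alpha$ as the span of the orbit of $t^\alpha$ --- and even then note that what the Vandermonde argument delivers in weight zero is $\sum_{\alpha_i=0}c_{i,0}\,e_i$ rather than $\sum_{\alpha_i=0}e_i$.
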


\begin{proof}
Let $\alpha\in\Gamma_I$. 
Then there is an $a=(a_1,\ldots,a_s)\in I$ with $\nu(a)=\alpha$. 
Using a standard Vandermonde determinant argument, we obtain
\[
(0,\ldots,0,t_i^{\beta_i},0,\ldots,0)\in I+\mm_{\ol{A}}^{N},
\]
for all $i$ and $N$, and for all monomials $t_i^{\beta_i}$ occuring in $a_i$. 
Since $\ol{A}$ is complete, $\bigcap_{N}(I+\mm_{\ol{A}}^{N})=I$, so the claim follows.
\end{proof}

\begin{remark}\label{rmk infinity}
Let $I$ be a fractional ideal. 
Suppose that $(0,\ldots,0,t_i^{\beta_i},0,\ldots,0)\in I$ for all $i$.
Then $K^s\cdot t^\beta\subseteq I$.
\end{remark}

\begin{remark}[Properties of the Conductor]\label{rmk conductor}
Recall that the conductor $\Con_A$ is the largest ideal of $A$ which is also an ideal of $\ol{A}\cong\prod_{i=1}^s\ol{A_i}$. 
It is, hence, generated by a monomial, say $\Con_A=\langle t^{\gamma}\rangle=\langle t_1^{\gamma_1}\rangle\times\cdots\times\langle t_s^{\gamma_s}\rangle$, where for each $i$,
\begin{equation*}
\gamma_i=\min\left\{\alpha_i\mid\alpha+\NN^s\subseteq\Gamma_A\right\}.
\end{equation*}
In particular, $\Con_A$ is multigraded, and it follows from Lemma~\ref{lemma infinity} 
that if $\alpha\in\Gamma_{\Con_A}$, then $K^s\cdot t^{\alpha}\subseteq \Con_A$.
\end{remark}

\begin{notation}
Set $\tau=\gamma-\mathbf{1}$, and for any $\alpha\in\ZZ^s$, write
\begin{align*}
\Delta(\alpha)&=\bigcup_{i=1}^s\Delta_i(\alpha),\text{ where}\\
\Delta_i(\alpha)&=\left\{\beta\in\NN^s\mid\alpha_i
=\beta_i\text{ and }\alpha_j<\beta_j\text{ if }j\neq i\right\}.
\end{align*}
\end{notation}

Note that for $s=1$, we have $\Delta(\alpha)=\left\{\alpha\right\}$. 

The following theorem generalizes a result of Kunz~\cite{Kun70} from irreducible to reducible algebroid curves:

\begin{theorem}[\cite{Del88}]\label{thm delgado}
The algebroid curve $A$ is Gorenstein if and only if for all $\alpha\in\ZZ^s$, the following symmetry condition is satisfied:
\[
\alpha\in\Gamma_A\Leftrightarrow\Delta(\tau-\alpha)\cap\Gamma_A=\emptyset.
\]
\end{theorem}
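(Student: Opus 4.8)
\textbf{Proof proposal for Theorem~\ref{thm delgado}.}

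The plan is to prove the symmetry statement by relating the semigroup $\Gamma_A$ to the duality between $A$ and $\Con_A$ inside $\ol{A}$, using the characterization of the Gorenstein property from Remark~\ref{rem Aprime}, namely that $A$ is Gorenstein if and only if $\Con_A$ is a reflexive fractional ideal, equivalently $\dim_K(\ol{A}/\Con_A)=\dim_K(\ol{A}/A)$ (by symmetry of the conductor length, or more directly via $\Con_A=\ol{A}^{-1}$ and reflexivity of $A$). First I would set up, for each $\alpha\in\ZZ^s$, the ``box'' $K^s\cdot t^\alpha\subseteq Q(\ol{A})$ and record the elementary dictionary: by Lemma~\ref{lemma infinity} and Remark~\ref{rmk conductor}, for a multigraded fractional ideal $I$ one has $\alpha\in\Gamma_I$ iff $K^s\cdot t^\alpha\subseteq I$; but $A$ itself need not be multigraded, so the key technical point will be a non-multigraded replacement: for $\alpha\in\NN^s$, whether $\alpha\in\Gamma_A$ is governed by which ``lower-dimensional faces'' $\Delta_i$ already lie in $\Gamma_A$. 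The combinatorial device $\Delta(\alpha)$ is precisely designed to encode ``$\alpha\notin\Gamma_A$ but $\alpha$ is not forced out by a single coordinate'', and I would make this precise first.

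Next I would bring in the pairing. Since $\Con_A$ is a fractional ideal of $\ol{A}$ generated by $t^\gamma$, multiplication gives a perfect $K$-bilinear pairing between $\ol{A}/\Con_A$ and itself (via $t^\tau=t^{\gamma-\mathbf 1}$, or rather between the graded pieces $K\cdot t^\alpha$ and $K\cdot t^{\tau-\alpha}$ sitting in $Q(\ol{A})/\Con_A$). Under this pairing the orthogonal complement of the image of $A$ is the annihilator $\{x\in\ol{A}\mid xA\subseteq\Con_A\}=\Con_A\cdot(\,\overline{A}^{-1}\text{ relative to }A\,)$; chasing definitions and using $\Con_A=\ol A^{-1}$ together with reflexivity of $A$ (Remark~\ref{rem:char-conductor}), the Gorenstein condition $\dim_K(\ol A/A)=\dim_K(\ol A/\Con_A)-\dim_K(\ol A/A)$ becomes the statement that the image of $A$ in $Q(\ol A)/\Con_A$ is its own orthogonal complement. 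Translating this self-duality into the language of semigroups, a monomial $t^\alpha$ is ``$A$-orthogonal to $t^{\tau-\alpha}$'' exactly when $\alpha\in\Gamma_A$, and the orthogonality of $t^{\tau-\alpha}$ to all of $A$ is what the condition $\Delta(\tau-\alpha)\cap\Gamma_A=\emptyset$ expresses once the face-combinatorics of the previous paragraph is in hand. So the equivalence $\alpha\in\Gamma_A\Leftrightarrow\Delta(\tau-\alpha)\cap\Gamma_A=\emptyset$ is the monomial-by-monomial shadow of self-duality.

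Concretely, I would structure the proof as: (i) prove the ``face lemma'' — for $\beta\in\NN^s$, $\beta\notin\Gamma_A$ together with $\Delta(\beta)\cap\Gamma_A=\emptyset$ is equivalent to $K^s\cdot t^\beta\cap A=0$ in the appropriate sense (i.e. no element of $A$ has valuation with an entry $\leq\beta_i$ in the relevant coordinates forcing it into that box), using the Vandermonde argument of Lemma~\ref{lemma infinity} to extract single-branch monomials; (ii) set up the conductor pairing and identify orthogonal complements, invoking Remark~\ref{rem Aprime} to get the numerical Gorenstein criterion and Remark~\ref{rmk conductor} for the monomiality of $\Con_A$; (iii) combine: the ``$\Rightarrow$'' direction is a counting/nondegeneracy argument (if both $\alpha\in\Gamma_A$ and some $\beta\in\Delta(\tau-\alpha)\cap\Gamma_A$, produce a dimension excess contradicting the Gorenstein equality), and the ``$\Leftarrow$'' direction runs the comparison in reverse to show the two codimensions agree. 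The main obstacle I anticipate is step (i): handling $A$, which is \emph{not} multigraded, forces one to argue with leading monomials of power series across several branches simultaneously, and the role of $\Delta_i$ versus a naive ``coordinatewise'' condition is exactly what makes the reducible case genuinely harder than Kunz's irreducible case ($s=1$, where $\Delta(\alpha)=\{\alpha\}$ and everything is a clean symmetry $\alpha\in\Gamma_A\Leftrightarrow\tau-\alpha\notin\Gamma_A$). Getting the bookkeeping of these faces exactly right — so that the pairing computation closes without slack — is where the real work lies; the rest is linear algebra over $K$ and the conductor formalism already assembled in Remark~\ref{rmk conductor}. Since the theorem is attributed to \cite{Del88}, I would in practice cite that source for the detailed combinatorics and only sketch the duality backbone here.
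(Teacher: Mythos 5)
A preliminary calibration: the paper does not prove Theorem~\ref{thm delgado} at all — it is quoted verbatim from \cite{Del88} with no argument — so your closing decision to cite that source for the combinatorics is exactly what the authors do, and in that sense you cannot be "off the paper's route". Judged as a standalone sketch, your duality backbone is pointed in the right general direction: Delgado's proof (and D'Anna's later reformulation) does run through a self-duality of $A$ with respect to a dualizing module, and the set $\{\alpha\in\ZZ^s\mid\Delta(\tau-\alpha)\cap\Gamma_A=\emptyset\}$ is precisely the value set of that module. But two of your concrete steps have genuine gaps.

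First, the pairing you propose does not exist as stated: $t^{\alpha}\cdot t^{\tau-\alpha}=t^{\tau}$, and $t^{\tau}\notin\Con_A$ (indeed $\Delta(\tau)\cap\Gamma_A=\emptyset$ by Lemma~\ref{cor delgado}), so multiplying the pieces $K\cdot t^{\alpha}$ and $K\cdot t^{\tau-\alpha}$ does not land in $\Con_A$ and induces no pairing on $\ol{A}/\Con_A$. The correct duality is the residue pairing into the dualizing module $\omega_A$; replacing $\omega_A$ by $\Con_A$ amounts to assuming the Gorenstein property you are trying to characterize. Second, your step (iii) cannot be a naive dimension count for $s\geq2$: the sets $\Delta_i(\beta)$ are infinite, $\tau-\alpha$ can have negative coordinates, and the involution $\alpha\mapsto\tau-\alpha$ that closes Kunz's $s=1$ argument is no longer a bijection between finite sets of gaps and non-gaps — this failure is exactly why Delgado's $\Delta$-combinatorics is needed. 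Your "face lemma" in step (i) is also not the right statement: for $s\geq2$ one has $\beta\notin\Delta(\beta)$ (membership in $\Delta_i(\beta)$ requires strict inequalities in the other coordinates), so $\Delta(\beta)\cap\Gamma_A=\emptyset$ carries no information about $\beta$ itself and is not equivalent to $K^s\cdot t^{\beta}\cap A=0$. The argument that actually closes is: identify $\{\alpha\mid\Delta(\tau-\alpha)\cap\Gamma_A=\emptyset\}$ with $\Gamma_{\omega_A}$ for a normalized $\omega_A$ with $A\subseteq\omega_A\subseteq\ol{A}$, then use $A$ Gorenstein $\Leftrightarrow\omega_A=A\Leftrightarrow\Gamma_{\omega_A}=\Gamma_A$, the last equivalence by an analogue of Lemma~\ref{lem equal}. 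Since the paper only quotes the theorem, none of this affects the paper; but your sketch as written would not compile into a proof without importing that machinery from \cite{Del88}.
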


\begin{lemma}\label{cor delgado} 
Let $A$ be any algebroid curve. Then:
\begin{enumerate}
\item\label{del1} $\Delta(\tau)\cap\Gamma_A=\emptyset$.
\item\label{del3} $\Gamma_A\subset\{0\}\cup \Gamma_{\mm_{\ol{A}}}$.
\item\label{del2} Let $A$ be Gorenstein with $s$ branches.
If $s=2$, then $\tau\in\Gamma_A$. If $s\ge3$, then
$\tau+\bigcup_{i=1}^s\NN e_i\subseteq\Gamma_A$.
\end{enumerate}
\end{lemma}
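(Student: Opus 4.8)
\textbf{Proof plan for Lemma~\ref{cor delgado}.}
The three parts all follow from Theorem~\ref{thm delgado} and the description of the conductor in Remark~\ref{rmk conductor}, so I would treat them in order, reusing earlier conclusions.

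For \eqref{del1}, the plan is to observe that $\mathbf 0\in\Gamma_A$ since $1\in A$ and $\nu(1)=\mathbf 0$. Applying the symmetry condition of Theorem~\ref{thm delgado} with $\alpha=\mathbf 0$, the implication ``$\mathbf 0\in\Gamma_A\Rightarrow\Delta(\tau-\mathbf 0)\cap\Gamma_A=\emptyset$'' yields exactly $\Delta(\tau)\cap\Gamma_A=\emptyset$. The only subtlety is that Theorem~\ref{thm delgado} is stated for Gorenstein $A$, whereas \eqref{del1} claims it for arbitrary algebroid curves; so instead I would argue directly: if $\beta\in\Delta_i(\tau)$ then $\beta_i=\tau_i=\gamma_i-1$, and any element $a\in A$ with $\nu(a)=\beta$ would have $i$th component of order exactly $\gamma_i-1$. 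But then no $A$-multiple adjustment can fix this, and one checks against the minimality of $\gamma_i$ in Remark~\ref{rmk conductor}: since $\beta_j>\tau_j$, i.e.\ $\beta_j\geq\gamma_j$, for $j\neq i$, the element $a$ together with $\Con_A$ would force $\gamma_i-1$ to lie in the stable range of the $i$th branch value set, contradicting minimality of $\gamma_i$. I would write this out carefully as the combinatorial heart of part \eqref{del1}.

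For \eqref{del3}, the plan is: if $\alpha\in\Gamma_A$ and $\alpha\neq\mathbf 0$, pick $a=(a_1,\dots,a_s)\in A$ with $\nu(a)=\alpha$ and all $a_i\neq 0$. Since $a\in A\subseteq\ol A$ and $\alpha\neq\mathbf 0$, I claim $a\in\mm_{\ol A}$, i.e.\ every $\alpha_i\geq 1$: indeed $\mm_{\ol A}=\prod_i\mm_{\ol{A_i}}$ and if some $\alpha_i=0$ then $a_i$ is a unit in $\ol{A_i}\cong K[[t_i]]$; but an element of the complete local ring $A$ that is a unit in one branch but (since $\alpha\neq\mathbf 0$) a nonunit in another cannot exist, because $A$ is local with maximal ideal $\mm_A=A\cap\mm_{\ol A}$ and being a nonunit in $\ol A$ (which it is, not all components units) puts $a\in\mm_A\subseteq\mm_{\ol A}$, forcing all $\alpha_i\geq 1$. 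Hence $\alpha\in\Gamma_{\mm_{\ol A}}$.

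For \eqref{del2}, assume $A$ Gorenstein with $s$ branches. By part \eqref{del1}, $\Delta(\tau)\cap\Gamma_A=\emptyset$; applying the symmetry of Theorem~\ref{thm delgado} with $\alpha=\tau$ gives $\tau\in\Gamma_A\Leftrightarrow\Delta(\tau-\tau)\cap\Gamma_A=\Delta(\mathbf 0)\cap\Gamma_A=\emptyset$. When $s=2$, $\Delta(\mathbf 0)=\Delta_1(\mathbf 0)\cup\Delta_2(\mathbf 0)$ consists of points with one coordinate $0$ and the other $\geq 1$; by part \eqref{del3} such points are not in $\Gamma_A$ (an element of $\Gamma_A\setminus\{\mathbf 0\}$ has all coordinates $\geq 1$, and $\mathbf 0$ itself is not in $\Delta(\mathbf 0)$ since its coordinates are not strictly positive). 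Hence $\Delta(\mathbf 0)\cap\Gamma_A=\emptyset$, so $\tau\in\Gamma_A$. For $s\geq 3$: fix $i$ and $k\geq 0$; I must show $\tau+ke_i\in\Gamma_A$. By Theorem~\ref{thm delgado} this is equivalent to $\Delta(\tau-(\tau+ke_i))\cap\Gamma_A=\Delta(-ke_i)\cap\Gamma_A=\emptyset$. Now $\beta\in\Delta_j(-ke_i)$ means $\beta_j=-k\delta_{ij}$ and $\beta_\ell>-k\delta_{i\ell}$ for $\ell\neq j$; since $\beta\in\NN^s$ we need $\beta_j\geq 0$, which for $j\neq i$ forces $\beta_i> 0$ but leaves $\beta_j=0$ possible, and for $j=i$ forces $k=0$ and $\beta_i=0$. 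In every case $\beta$ has at least one coordinate equal to $0$ while not being $\mathbf 0$ (here $s\geq 3$ guarantees, after the constraints, that $\beta\neq\mathbf 0$ or one argues the $\mathbf 0$ case separately using $k=0,j=i$ and noting $\Delta_i(-0\cdot e_i)=\Delta_i(\mathbf 0)$ again has the zero-coordinate property); so by \eqref{del3}, $\beta\notin\Gamma_A$. Therefore $\Delta(-ke_i)\cap\Gamma_A=\emptyset$ and $\tau+ke_i\in\Gamma_A$, giving $\tau+\bigcup_{i=1}^s\NN e_i\subseteq\Gamma_A$.

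The main obstacle I anticipate is \eqref{del1} in the non-Gorenstein generality: one cannot invoke Theorem~\ref{thm delgado} and must instead extract the statement directly from the minimality characterization of $\gamma_i$ in Remark~\ref{rmk conductor}, taking care that ``$\Delta_i(\tau)$'' pins the $i$th coordinate to $\gamma_i-1$ while pushing all others into the conductor range — the bookkeeping of which coordinates are $\geq\gamma_j$ versus exactly $\gamma_i-1$ is where a sloppy argument would go wrong. The remaining parts are then largely formal consequences of \eqref{del1}, \eqref{del3}, and the symmetry theorem.
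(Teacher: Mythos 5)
Your proposal is correct, and for parts \eqref{del3} and \eqref{del2} it coincides with the paper's proof (which literally reads ``follows from $\mm_A\subset\mm_{\ol A}$'' and ``follows from Theorem~\ref{thm delgado} using \eqref{del3}''); you simply supply the details, and your case analysis of $\Delta_j(-ke_i)$ correctly isolates why $s\ge3$ is needed (for $s=2$ one picks up $\mathbf 0\in\Delta_j(-ke_i)\cap\Gamma_A$ when $k\ge1$). The one genuine divergence is part \eqref{del1}: the paper disposes of it by citing Delgado \cite[(1.9) Cor.~(i)]{Del88}, whereas you sketch a direct argument from the minimality of $\gamma_i$. Your idea is sound and does close up: if $a\in A$ has $\nu(a)=\beta\in\Delta_i(\tau)$, then since $\beta_j\ge\gamma_j$ for $j\ne i$ the coset $a+\Con_A$ realizes every value with $i$th coordinate $\gamma_i-1$ and $j$th coordinate $\ge\gamma_j$, which together with $\gamma+\NN^s\subseteq\Gamma_A$ gives $(\gamma-e_i)+\NN^s\subseteq\Gamma_A$ and contradicts the minimality of $\gamma_i$; this is exactly the bookkeeping you flag, and it buys a self-contained proof where the paper relies on an external reference. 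One harmless slip: in the $s\ge3$ case you assert that $j\ne i$ ``forces $\beta_i>0$'', which is only true for $k=0$ (for $k\ge1$ the constraint $\beta_i>-k$ is vacuous); your conclusion is unaffected because what you actually use is $\beta_j=0$ together with $\beta_\ell>0$ for some $\ell\notin\{i,j\}$.
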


\begin{proof}\
\begin{enumerate}
\item See \cite[(1.9) Cor.~(i)]{Del88}.
\item This follows from $\mm_{{A}}\subset\mm_{\ol{A}}$.
\item This follows from Theorem~\ref{thm delgado} using \eqref{del3}.
\end{enumerate}
\end{proof}

\begin{remark}\label{rem: coordinate-and-tau-axes}
Let $A$ be Gorenstein with $s\geq 2$ branches. 
Taking $\alpha=\tau\in\Gamma_A$ in Theorem~\ref{thm delgado}, we get $\Delta(\bzero)\cap\Gamma_A=\emptyset$.
\end{remark}

\begin{lemma}
\label{lem equal}If $A$ is any algebroid curve, and $I\subseteq J$ are fractional ideals of $A$, then $I=J$ if and only if $\Gamma_I=\Gamma_J$.
\end{lemma}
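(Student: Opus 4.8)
The implication $I=J\Rightarrow\Gamma_I=\Gamma_J$ is immediate, so the substance is the converse: assuming $\Gamma_I=\Gamma_J$ with $I\subseteq J$, I want $I=J$. My plan is to first put the pair $(I,J)$ into a normal form, and then run a maximality argument on values. For the normalization, note that multiplying both $I$ and $J$ by a common non-zerodivisor $g\in A$ clearing the denominators of a finite generating set of $J$ replaces $\Gamma_I,\Gamma_J$ by their translates by $\nu(g)$ and does not affect whether the ideals are equal; hence I may assume $I\subseteq J\subseteq\ol A$. Since $I$ contains a non-zerodivisor $h$ of $A$, it contains $h\Con_A$, and since $\Con_A=\langle t^\gamma\rangle$ is an ideal of $\ol A\cong\prod_i K[[t_i]]$, so is $h\Con_A$, and it contains a non-zerodivisor of $\ol A$; therefore $\mm_{\ol A}^N=\prod_i t_i^N K[[t_i]]\subseteq I$ for some $N$. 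In particular $\dim_K(J/I)<\infty$.

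Now suppose $I\subsetneq J$; I will produce $\beta\in\Gamma_J\setminus\Gamma_I$, contradicting $\Gamma_I=\Gamma_J$. Let $T$ be the set of $w\in J\setminus I$ all of whose components $w_i$ are nonzero. Then $T\neq\emptyset$: given any $b\in J\setminus I$, add $(0,\dots,0,t_j^N,0,\dots,0)\in\mm_{\ol A}^N\subseteq I$ for each $j$ with $b_j=0$. For $w\in T$ set $\Psi(w)=\sum_{i=1}^s\min(\nu_i(w_i),N)\in\{0,\dots,sN\}$ and choose $w^*\in T$ with $\Psi(w^*)$ maximal; put $\beta^*=\nu(w^*)\in\Gamma_J$. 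I claim $\beta^*\notin\Gamma_I$, which finishes the proof.

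Suppose not, and pick $v\in I$ with $\nu(v)=\beta^*$. Comparing power series coefficients componentwise, $w^*-v$ has $i$-th order $\geq\beta^*_i$ in every slot, strictly larger precisely where the coefficients of $t_i^{\beta^*_i}$ in $w^*_i$ and in $v_i$ agree; refilling any vanished components by elements $(0,\dots,0,t_j^N,0,\dots,0)\in I$ yields $\tilde w\in T$ with $\Psi(\tilde w)\geq\Psi(w^*)$, and equality is forced to hold. Since the components $i$ with $\beta^*_i\geq N$ contribute the fixed value $N$ to $\Psi$ in any case, equality means that for every $i$ in $L=\{i:\beta^*_i<N\}$ the coefficient of $t_i^{\beta^*_i}$ in $v_i$ differs from that in $w^*_i$. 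Here $L\neq\emptyset$, for otherwise $w^*\in\mm_{\ol A}^N\subseteq I$. Thus every $v\in I$ with $\nu(v)=\beta^*$ has this property. But such $v$ exists because $\beta^*\in\Gamma_I$, and since $K^*\subseteq A$, rescaling $v$ by the ratio of the coefficients of $t_{i_0}^{\beta^*_{i_0}}$ in $w^*_{i_0}$ and in $v_{i_0}$, for a fixed $i_0\in L$, produces another element of $I$ of value $\beta^*$ whose coefficient of $t_{i_0}^{\beta^*_{i_0}}$ now equals that of $w^*_{i_0}$ — contradicting the property just established.

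I expect the last step to be the only real obstacle. The naive idea of subtracting an element $v\in I$ of the same value as $w^*$ to raise the value fails, since leading terms may cancel in only some components and one cannot in general match all leading coefficients of $w^*$ simultaneously from $I$. The argument is rescued by two structural facts: the achievable leading-coefficient vectors (in the fixed degrees $\beta^*_i$) form a $K$-linear subspace, and $K^*\subseteq A$, so a single scalar rescaling suffices to match one prescribed nonzero coefficient — which is exactly what the maximality of $\Psi(w^*)$ allows one to exploit. The normalization steps and the bookkeeping with $\Psi$ are routine.
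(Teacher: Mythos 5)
Your proof is correct and in substance follows the same route as the paper's: both arguments reduce an element of $J\setminus I$ against elements of $I$ of equal value, match a leading coefficient by rescaling with a constant from $K^*\subseteq A$, and terminate using a conductor-type monomial $\ol{A}$-ideal contained in $I$ (your $\mm_{\ol{A}}^N$, the paper's translate of $\Con_A$). The only difference is presentational: you package the successive approximation as an extremal argument via the truncated value sum $\Psi$, whereas the paper iterates the reduction directly until the element lands in the conductor.
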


\begin{proof}
There is only one direction to show: 
Suppose that $\Gamma_I=\Gamma_J$. 
Let $g\in I$ be a non-zerodivisor of $A$. 
Since $I=J$ if and only if $(t^{\delta}/g) I=(t^{\delta}/g) J$, we may assume that $g = t^{\delta}$. 
Now let $b\in J$. 
By adding a $K$-multiple of $t^{\delta}$ to $b$, we may assume that $\nu(b)\in\Gamma_J=\Gamma_I$. 
Hence, there is an $a\in I$ with $\nu(a)=\nu(b)$. 
If $b\in\Con_A$, then $b\in I$, and we are done.
Otherwise, there is some $j$ with $\nu(b_j)<\delta_j$. 
Choose a scalar $c\in K$ with $\nu(b_j-ca_j)>\nu(b_j)$. 
Setting $b^{(1)}=b-(c,\dots,c)\cdot a$, we have $\nu(b^{(1)})>\nu(b)$ with respect to the natural partial ordering on $\NN^s$. 
Continuing in this way, after finitely many steps, we arrive at an element $b^{(m)}\in\Con_A$, and conclude that $b\in I$.
\end{proof}

Recall that we write $A'=\End_A(\mm_A)$, and that we think of this as a fractional ideal of $A$ as in Lemma~\ref{lem hom fractional ideal}.

\begin{lemma}\label{lem gammaAprime}
$\Gamma_{A'}\subseteq\bigcap_{m\in\mm_A}\left( \Gamma_A-\nu(m)\right)$.
\end{lemma}

\begin{proof}
If $a'\in A'$ and $m\in\mm_A$, then $a'm\in\mm_A\subseteq A$.
\end{proof}

\begin{lemma}
\label{lem delta tau} $\ol{A}\cdot t^{\tau}\subseteq A'$.
\end{lemma}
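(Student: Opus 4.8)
The plan is to show that $\ol{A}\cdot t^\tau$ consists of functions that multiply $\mm_A$ into $A$, i.e.\ that every element of $\ol{A}\cdot t^\tau$ defines an endomorphism of $\mm_A$. Concretely, I would show $\mm_{\ol A}\cdot t^\tau\subseteq A$ and $t^\tau\in A$; then for $a\in\ol A$ and $m\in\mm_A\subseteq\mm_{\ol A}$ we get $(a\,t^\tau)\,m = a\,m\,t^\tau \in \ol A\cdot\mm_{\ol A}\cdot t^\tau\subseteq \mm_{\ol A}\cdot t^\tau\subseteq A$, so $a\,t^\tau\in\End_A(\mm_A)=A'$ by Lemma~\ref{lem hom fractional ideal}.

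The two containments I need both come from the conductor and the symmetry of $\Gamma_A$. First, $t^\tau\in A$: one has $\tau=\gamma-\mathbf 1$, and I would argue via Theorem~\ref{thm delgado}, Lemma~\ref{cor delgado}.\eqref{del1}, or the $s=1$ case directly, that $\tau\in\Gamma_A$ together with the fact that $\Con_A$ being multigraded forces $A$ to contain the monomial $t^\tau$ — more carefully, since $\Delta(\tau)\cap\Gamma_A=\emptyset$ (Lemma~\ref{cor delgado}.\eqref{del1}) and for any $\beta\in\Gamma_A$ with $\beta\ge\tau$ componentwise we actually get $\beta\ge\gamma$, the valuation $\tau$ is attained only by elements whose tails lie in $\Con_A=\langle t^\gamma\rangle$, hence $t^\tau\in A$. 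Second, $\mm_{\ol A}\cdot t^\tau\subseteq A$: a generator of $\mm_{\ol A}$ has valuation $\ge e_i$ in some branch, so a monomial summand $t^\beta$ of $m\in\mm_{\ol A}$ contributes $t^{\beta+\tau}$ with $\beta\ne\bzero$; since $\gamma_i\le\tau_i+1\le\beta_i+\tau_i$ where $\beta_i\ge1$, and $\beta_j+\tau_j\ge\tau_j\ge\gamma_j-1$, one checks $\beta+\tau\in\Con_A$-range, i.e.\ $\beta+\tau\ge\gamma$ in every coordinate where $\beta$ is positive and $\ge\tau$ elsewhere, which by Remark~\ref{rmk conductor} and multigradedness places $K^s\cdot t^{\beta+\tau}\subseteq\Con_A\subseteq A$. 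Summing over the finitely many monomials and using completeness gives $\mm_{\ol A}\cdot t^\tau\subseteq\Con_A\subseteq A$.

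I expect the main obstacle to be the bookkeeping with the multi-index inequalities — in particular checking carefully that $\beta+\tau$ really lands in the "conductor region" $\gamma+\NN^s$ (or that $K^s\cdot t^{\beta+\tau}\subseteq\Con_A$) for every monomial $t^\beta$ appearing in an element of $\mm_{\ol A}$, and separately pinning down $t^\tau\in A$ itself (which is the boundary case where $\beta=\bzero$ and one genuinely uses $\tau\in\Gamma_A$ plus the asymmetry of $\Gamma_A$, not just $\Con_A$). Once those two facts are in place, the conclusion $a\,t^\tau\cdot\mm_A\subseteq A$ for all $a\in\ol A$ is immediate, and $\ol A\cdot t^\tau\subseteq\End_A(\mm_A)=A'$ follows. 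A cleaner alternative I would also consider: observe $\ol A\cdot t^\tau$ is a fractional ideal with $\Gamma_{\ol A\cdot t^\tau}=\tau+\NN^s\setminus(\text{finite set})$, compare with $\Gamma_{A'}$ using Lemma~\ref{lem gammaAprime} and Lemma~\ref{lem equal}; but the direct multiplication argument above seems more transparent and avoids computing $\Gamma_{A'}$ precisely.
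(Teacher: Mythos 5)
Your core argument is essentially the paper's: everything reduces to the containment $(\ol{A}\cdot t^{\tau})\cdot\mm_A\subseteq A$, obtained from $\mm_A\subseteq\mm_{\ol{A}}=\ol{A}\cdot t^{\mathbf{1}}$ together with $\ol{A}\cdot t^{\mathbf{1}}\cdot t^{\gamma-\mathbf{1}}=\ol{A}\cdot t^{\gamma}=\Con_A\subseteq A$, followed by $\End_A(\mm_A)=\Hom_A(\mm_A,A)$. For that last step the relevant reference is Lemma~\ref{lem End eq dual}, not Lemma~\ref{lem hom fractional ideal}. Your monomial-by-monomial bookkeeping for $\mm_{\ol{A}}\cdot t^{\tau}\subseteq\Con_A$ is more laborious than necessary --- every component of an element of $\mm_{\ol{A}}$ is divisible by the corresponding $t_i$, so the product with $t^{\tau}$ lands in $\langle t^{\gamma}\rangle$ outright --- but it is correct.

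The one genuine error is the auxiliary claim $t^{\tau}\in A$, which is false in general and whose attempted justification cannot work: by Lemma~\ref{cor delgado}.\eqref{del1}, $\Delta(\tau)\cap\Gamma_A=\emptyset$, and for $s=1$ this says precisely $\tau\notin\Gamma_A$, i.e.\ $t^{\tau}\notin A$ whenever $A$ is singular. The paper points this out explicitly in the $A_{2k}$ case; indeed, the whole point of the lemma is that $A'$ acquires $t^{\tau}$, which $A$ lacks. Fortunately your chain of containments never uses $t^{\tau}\in A$ (only $\mm_{\ol{A}}\cdot t^{\tau}\subseteq A$), so the proof survives once you delete that claim. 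Also note that your proposed ``cleaner alternative'' via Lemmas~\ref{lem gammaAprime} and \ref{lem equal} would not work as stated: Lemma~\ref{lem equal} presupposes the inclusion you are trying to prove, and Lemma~\ref{lem gammaAprime} bounds $\Gamma_{A'}$ from above rather than below.
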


\begin{proof}
We have
\[
(\ol{A}\cdot t^{\tau})\cdot\mm_A\subseteq(\ol{A}\cdot t^{\gamma-\mathbf{1}})\cdot(\ol{A}\cdot t^{\mathbf{1}})
=\ol{A}\cdot t^{\gamma}=\Con_A\subseteq A,
\]
so the result follows from Lemma~\ref{lem End eq dual}.
\end{proof}

In the following, $A=K[[x,y]]=K[[X,Y]]/\left\langle f\right\rangle $ will be a plane algebroid curve with a singularity of type $ADE$.

\begin{proposition}\label{prop: A-type}
If $A$ is of type $A_{n}$ then $A'$ is of type $A_{n-2}$. 
In particular, $n(A)=\left\lfloor\frac{n+1}{2}\right\rfloor $.
\end{proposition}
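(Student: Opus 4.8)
The plan is to compute the semigroup $\Gamma_{A'}$ explicitly from $\Gamma_A$ for an $A_n$ singularity, deduce the normal form of $A'$, and then set up the resulting recursion for $n(A)$. First I would recall the two cases for the plane curve singularity $A_n$, given by $f = Y^2 - X^{n+1}$. For $n$ even the curve is irreducible ($s=1$), with $\ol{A}\cong K[[t]]$ via $x\mapsto t^2$, $y\mapsto t^{n+1}$, so $\Gamma_A$ is the numerical semigroup generated by $2$ and $n+1$, and $\gamma = n$, $\tau = n-1$. For $n$ odd the curve has two branches ($s=2$), $f = (Y-X^{(n+1)/2})(Y+X^{(n+1)/2})$ after a coordinate change, each branch smooth, and $\Gamma_A\subseteq\NN^2$ is the graph-type semigroup $\{(a,a)\mid a<(n+1)/2\}\cup\{(a,b)\mid a,b\geq (n+1)/2\}$ with $\gamma = ((n+1)/2,(n+1)/2)$.

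Next I would identify $A' = \End_A(\mm_A) = \mm_A^{-1}$ as a fractional ideal. Using Lemma~\ref{lem gammaAprime}, $\Gamma_{A'}\subseteq\bigcap_{m\in\mm_A}(\Gamma_A - \nu(m))$, and since $\mm_A$ is generated (as a semigroup-relevant set) by the elements of minimal valuation, the binding constraints come from $\nu = 1$ in the irreducible case, giving $\Gamma_{A'}\subseteq \Gamma_A - 1$; combined with $\ol{A}\cdot t^\tau\subseteq A'$ from Lemma~\ref{lem delta tau} and the multigraded structure, one pins down $\Gamma_{A'}$ exactly. For $A_n$ with $n$ even, $\Gamma_A = \langle 2, n+1\rangle$ forces $\Gamma_{A'} = \langle 2, n-1\rangle$ (subtract the generator $n+1$, pick up $n-1$; the even part is unchanged), which is precisely $\Gamma_{A_{n-2}}$; by Lemma~\ref{lem equal} and the fact that a one-branch Gorenstein curve is determined up to isomorphism by its value semigroup together with the embedding dimension (and $A'$ is again a plane curve — here I would invoke that $\End$ of the maximal ideal of a plane curve singularity keeps it planar, or check the equations directly: $A'$ is generated by $1$ and $t^{n-1}\cdot(\ldots)$ over... ), we get $A'\cong K[[u,v]]/\langle v^2 - u^{n-1}\rangle$, i.e. type $A_{n-2}$. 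The odd case is the analogous computation with the two-branch graph semigroup: subtracting $(1,1)$ shifts the diagonal run down by one, yielding the $A_{n-2}$ (again two-branch when $n-2$ is odd, and for $n=1$ the two smooth branches meeting transversally have $A'$ regular, consistent with $n(A_1)=1$, and for $n=2$ the cusp has $A'\cong K[[t]]$ regular, $n(A_2)=1 = \lfloor 3/2\rfloor$).

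Finally I would close the recursion. Having shown $A'$ is of type $A_{n-2}$ (with the convention $A_0$ = node, $A_{-1}$ = smooth point, both yielding a regular or one-further-step ring so that $n(A_0)=1$, $n(A_{-1})=0$), and since for a non-normal algebroid curve $A$ the unique test ideal is $\mm_A$ (Remark~\ref{rem:unique-test-ideal}) so that $A_1 = A'$ in the chain defining $n(A)$, we get $n(A_n) = 1 + n(A_{n-2})$ for $n\geq 2$, with base cases $n(A_0) = n(A_1) = 1$ and $n(A_{-1}) = n(\text{regular}) = 0$. Unwinding, $n(A_n) = \lceil (n+1)/2\rceil$ when... wait — solving $n(A_n) = 1 + n(A_{n-2})$ with $n(A_1)=1$ gives $n(A_{2k+1}) = k+1$ and with $n(A_0)=1$ gives $n(A_{2k})=k+1$ for $k\geq 0$ — hmm, that gives $n(A_2)=2$, not $1$. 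I should double-check the base: the cusp $A_2$ has $A' = \ol{A}$ regular in one step, so $n(A_2)=1$, forcing $n(A_0)=0$, i.e. the node's $A'$ is already normal. Then $n(A_{2k}) = k$ and $n(A_{2k+1}) = k+1$, uniformly $\lfloor (n+1)/2\rfloor$. The main obstacle, and the step I would write most carefully, is the passage from "$\Gamma_{A'} = \Gamma_{A_{n-2}}$" to "$A'\cong A_{n-2}$ as a ring": Lemma~\ref{lem equal} only gives equality of fractional ideals with the same semigroup inside a fixed $\ol{A}$, so I must produce explicit generators of $A'$ as a $K$-algebra (e.g. $A' = A + K\cdot t^{n-1}$-type element in $\ol A$, scaled appropriately) and exhibit the isomorphism with $K[[X,Y]]/\langle Y^2 - X^{n-1}\rangle$ by matching the two generators and one relation — tracking the coordinate change in the reducible case, where $A'$ sits in $K[[t_1]]\times K[[t_2]]$, is the fiddliest part.
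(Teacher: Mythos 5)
Your strategy coincides with the paper's: compute $\Gamma_A$ explicitly, bound $\Gamma_{A'}$ from above via Lemma~\ref{lem gammaAprime} and from below via $\ol{A}\cdot t^{\tau}\subseteq A'$ (Lemma~\ref{lem delta tau}), conclude $A'=A+K\cdot t^{\tau}$ by Lemma~\ref{lem equal}, read off the presentation $K[[x',y]]\cong K[[X,Y]]/\langle X^2-Y^{n-1}\rangle$, and iterate. The one step that fails as literally written is your upper bound in the unibranch case: $\mm_A$ contains no element of valuation $1$ (the curve is singular), so there is no ``binding constraint from $\nu=1$,'' and the set $(\Gamma_A-1)\cap\NN$ contains every odd number $\geq 1$, which is far too large to force $\Gamma_{A'}=\langle 2,n-1\rangle$. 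The constraint you actually need comes from the element of minimal order, $\nu(y)=2$ in coordinates with $y\mapsto t^2$: then $(\Gamma_A-2)\cap\NN=\Gamma_A\cup\{\tau\}$, which together with $t^{\tau}\in A'$ pins down $\Gamma_{A'}$ exactly. (In the two-branch case the binding valuation is indeed $\nu(y)=(1,1)$, which is perhaps the source of the mix-up.) Your base-case bookkeeping also wobbles --- you declare $A_0$ to be the node and then derive $n(A_0)=0$, which would say the node is normal; with the standard indexing $A_0=V(Y^2-X)$ is smooth and the node is $A_1$ with $n(A_1)=1$ --- but the recursion $n(A_n)=1+n(A_{n-2})$ you end up with, giving $n(A_{2k})=k$ and $n(A_{2k+1})=k+1$, is correct. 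Finally, the issue you rightly flag about passing from equality of semigroups to a ring isomorphism is resolved in the paper exactly as you propose: exhibit the generator $x'=t^{\tau}$ (resp.\ $x'=(t_1^{k-1},-t_2^{k-1})$ in the reducible case) and verify the single relation $(x')^2=y^{n-1}$; in the reducible case the paper additionally invokes $\dim_K(A'/A)=1$ from the Gorenstein property (Remark~\ref{rem Aprime}) to see that $A'=A+K\cdot x'$.
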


\begin{proof}
We may assume that $f=X^2-Y^{n+1}$.

\begin{compactenum}

\item \label{proof-A-1} Suppose $n=2k$ is even. 
Then $f$ is irreducible, $A\rightarrow K[[t]]=\ol{A}$, $x\mapsto t^{n+1}$, $y\mapsto t^2$ is the normalization, and $\gamma=2k$. 
In accordance with Lemma~\ref{cor delgado}.\eqref{del1}, $t^{\tau}\notin A$. 
On the other hand, by Lemma~\ref{lem delta tau}, $t^{\tau}\in A'$, and by Lemma~\ref{lem gammaAprime}, $\Gamma_{A'}\subseteq\Gamma_A\cup\left\{\tau\right\}=\Gamma_{A+K\cdot t^{\tau}}$. 
Hence, by Lemma~\ref{lem equal},
\[
A'=A+K\cdot t^{\tau}=K[[x',y]]\cong K[[X,Y]]/\left\langle X^2-Y^{n-1}\right\rangle,
\]
where $x'=t^{\tau}$. 
See Figure~\ref{fig A4}.
\begin{figure}[h]
\begin{center}
\includegraphics[
height=0.6in,
width=2.8in
]
{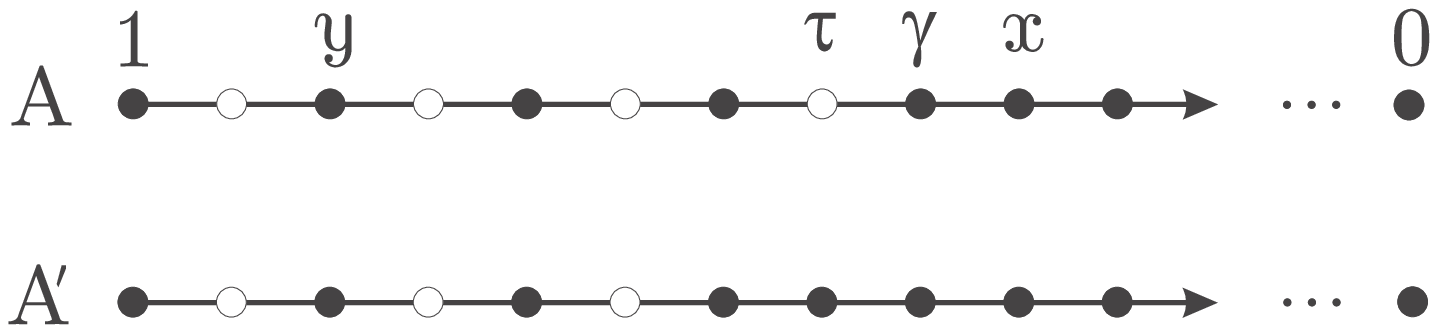}
\caption{Normalization steps for a singularity of type $A_{8}$.}
\label{fig A4}
\end{center}
\end{figure}

\item\label{proof-A-2} Suppose $n=2k-1$ is odd. 
Then there are two branches,
\[
f=(X-Y^k)\cdot(X+Y^k),
\]
and $A\rightarrow K[[t_1]]\times K[[t_2]]=\ol{A}$, $x\mapsto(t_1^k,-t_2^k)$, 
$y\mapsto(t_1,t_2)$ is the normalization. 

The properties of the conductor discussed in Remark~\ref{rmk conductor} 
allow us to determine $\gamma$: 
First, $x=(t_1^k,-t_2^k)\in\Con_A=\langle t^{\gamma}\rangle$ since, for all $i,j\geq 0$,
\[  
(2t_1^{k+i},-2t_2^{k+j}) = y^i\left(  x+y^k\right)  +y^j\left(  x-y^k\right)\in A;
\]
then $\gamma=(k,k)$ since $\Con_A$ is multigraded and 
$(t_1^{k-1},0)\notin A$ and $(0,t_2^{k-1})\notin A$. 

Considering the powers $y^j\in A$, $j=0,\dots, k-1$, we deduce from 
Theorem~\ref{thm delgado} that $\Gamma_A$ is as shown in Figure~\ref{fig A7} (cf.~Lemma~\ref{cor delgado} and Remark~\ref{rem: coordinate-and-tau-axes} for $\tau =\nu(y^{k-1})$ and $\bzero=\nu(y^0)$, respectively).

Applying Lemmas~\ref{lem delta tau}, \ref{lem gammaAprime}, and
\ref{lem equal} as in part \ref{proof-A-1} of the proof, we get 
$\Con_{A'}=\langle t^{\gamma'}\rangle$, where $\gamma'=\tau=(k-1,k-1)$, and
\[
A'=A+\ol{A}\cdot t^{\tau}.
\]
See again Figure~\ref{fig A7}.
Setting $x'=(t_1^{k-1},-t_2^{k-1})$, we have $x'\in\Con_{A'}\setminus A$ by Remark~\ref{rmk conductor}.
As $\dim_{K}(A'/A)=1$ by Remark~\ref{rem Aprime}, it follows that
\[
A'=A+K\cdot x'
=K[[x',y]]\cong K[[X,Y]]/\left\langle X^2-Y^{n-1}\right\rangle.
\]
\begin{figure}[h]
\begin{center}
\includegraphics[
height=2in,
width=5in
]
{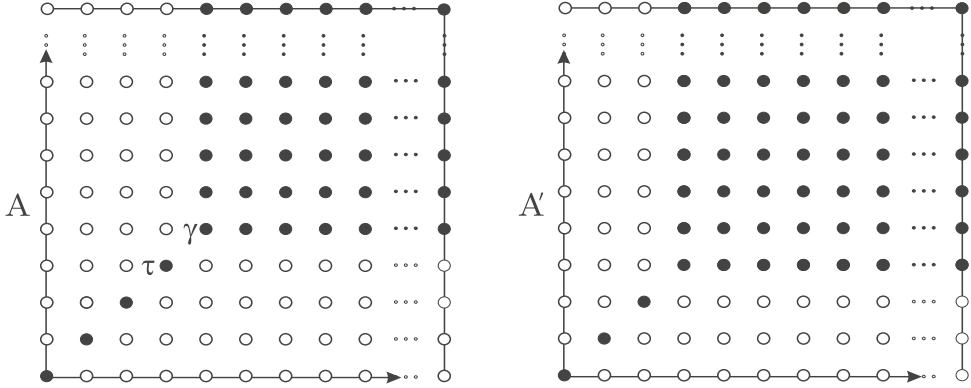}
\caption{Normalization steps for a singularity of type $A_{7}$.}
\label{fig A7}
\end{center}
\end{figure}
\end{compactenum}
\end{proof}

\begin{proposition}\label{prop: D-type}
Suppose $A$ is of type $D_{n}$, where $n\geq4$. 
Then $A'$ is a simple, non-Gorenstein space curve singularity of type $A_{n-3}\vee L$, that is,
a transversal union of an $A_{n-3}$-singularity and a line $L$ as in \cite[Tab.~2a]{Kru99}. 
Moreover, $A''$ is the disjoint union of an $A_{n-5}$-singularity and a line for $n\geq6$, 
and it is smooth for $n=4,5$. In particular, $n(A)=\left\lfloor \frac{n}{2}\right\rfloor $.
\end{proposition}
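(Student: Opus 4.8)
The plan is to mirror the proof of Proposition~\ref{prop: A-type}, but now working with the normalization of a $D_n$-singularity, which has three branches. First I would fix the standard equation $f=X(Y^2-X^{n-2})$ for $D_n$ (so $f=X(Y-X^{(n-2)/2})(Y+X^{(n-2)/2})$ when $n$ is even, with a slightly different factorization of the two non-linear branches when $n$ is odd, but in all cases three branches: one line and two branches meeting the line transversally). I would compute the normalization $\ol A\cong K[[t_1]]\times K[[t_2]]\times K[[t_3]]$ explicitly, with $x\mapsto(0,t_2^2,t_3^2)$ (or similar, placing the line as the first branch) and $y\mapsto(t_1,\pm t_2^{?},\dots)$, then read off $\Gamma_A$. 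As in the $A_n$ case, the powers $y^j$ and the elements $x y^j$ give enough monomials in $A$; together with Theorem~\ref{thm delgado} (Delgado's symmetry) and Lemma~\ref{cor delgado}, this pins down $\Gamma_A$ and the conductor exponent $\gamma$.

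Next I would identify $A'=\End_A(\mm_A)=\mm_A^{-1}$. By Lemma~\ref{lem delta tau} we have $\ol A\cdot t^\tau\subseteq A'$, and by Lemma~\ref{lem gammaAprime} we have $\Gamma_{A'}\subseteq\bigcap_{m\in\mm_A}(\Gamma_A-\nu(m))$; intersecting over the generators $x,y$ of $\mm_A$ gives an explicit upper bound for $\Gamma_{A'}$, which I expect to match $\Gamma_{A+\ol A\cdot t^\tau}$ plus possibly one extra value, exactly as in the $A_n$ computation. Applying Lemma~\ref{lem equal} then identifies $A'$ as a fractional ideal, and a change of coordinates exhibits it as $K[[X,Y,Z]]/I$ for the ideal defining the space curve $A_{n-3}\vee L$ in Frühbis-Krüger's list \cite{Kru99}, Tab.~2a. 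To see it is non-Gorenstein I would use Remark~\ref{rem Aprime}: one checks $\dim_K(A'/A)>1$ already for the first step, or more directly that Delgado's symmetry condition fails for $\Gamma_{A'}$ (the non-Gorenstein property is also visible from the normal form, since a transversal union of a plane curve and a line is never Gorenstein unless the plane curve is smooth).

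Then I would iterate once more to get $A''$. Since $A'$ is the transversal union $A_{n-3}\vee L$, its singular locus is a single point whose local ring is again of this shape; the key observation is that taking the endomorphism ring of the maximal ideal detaches the line from the $A_{n-3}$-branch (the conductor already separates them after one step), so $A''$ becomes the disjoint union of $\ol L=K[[t_1]]$ (smooth) and the result of one normalization step on $A_{n-3}$, which by Proposition~\ref{prop: A-type} is $A_{n-5}$ (when $n\ge 6$), or smooth when $n-3\in\{1,2\}$, i.e. $n=4,5$. Finally, counting steps: from $A$ (type $D_n$) we reach $A'$ ($A_{n-3}\vee L$) in one step, $A''$ (disjoint $A_{n-5}\sqcup L$, or smooth) in two steps, and then by Proposition~\ref{prop: A-type} the $A_{n-5}$-component needs $n(A_{n-5})=\lfloor (n-4)/2\rfloor$ further steps, for a total of $2+\lfloor(n-4)/2\rfloor=\lfloor n/2\rfloor$; one checks the small cases $n=4,5$ directly against the formula. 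The invariance of this count under completion and its independence from the choice of test ideal is guaranteed by Corollary~\ref{cor max}, and the fact that the intermediate singularity types are genuinely those claimed (rather than merely abstractly isomorphic rings) follows from the explicit normal forms produced along the way.

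The main obstacle I anticipate is the bookkeeping for the three-branch semigroup $\Gamma_A\subseteq\NN^3$ and its behaviour under the partial order, especially getting Delgado's symmetry condition to cleanly cut out $\Gamma_A$ from the finitely many ``obvious'' elements $\nu(x^iy^j)$; the even/odd split in $n$ (which changes how the two non-linear branches are parametrized, and hence the exact shape of $\Gamma_A$) will force two parallel computations, as it did in Proposition~\ref{prop: A-type}. A secondary subtlety is verifying that the endomorphism-ring step really does split off the line at the second stage and does not, say, produce a connected space curve singularity again; this is where the structure of the conductor of $A'$ (which I would compute from $\Gamma_{A'}$) does the work, since once the line sits in its own factor of $\ol{A'}$ with conductor exponent zero there, it must already be a direct summand of $A''$.
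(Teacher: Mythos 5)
Your skeleton is the same as the paper's: parametrize the branches, pin down $\Gamma_A$ and the conductor via Delgado's symmetry (Theorem~\ref{thm delgado}, Lemma~\ref{cor delgado}), sandwich $A'$ between $A+\ol A\cdot t^{\tau}$ (Lemma~\ref{lem delta tau}) and the bound from Lemma~\ref{lem gammaAprime}, conclude with Lemma~\ref{lem equal}, exhibit $A'$ by a determinantal normal form from \cite[Tab.~2a]{Kru99}, observe that the second step splits off the line so that $A''=K[[t_1]]\times(A_{n-5}\text{-singularity})$, and count $2+\lfloor(n-4)/2\rfloor=\lfloor n/2\rfloor$. However, your setup contains a genuine error: a $D_n$-singularity does \emph{not} have three branches for all $n$. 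The factor $Y^2-X^{n-2}$ is irreducible when $n$ is odd, so $D_n$ has two branches (the line plus one $A_{n-3}$-cusp branch, with $x\mapsto(0,t_2^2)$, $y\mapsto(t_1,t_2^{n-2})$) for odd $n$, and three branches only for even $n$ (where $x\mapsto(0,t_2,t_3)$, not $(0,t_2^2,t_3^2)$, since the two non-linear branches are smooth). This is not cosmetic: the ambient $\NN^s$ for $\Gamma_A$, the shape of $\Delta(\tau-\alpha)$ in Delgado's criterion, the conductor exponent, and the presentation of $I'$ all depend on $s$, so the even/odd dichotomy forces two structurally different computations (two-branch versus three-branch), not merely ``a slightly different factorization.''

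A second slip: to show $A'$ is non-Gorenstein you propose checking $\dim_K(A'/A)>1$, but by Remark~\ref{rem Aprime} this quantity equals $1$ precisely \emph{because} $A$ is a plane curve, hence Gorenstein; the relevant test for $A'$ is $\dim_K(A''/A')>1$ (which does hold here, as $A''\supseteq A'+\ol A\cdot(1,0,0)+\ol A\cdot t^{\gamma''}$), or equivalently the failure of Delgado symmetry for $\Gamma_{A'}$, or simply the identification with a type listed among the non-Gorenstein simple space curve singularities. With these two corrections the argument goes through exactly as in the paper.
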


\begin{proof}
We may assume that $f=X\left(Y^2-X^{n-2}\right)$.
\begin{compactenum}
\item\label{proof-D-1} Suppose $n=2k+3\geq5$ is odd. 
Then $A\rightarrow K[[t_1]]\times K[[t_2]]=\ol{A}$, $x\mapsto\left(0,t_2^2\right)$, $y\mapsto(t_1,t_2^{2k+1})$ is the normalization.

\begin{compactenum}
\item[($A$)] 
Considering the elements $x^jy^i=(0,t_2^{i(2k+1)+2j})\in A$ for $i=0,1$ and $j\geq1$, and $y^i(y^2-x^{2k+1})=(t_1^{2+i},0)\in A$ for $i\geq0$, 
we deduce from Remarks~\ref{rmk infinity} and \ref{rmk conductor} that $(t_1^2,$ $t_2^{2k+2})\in\Con_A=\left\langle t^{\gamma}\right\rangle$. 
Then $\gamma=(2,2k+2)$ by Remark~\ref{rmk conductor} since $(t_1,0)\notin A$ and $(0,t_2^{2k+1})\notin A$. 
Hence, $\tau=(1,2k+1)=\nu(y)$ and $\Gamma_A\cap(\tau+\NN^2)$ is of the form shown in Figure~\ref{fig d7} by Lemma \ref{cor delgado}.\eqref{del1}.

\begin{figure}[h]
\begin{center}
\includegraphics[
height=6.3238in,
width=2.1087in
]
{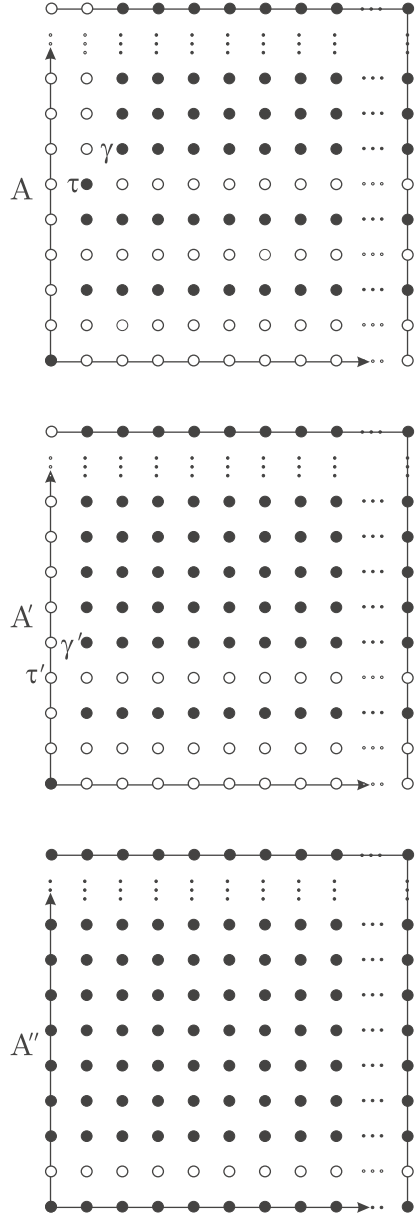}
\caption{Normalization steps for a singularity of type $D_{7}$.}
\label{fig d7}
\end{center}
\end{figure}

Next, for $i\geq1$ and $j=1,\dots,k$, considering the elements $x^j+y^i=(t_1^i,t_2^{2j}+t_2^{i(n-2)})\in A$, we get $\left(i,2j\right)\in\Gamma_A$. 
Since $y\equiv(t_1,0)\mod\ol{A}\cdot(0,t_2^{2k+1})$, we conclude from Remark \ref{rem: coordinate-and-tau-axes} that $\Gamma_A$ is as depicted in Figure~\ref{fig d7}.

\item[($A'$)] To determine $A'$, we argue as in part~\ref{proof-A-2} 
of the proof of the previous Proposition~\ref{prop: A-type}. 
This gives $\Con_{A'}=\bigl\langle{t^{\gamma'}}\bigr\rangle$, where $\gamma'=(1,2k)$, and
\[
A'=A+K\cdot z=K[[x,y,z]]\cong K[[X,Y,Z]]/I',
\]
where $z=(t_1,-t_2^{2k+1})$ and
\[
I'=\ideal{X,Y-Z}\cap\ideal{Y+Z,Z^2-X^{n-2}}.
\]
Indeed, for the latter, note that $x,y,z$ satisfy the relations given by the generators of $I'$, and that $I'$ is a radical ideal with the right number of components of the right codimension. 
The coordinate change $(X,Y,Z)\mapsto(X,Y+Z,-Y+Z)$ turns $I'$ into the ideal of maximal minors of the matrix
\[
\begin{pmatrix}
Z & Y & X^{n-3}\\
0 & X & Y
\end{pmatrix}.
\]
Hence, $A'$ is of the claimed type. 
Note that $(t_1,0)=(y+z)/2\in A'$ and $(0,t_2^{2j})\in A'$, $j=1,\dots,k-1$, and hence $\mm_{A'}$ is multigraded by Remark~\ref{rmk infinity}.

\item[($A''$)]
By Lemma \ref{lem delta tau}, $\ol{A}\cdot t^{\tau'}\subseteq A''$.
In particular, $(1,0)\in A''$. 
Applying again Remark~\ref{rmk infinity}, we see that $\sum_{j=1}^{k-1}K^2\cdot(1,t_2^{2j})\subseteq A''$. 
Furthermore, by the very definition of $A''=\End_{A'}(\mm_{A'})\subset\overline{A}$, and since $\mm_{A'}$ is multigraded, $\sum_{i=0}^{\infty}K^2\cdot(t_1^i,1)\subseteq A''$.
It follows that $A''$ is multigraded, with $\Con_{A''}=\bigl\langle t^{\gamma''}\bigr\rangle$, where $\gamma''=(0,2k-2)$, and with $\Gamma_{A''}$ as in Figure~\ref{fig d7}. 
We conclude that $A''$ admits a product decomposition
\[
A''=K[[t_1]]\times K[[x',y']],
\]
where $x'=t_2^2$ and $y'=t_2^{2k-1}$. 
Moreover,
\[
K[[x',y']]\cong K[[X,Y]]/\left\langle Y^2-X^{2k-1}\right\rangle
\]
is of type $A_{n-5}$ for $n\geq7$, and it is smooth for $n=5$.
\end{compactenum}

\item Suppose $n=2k+2$ is even. 
Then $f=X\left(Y^2-X^{n-2}\right)=X(Y-X^k)(Y+X^k)$ and $A\rightarrow K[[t_1]]\times K[[t_2]]\times K[[t_3]]=\ol{A}$, $x\mapsto\left(  0,t_2,t_3\right)$, $y\mapsto(t_1,t_2^k,-t_3^k)$ is the normalization.

\begin{compactenum}
\item[($A$)] For $j=1,\dots,k-1$ and $i\geq1$, we have $x^j+y^i  =(t_1^i,t_2^j+t_2^{ki},t_3^j+(-1)^it_3^{ki})$.
Since $y\equiv(t_1,0,0)\mod\ol{A}\cdot(0,t_2^k,t_3^k)$, it follows that
\[
\left\{\alpha\in\Gamma_A\mid\alpha_2<k,\ \alpha_3<k\right\}
=\{\bzero\}\cup\bigcup_{i\geq1,\ j=1,\dots,k-1}(i,j,j). 
\]

Next, set
\begin{align*}
a&=y-x^k=(t_1,0,-2t_3^k),\\
b&=y+x^k=(t_1,2t_2^k,0).
\end{align*}
Then, for $j\geq0$, we have
\begin{gather*}
y^2-x^{2k}=(t_1^2,0,0),\\
x^{j+1}\cdot a=(0,0,-2t_3^{k+j+1}),\\
x^{j+1}\cdot b=(0,2t_2^{k+j+1},0).
\end{gather*}

Hence, $(t_1^2,t_2^{k+1},t_3^{k+1})\in\Con_A=\left\langle t^{\gamma}\right\rangle$ by Remarks~\ref{rmk infinity} and \ref{rmk conductor}.
An easy argument using parts~\eqref{del3} and \eqref{del2} of Lemma~\ref{cor delgado} shows that $\gamma=(2,k+1,k+1)$.
We conclude from Theorem~\ref{thm delgado} that $\Gamma_A$ is of the form shown in Figure~\ref{fig D10}.

\begin{figure}[h]
\begin{center}
\includegraphics[
height=4.699in,
width=4.6849in
]
{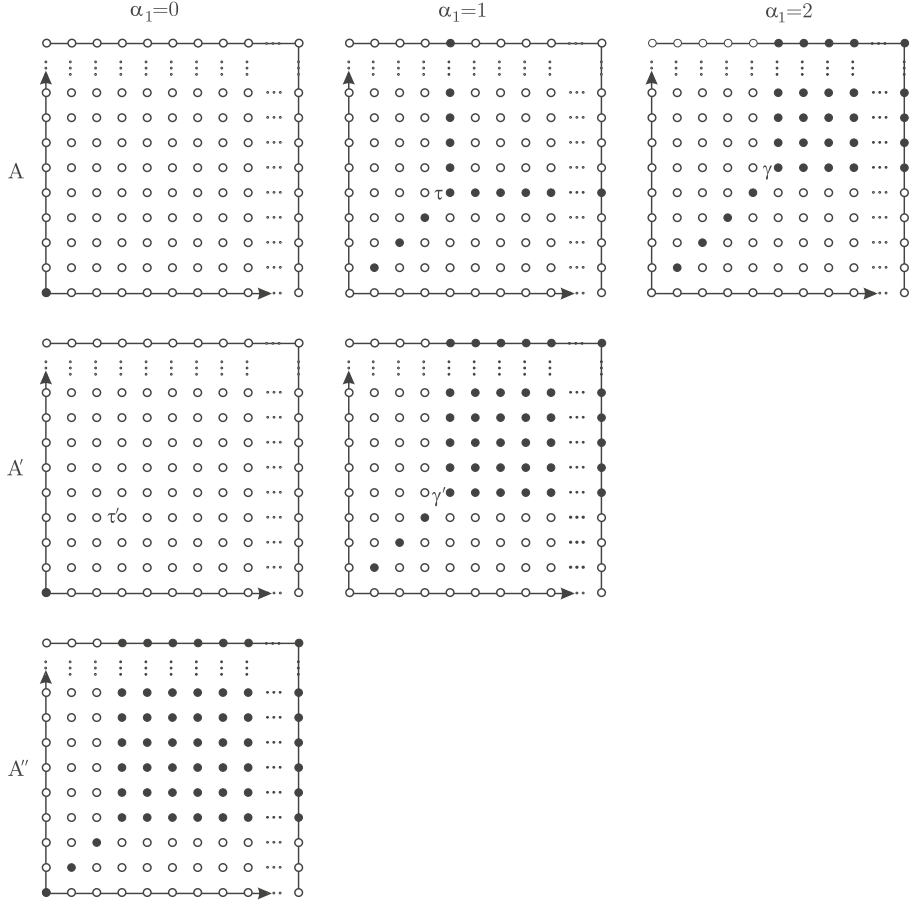}
\caption{Normalization steps for a singularity of type $D_{10}$.}
\label{fig D10}
\end{center}
\end{figure}

\item[($A'$)] Determining $A'$ as before, we get $\Con_{A'}=\bigl\langle t^{\gamma'}\bigr\rangle$, where $\gamma'=\tau=(1,k,k)$, and 
\[
A'=A+K\cdot z=K[[x,y,z]]\cong K[[X,Y,Z]]/I',
\]
where $z=(0,t_2^k,-t_3^k)$ and
\[
I'=\left\langle X,Z\right\rangle \cap\left\langle Y-Z,Z+X^k\right\rangle\cap\left\langle Y-Z,Z-Y^k\right\rangle.
\]
Indeed, $x,y,z$ satisfy the relations given by the generators of $I'$, and $I'$ is a radical 
ideal with the right number of components of the right codimension.
The coordinate change $(X,Y,Z)\mapsto(X,Y-Z,Y)$ then leads to the claimed ideal of 
minors (see part \ref{proof-D-1} of the proof).  

\item[($A''$)] Multiplying $(1,0,0)$ with the generators $x,y,z$ of $\mm_{A'}$, we get
either zero or $(1,0,0)\cdot y = (t_1,0,0)=y-z \in \mm_{A'}$. 
Hence, $(1,0,0)\in A''=\End_{A'}(\mm_{A'})\subset\overline{A}$. 
It follows from Lemmas~\ref{lem delta tau} and \ref{lem equal} that 
\[
A''=A'+\ol{A}\cdot(1,0,0)+\ol{A}\cdot t^{\gamma''},
\]
where $\gamma''=\tau'=(0,k-1,k-1)$ (for the inclusion from left to right, use again that $A''=\End_{A'}(\mm_{A'})$). 
Using once more Lemma~\ref{lem equal},
and that $(1,0,0)\in A''$, we see that $A''$ admits a product decomposition
\[
A''=K[[t_1]]\times K[[x',y']],
\]
where $x'=(t_2,t_3)$ and $y'=(t_2^{k-1},-t_3^{k-1})$. Moreover,
\[
K[[x',y']]\cong K[[X,Y]]/\left\langle Y^2-X^{2k-2}\right\rangle
\]
is of type $A_{n-5}$ for $n\geq6$, and it is smooth for $n=4$.
\end{compactenum}
\end{compactenum}
\end{proof}

\begin{proposition}
Suppose $A$ is of type $E_{n}$, where $n=6,7,8$. 
Then $A'$ is a simple, non-Gorenstein space curve singularity of type $E_{n}(1)$, see \cite[Tab.~2a]{Kru99}.
Moreover, $A''$ is of type $A_{n-6}$ for $n=7,8$, and it is smooth for $n=6$.
In particular, $n(A)=\left\lfloor\frac{n-1}{2}\right\rfloor$.
\end{proposition}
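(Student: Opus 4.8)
The plan is to follow closely the blueprint established in Propositions~\ref{prop: A-type} and \ref{prop: D-type} for the $ADE$ cases, treating the three exceptional curves $E_6$, $E_7$, $E_8$ one at a time. For each $n\in\{6,7,8\}$ I would start from a normal form of $f$ (for instance $f=Y^3-X^4$ for $E_6$, $f=Y^3-X^3Y$ or $f=Y(Y^2-X^3)$ for $E_7$, and $f=Y^3-X^5$ for $E_8$), compute the normalization $\ol A\cong\prod K[[t_i]]$ explicitly, and write down the branches together with the images of $x$ and $y$. For the irreducible cases ($E_6$ and $E_8$) there is a single branch with $\ol A=K[[t]]$, $x\mapsto t^3$, $y\mapsto t^4$ (resp. $t^5$); for $E_7=Y(Y^2-X^3)$ there are two branches. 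In each case I would then read off the value semigroup $\Gamma_A$ by listing the orders of a spanning set of monomials in $x,y$ — e.g. for $E_8$ the numerical semigroup $\langle 3,5\rangle=\{0,3,5,6,8,9,10,\dots\}$ with conductor $\gamma=8$, $\tau=7$ — and in the reducible case confirm the shape of $\Gamma_A$ via Delgado's symmetry (Theorem~\ref{thm delgado}) and Lemma~\ref{cor delgado}.

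Next, to compute $A'=\End_A(\mm_A)$ I would apply the same three lemmas used throughout Section~\ref{sec4}: Lemma~\ref{lem delta tau} gives $\ol A\cdot t^\tau\subseteq A'$, Lemma~\ref{lem gammaAprime} bounds $\Gamma_{A'}$ from above by $\bigcap_{m\in\mm_A}(\Gamma_A-\nu(m))$, and Lemma~\ref{lem equal} pins $A'$ down once these two bounds agree. Since each $E_n$-curve is a plane curve, hence Gorenstein with $\dim_K(A'/A)=1$ by Remark~\ref{rem Aprime}, $A'$ is obtained from $A$ by adjoining a single element; I expect $A'=A+K\cdot t^\tau$ (one branch) or $A+K\cdot x'$ with $x'=t^{\tau}$ componentwise (two branches), exactly as in Propositions~\ref{prop: A-type} and \ref{prop: D-type}. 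Then I would exhibit generators $x,y,z$ of $\mm_{A'}$, write $A'\cong K[[X,Y,Z]]/I'$ for an explicit radical ideal $I'$, and — by a linear coordinate change — identify $I'$ with the ideal of $2\times 2$ minors of the appropriate $2\times 3$ matrix from Fr\"uhbis-Kr\"uger's Table~2a so as to recognize the type $E_n(1)$. As before, checking that the listed relations generate a radical ideal with the correct number of components of the correct codimension suffices to identify $A'$ up to isomorphism.

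For the last iteration $A''=\End_{A'}(\mm_{A'})$ I would first check (as in the $D_n$ argument) that $\mm_{A'}$ is multigraded using Remark~\ref{rmk infinity}, then multiply the unit-order idempotent-like elements against the generators of $\mm_{A'}$ to show $A''\supseteq A'+\ol A\cdot(\text{unit vectors})+\ol A\cdot t^{\tau'}$, and use Lemmas~\ref{lem delta tau} and \ref{lem equal} for the reverse inclusion. This should yield a product decomposition of $A''$ (for the reducible cases) or directly a plane-curve quotient, identifying $A''$ as type $A_{n-6}$ for $n=7,8$ (so $A_1$ for $E_7$, $A_2$ for $E_8$) and as smooth for $n=6$. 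Finally, $n(A)$ is computed by counting steps: $A\subsetneqq A'\subsetneqq A''\subsetneqq\cdots\subsetneqq\ol A$; since $A''$ is already smooth for $E_6$ we get $n=2=\lfloor 5/2\rfloor$, for $E_7$ the $A_1$ needs one more step giving $n=3=\lfloor 6/2\rfloor$, and for $E_8$ the $A_2$ also needs one more step giving $n=3=\lfloor 7/2\rfloor$; these agree with $\lfloor (n-1)/2\rfloor$ after invoking Proposition~\ref{prop: A-type} for the residual $A_{n-6}$-singularity.

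The main obstacle I anticipate is the explicit identification of the intermediate ideal $I'$ with the matrix normal form from \cite{Kru99}: getting the right generators of $\mm_{A'}$ as a subring of $\ol A$, finding the linear coordinate change that converts the intersection of prime ideals into a determinantal ideal, and verifying radicality and the component/codimension count. The semigroup bookkeeping for $E_7$ (two branches, so working in $\NN^2$ with Delgado symmetry) is also somewhat delicate, though entirely analogous to the odd $A_n$ and odd $D_n$ cases already carried out. Everything else — computing $\Gamma_A$, applying Lemmas~\ref{lem delta tau}, \ref{lem gammaAprime}, \ref{lem equal}, and reading off the step count — is routine given the machinery developed above.
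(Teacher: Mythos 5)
Your plan follows the paper's proof essentially verbatim: the same normal forms (up to swapping $X$ and $Y$), the same explicit computation of $\ol{A}$, $\Gamma_A$ and the conductor, the same use of Lemmas~\ref{lem delta tau}, \ref{lem gammaAprime} and \ref{lem equal} to pin down $A'$, the same determinantal identification with Fr\"uhbis-Kr\"uger's Table~2a, and the same step count via the residual $A_{n-6}$-singularity. The only slip is your expected formula $A'=A+K\cdot t^{\tau}$ in the reducible $E_7$ case: there $\tau=(2,4)=\nu(y^2)$ already lies in $\Gamma_A$ (consistent with Lemma~\ref{cor delgado}.\eqref{del2}), so the new generator must be taken from $\ol{A}\cdot t^{\tau}$ supported on a single branch --- namely $z=(t_1^2,0)$ in the paper's coordinates --- which the lemma-based machinery you invoke would in any case produce.
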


\begin{proof}
\begin{compactenum}

\item Let $n=6$. 
We may assume $f=X^3-Y^{4}$. Then $A\rightarrow K[[t]]=\ol{A}$, $x\mapsto t^{4}$, $y\mapsto t^3$ is the normalization, and we have
\[
A=\left\langle 1,t^3,t^{4}\right\rangle _{K}\oplus K[t]\cdot t^{6}.
\]
Arguing as in the previous proofs, we get
\[
A'=A+K\cdot z,
\]
where $z=\tau=t^{5}$. 
Then
\[
A'=K[[x,y,z]]\cong K[[X,Y,Z]]/I',
\]
where $I'$ is the ideal of minors of the matrix
\[
\begin{pmatrix}
Y & Z & X^2\\
X & Y & Z
\end{pmatrix}
.
\]
This means that $A'$ is of type $E_{6}(1)$. 
Since $A'=K\cdot1\oplus K[t]\cdot t^3=K\cdot1\oplus \mm_{A'}$, we obtain that $A''=\End_{A'}(\mm_{A'})=K[t]$. See Figure~\ref{fig e6}.

\begin{figure}[h]
\begin{center}
\includegraphics[
height=1.3in,
width=3.4in
]
{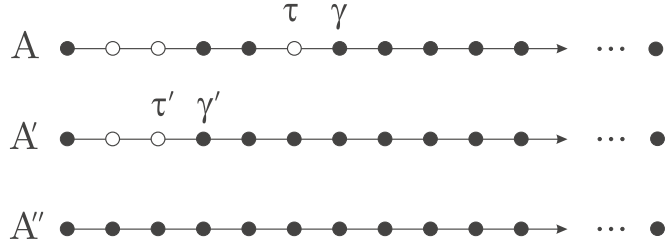}
\caption{Normalization steps for a singularity of type $E_{6}$.}
\label{fig e6}
\end{center}
\end{figure}

\item Let $n=7$. 
We may assume $f=X(X^2-Y^3)$. 
Then $A\rightarrow K[[t_1,t_2]]=\ol{A}$, $x\mapsto(0,t_2^3)$, $y\mapsto(t_1,t_2^2)$ is the normalization. 

Since $x^iy^j=(0,t_2^{3i+2j})\in A$ for $i\geq1$ and $j\geq 0$, and $(y^3-x^2)y^i=(t_1^{3+i},0)\in A$ for $i\geq0$, we have $(t_1^3,t_2^{5})\in\Con_A=\left\langle t^{\gamma}\right\rangle$ by Remarks~\ref{rmk infinity} and \ref{rmk conductor}. 
Then $\gamma=(3,5)$ by Remark~\ref{rmk conductor} since $(t_1^2,0)\notin A$ and $(0,t_2^{4})\notin A$. 

Considering the elements $x+y^i=(t_1^i,t_2^3+t_2^{2+i})\in A$ for $i\geq2$ and $y^2\in A$, parts~\eqref{del1} and \eqref{del2} of Lemma~\ref{cor delgado} imply that $\Gamma_A\cap((2,3)+\NN^2)$ is of the form shown in Figure~\ref{fig e7}. 
\begin{figure}[h]
\begin{center}
\includegraphics[
height=6.3985in,
width=2.1278in
]
{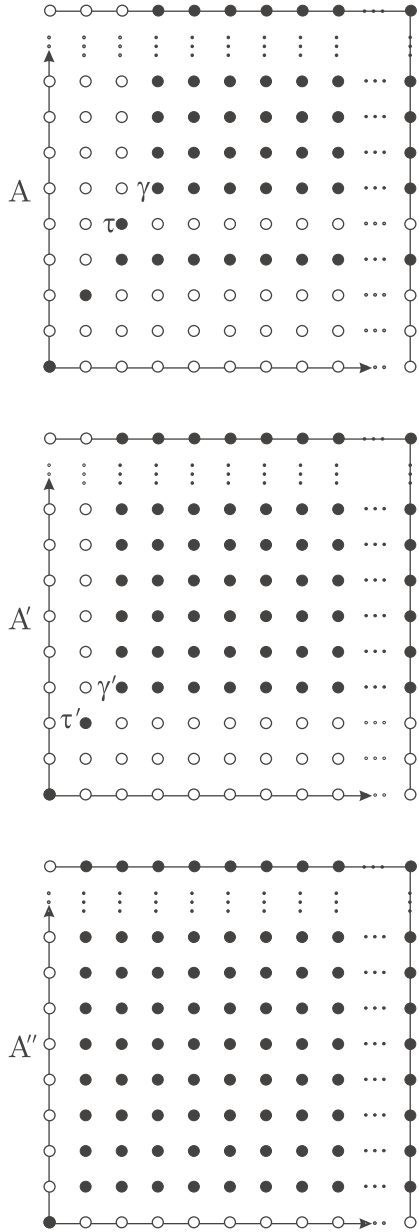}
\caption{Normalization steps for a singularity of type $E_{7}$.}
\label{fig e7}
\end{center}
\end{figure}
Using Theorem~\ref{thm delgado} and $y\in A$, we see that $\Gamma_A\cap((1,2)+\NN^2)$ and, hence, $\Gamma_A$ are as in the figure. 

Now, as before, we get
\[
A'=\left\langle 1,y\right\rangle_{K}\oplus\ol{A}\cdot(t_1^2,t_2^3) =A+K\cdot z,
\]
where $z=(t_1^2,0)$ (see again Figure~\ref{fig e7}). 
So
\[
A'=K[[x,y,z]]\cong K[[X,Y,Z]]/I',
\]
where
\[
I'=\left\langle XZ,\text{ }Y^2Z-Z^2,\text{ }Y^3-X^2-YZ\right\rangle.
\]
After the coordinate change $(X,Y,Z)\mapsto(Y,-X,Z+X^2)$, this becomes the ideal of maximal minors of the matrix
\[
\begin{pmatrix}
Z+X^2 & Y & X\\
0 & Z & Y
\end{pmatrix}
.
\]
It follows that $A'$ is of type $E_{7}(1)$. 
Since $A'=\left\langle 1,y\right\rangle_{K}\oplus\ol{A}\cdot(t_1^2,t_2^3)$, we must have $A''=K\cdot1\oplus\ol{A}\cdot(t_1,t_2)$, so that $A''$ is of type $A_1$.

\item Let $n=8$. 
We may assume $f=X^3-Y^{5}$. 
Then $A\rightarrow K[[t]]=\ol{A}$, $x\mapsto t^{5}$, $y\mapsto t^3$ 
is the normalization, and we have
\[
A=\left\langle 1,t^3,t^{5},t^{6}\right\rangle _{K}\oplus K[t]\cdot t^{8}.
\]
As before,
\[
A'=A+K\cdot z,
\]
where $z=\tau=t^{7}$. 
Then
\[
A'=K[[x,y,z]]\cong K[[X,Y,Z]]/I',
\]
where $I'$ is the ideal of maximal minors of the matrix
\[
\begin{pmatrix}
X & Z & Y^3\\
Y & X & Z
\end{pmatrix}
.
\]
This means that $A'$ is of type $E_{8}(1)$. 
Since $A'=\left\langle 1,t^3\right\rangle_{K}\oplus K[t]\cdot t^{5}$, it follows that $A''=K\cdot1\oplus K[t]\cdot t^2$. 
See Figure~\ref{fig e8}.
\begin{figure}[h]
\begin{center}
\includegraphics[
height=1.2777in,
width=3.452in
]
{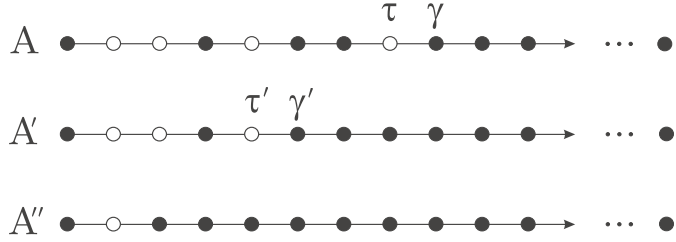}
\caption{Normalization steps for a singularity of type $E_{8}$.}
\label{fig e8}
\end{center}
\end{figure}

\end{compactenum}
\end{proof}

%%%%%%%%%%%%%%%%%%%%%%%%%%%%%%%%%%%%%%%%%%%%%%%%%%%%%%%%%%%%%%%%%%%%%%%%%%%%%%%
\bibliographystyle{amsalpha}
\bibliography{nor}
%%%%%%%%%%%%%%%%%%%%%%%%%%%%%%%%%%%%%%%%%%%%%%%%%%%%%%%%%%%%%%%%%%%%%%%%%%%%%%%%
\end{document}